\newcommand{\insieme}[1]{\left\{ #1 \right\}}
\definecolor{pingreen}{rgb}{0,39,14}
\crefname{section}{§}{§§}
\Crefname{section}{§}{§§}
\def\vol{\mathrm{vol}}
\def\Acal{\mathcal{A}}
\def\rhs{r.h.s.\xspace}
\def\st{s.t.\xspace}
\DeclareMathOperator*{\argmin}{arg\,min}
\newtheorem*{rep@theorem}{\rep@title}
\newcommand{\newreptheorem}[2]{%
\newenvironment{rep#1}[1]{%
 \def\rep@title{#2~\ref{##1}}%
 \begin{rep@theorem}}%
 {\end{rep@theorem}}}
\newtheorem{theorem}{Theorem}[section]
\newtheorem{lemma}[theorem]{Lemma}
\newtheorem{definition}[theorem]{Definition}
\newtheorem{remark}[theorem]{Remark}
\newtheorem{proposition}[theorem]{Proposition}
\newtheorem{corollary}[theorem]{Corollary}
\newcommand{\R}{\mathbb{R}}
\newcommand{\Z}{\mathbb{Z}}
\newcommand{\Fcal}{\mathcal{F}}
\def\N{\mathbb N}
\def\eps{\varepsilon}
\def\per{\mathrm{Per}}
\def\eps{\varepsilon}
\def\d {\,\mathrm {d}}
\def\dx{\,\mathrm {d}x}
\def\dz{\,\mathrm {d}z}
\def\ds{\,\mathrm {d}s}
\def\du{\,\mathrm {d}u}
\def\dv{\,\mathrm {d}v}
\def\dt{\,\mathrm {d}t}
\def\FtL{\mathcal F_{\tau,L}}
\numberwithin{equation}{section}
\author[1]{Sara Daneri\thanks{sara.daneri@gssi.it}}
\author[2]{Eris Runa\thanks{eris.runa@gmail.com}}
\affil[1]{Gran Sasso Science Institute, L'Aquila }
\affil[2]{Deutsche Bank, London}
  \title{Periodic striped configurations in the large volume limit}
\date{}
\begin{document}

\maketitle

\begin{abstract}
	We show striped pattern formation in the large volume limit  for a class of generalized antiferromagnetic local/nonlocal interaction functionals in general dimension previously considered in \cite{gr,dr_arma,dr_vol} and in \cite{giuliani06_ising_model_with_long_range,gs_cmp} in the discrete setting. In such a model the relative strength between  the short range attractive term favouring pure phases and the long range repulsive term favouring oscillations is modulated by a parameter $\tau$. {For $\tau<0$ minimizers are trivial uniform states. It is conjectured that $\forall\,d\geq2$ there exists $0<\bar{\tau}\ll1$ such that for all $0<\tau\leq\bar{\tau}$ and for all $L>0$ minimizers are striped/lamellar patterns. In \cite{dr_arma} the authors prove the above for $L=2kh^*_\tau$, where $k\in\N$ and $h^*_\tau$ is the optimal period of stripes for a given $0<\tau\leq\bar{\tau}$. 
	The purpose of this paper is to show the validity of the conjecture for generic $L$. }
	%In   
	%this paper we prove that, for all $\tau$ smaller than some $0<\bar\tau\ll1$ and for all $L$ greater than some $\bar L\gg1$, minimizers on periodic boxes of size $L$ are periodic stripes. In particular, pattern formation holds in the large volume limit independently of the specific  size of the box, thus extending the results in \cite{dr_arma}. 
\end{abstract}

%---{{{sec:introduction
\section{Introduction}
\label{sec:introduction}

In this paper we consider the following class of functionals.

For  $d\geq1$, $L>0$, $\tau>0$, $p\geq d+2$, $\beta=p-d-1$, $E\subset\R^d$ $[0,L)^d$-periodic set, $Q_L=[0,L)^d$, define
\begin{equation}
   \label{eq:ftauel}
\mathcal F_{\tau,L}(E)=\frac{1}{L^d}\Big(-\per_1(E,Q_L)+\int_{\R^d} K_\tau(\zeta) \Big[\int_{\partial E \cap Q_L} \sum_{i=1}^d|\nu^E_i(x)| |\zeta_i|\d\mathcal H^{d-1}(x)-\int_{Q_L}|\chi_E(x)-\chi_E(x+\zeta)|\dx\Big]\d\zeta\Big),
\end{equation}
where  $\per_1(E,Q_L)=\int_{\partial E\cap Q_L}\|\nu^E(x)\|_1\d\mathcal H^{d-1}(x)$ is the $1$-perimeter of the set $E$ in the cube $Q_L$,  defined through the $1$-norm $\|z\|_1=\sum_{i=1}^d|z_i|$, and  $K_\tau(\zeta)=\tau^{-p/\beta}K_1(\zeta\tau^{-1/\beta})$, where $K_1(\zeta)=\frac{1}{(\|\zeta\|_1+1)^p}$.

The functional~\eqref{eq:ftauel} is obtained by suitably rescaling the local/nonlocal interaction functional
\begin{equation}
	\label{eq:ef}
	\bar\Fcal_{J,L}(E)=J\per_1(E,[0,L)^d)-\int_{\R^d}\int_{Q_L}|\chi_E(x)-\chi_E(x+\zeta)|K_1(\zeta)\dx\d\zeta,
\end{equation}
where $J=J_c-\tau$ and $J_c=\int_{\R^d}|\zeta_i|K_1(\zeta)\d\zeta$ is a critical constant such that for $J>J_c$ minimizers of $\bar\Fcal_{J,L}$ are trivial (i.e. $E=\emptyset$ or $E=\R^d$). 

While for $\tau<0$ minimizers are trivial, when $\tau=J_c-J$ is positive and small the competition between the short range attractive term of perimeter type and the long range repulsive term with power law interaction kernel in~\eqref{eq:ef} causes the breaking of symmetry w.r.t. coordinate permutations and  global minimizers are expected to be one-dimensional and periodic. Since the optimal period among periodic one-dimensional sets is of the order $\tau^{-1/\beta}$, and the optimal energy scales like $\tau^{(p-d)/\beta}$, in order to see striped patterns in a fixed box as $\tau\to0$ it is convenient to rescale the functional \eqref{eq:ef} in such a way that optimal stripes have width and energy of order $O(1)$, thus getting \eqref{eq:ftauel}.  

Showing symmetry breaking and pattern formation in more than one  space dimensions for local/nonlocal models retaining symmetry w.r.t. some rotational group turns out to be a challenging problem,  which up to now has been proved only in a few cases (\cite{gs_cmp,dr_arma,dr_siam,dkr,dr_therm,dr_vol}) for functionals retaining symmetry w.r.t. coordinate permutations and symmetric domains.

Defining the family of one-dimensional sets of period $L$ as 
\begin{equation}\label{eq:cl}
\mathcal C_L=\{E\subset\R^d:\,\text{ up to coordinate permuations } E=\hat E\times \R^{d-1} \text{ with }\hat E\subset\R\quad  \text{$L$-periodic}\}
\end{equation} and denoting the minimal energy attained by the functional in \eqref{eq:ftauel} on such sets as $L$ varies as follows 
\begin{equation}\label{eq:einfty}
	e_{\infty,\tau}=\inf_L\inf_{E\in \mathcal C_L}\Fcal_{\tau,L}(E),
\end{equation}
in~\cite{dr_arma} the authors proved that there exists $\hat{\tau}>0$ such that for every $0<\tau\leq\hat\tau$  the minimal energy $e_{\infty,\tau}$ is attained on periodic stripes with density $1/2$ and period $2h^*_\tau$ for a unique $h^*_\tau>0$ (the existence of a possibly non unique optimal period had been previously shown in \cite{chen2005periodicity, muller1993singular,ren2003energy,giuliani06_ising_model_with_long_range}). By periodic stripes with density $1/2$ and period $2h$ (simply called periodic unions of stripes of period $2h$ in~\cite{dr_arma}) we mean here sets which, up to permutations of coordinates and translations, are of the form
\begin{equation}\label{eq:stripes12}
	E=\bigcup_{k\in\Z}[2kh, (2k+1)h)\times\R^{d-1}.
\end{equation} 
Indeed, periodic stripes of density $1/2$ and period $2h^*_\tau$ not only minimize $\Fcal_{\tau, L}$ among one-dimensional sets in the class $\mathcal C_L$ when $L=2kh^*_\tau$,  but they turn out to be the unique global minimizers of $\Fcal_{\tau,L}$ among all $[0,L)^d$-periodic locally finite  perimeter sets in $\R^d$. 

More precisely, in \cite{dr_arma} the following result is proved

\begin{theorem}\label{T:1.3}
	Let $d\geq1$, $p\geq d+2$ and $h^{*}_{\tau}$ be the optimal stripes' width for fixed $\tau$. 
	Then there exists $\check \tau>0$, such that for every $0<\tau< \check \tau$, one has that for every $k\in \N$ and  $L = 2k h_{\tau}^{*}$,   the minimizers $E_{\tau}$ of $\FtL$ are optimal stripes of period $2h_{\tau}^{*}$ and density $1/2$. 
\end{theorem}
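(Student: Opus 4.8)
\textit{Proof proposal.} The plan is to establish the sharp lower bound $\mathcal F_{\tau,L}(E)\ge e_{\infty,\tau}$ for every $[0,L)^d$-periodic locally finite perimeter set $E$ when $L=2kh^*_\tau$, together with a description of the equality case; the reverse inequality is immediate, since for such $L$ the periodic stripe of period $2h^*_\tau$ and density $1/2$ is admissible and realizes $e_{\infty,\tau}$. Existence of a minimizer is standard (direct method): using Fubini and the invariance of $K_\tau$ under coordinate permutations one rewrites
\begin{gather*}
L^d\mathcal F_{\tau,L}(E)=(J_\tau-1)\,\per_1(E,Q_L)-V_\tau(E),\\
J_\tau:=\int_{\R^d}|\zeta_1|K_\tau(\zeta)\d\zeta,\qquad V_\tau(E):=\int_{\R^d}K_\tau(\zeta)\int_{Q_L}|\chi_E(x)-\chi_E(x+\zeta)|\dx\d\zeta,
\end{gather*}
and since $p\ge d+2$ forces $J_\tau=\tau^{-1}J_c<\infty$ with $J_\tau>1$ for $\tau$ small, the functional is coercive in the $1$-perimeter and lower semicontinuous.

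First I would slice in the coordinate directions. Writing $P_i(E)=\int_{\partial^*E\cap Q_L}|\nu^E_i|\d\mathcal H^{d-1}$ so that $\per_1(E,Q_L)=\sum_iP_i(E)$, and using the telescoping inequality $|\chi_E(x)-\chi_E(x+\zeta)|\le\sum_i|\chi_E(x+\zeta^{(i-1)})-\chi_E(x+\zeta^{(i)})|$ with $\zeta^{(i)}=(\zeta_1,\dots,\zeta_i,0,\dots,0)$, together with periodicity and Fubini, one reaches the \emph{exact} identity
\[
L^d\mathcal F_{\tau,L}(E)=\sum_{i=1}^d\int_{[0,L)^{d-1}}\mathcal G^{(1)}_{\tau,L}\big(E^{x^\perp}_i\big)\d x^\perp+R(E),
\]
where $E^{x^\perp}_i$ is the one-dimensional slice of $E$ in direction $e_i$, where $\mathcal G^{(1)}_{\tau,L}(u)=(J_\tau-1)\,\#(\partial u\cap[0,L))-\int_\R\hat K_\tau(t)\int_0^L|\chi_u(s)-\chi_u(s+t)|\ds\dt$ with $\hat K_\tau(t)=\int_{\R^{d-1}}K_\tau(t,\eta)\d\eta$, and where $R(E)\ge0$ is the remainder produced by the telescoping, which is zero only when $E$ is one-dimensional (i.e.\ in some $\mathcal C_L$). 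One checks that $\tfrac1L\mathcal G^{(1)}_{\tau,L}$ is exactly $\mathcal F_{\tau,L}$ restricted to one-dimensional sets, so by the one-dimensional analysis of \cite{dr_arma} one has $\min_u\mathcal G^{(1)}_{\tau,L}(u)=L\,e_{\infty,\tau}$ whenever $L=2kh^*_\tau$, attained only by the period-$2h^*_\tau$, density-$1/2$ stripe (up to translation), while $\mathcal G^{(1)}_{\tau,L}(\emptyset)=\mathcal G^{(1)}_{\tau,L}(\R)=0$.

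The crux is to improve the naive consequence of the identity — every slice term is $\ge L^de_{\infty,\tau}$ and $R(E)\ge0$, giving only $\mathcal F_{\tau,L}(E)\ge d\,e_{\infty,\tau}$, which is too weak because $e_{\infty,\tau}<0$ — to the sharp bound $\mathcal F_{\tau,L}(E)\ge e_{\infty,\tau}$. The point is that the slice sum and the remainder cannot both be near their minima: were all $d$ families of slices close to optimal stripes, $E$ would be close to a periodic stripe in every coordinate direction simultaneously, which is impossible for $d\ge2$. I would make this quantitative through two estimates: \emph{(i)} one-dimensional stability with a rate, $\mathcal G^{(1)}_{\tau,L}(u)-Le_{\infty,\tau}\gtrsim\eta(u)$, where $\eta(u)$ controls the $L^1$-distance of $u$ from the translates of the optimal stripe — in particular $\mathcal G^{(1)}_{\tau,L}(u)\gtrsim L|e_{\infty,\tau}|$ when $u$ is trivial or has the wrong density or period — which follows from strict convexity of the one-dimensional cell energy in the interface positions (the convexity underlying the uniqueness of $h^*_\tau$ in \cite{dr_arma}); and \emph{(ii)} a dimensional-rigidity bound $R(E)\gtrsim\mu(E)$, where $\mu(E)$ measures the failure of $\nu^E$ to be $\mathcal H^{d-1}$-a.e.\ parallel to a single fixed axis, coming from the equality case of the telescoping (the axis-parallel path from $x$ to $x+\zeta$ must never leave and re-enter $E$). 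Localizing the energy on a grid of cubes of side comparable to $h^*_\tau$ and arguing cube by cube — on the cubes where $\partial E$ is essentially flat the slices in the remaining $d-1$ directions are essentially trivial and contribute $\gtrsim 0$ rather than $\gtrsim Le_{\infty,\tau}$, while non-flat cubes are paid for by \emph{(ii)} — then yields $\mathcal F_{\tau,L}(E)\ge e_{\infty,\tau}$ for $\tau\le\check\tau$, with equality forcing $R(E)=0$ and every slice in the active direction to be the optimal stripe.

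Finally, combining the equality case with the one-dimensional classification of \cite{dr_arma} valid for $L=2kh^*_\tau$, any minimizer $E_\tau$ is, up to a coordinate permutation and a translation, the periodic stripe of period $2h^*_\tau$ and density $1/2$; one takes $\check\tau=\min(\hat\tau,\tau_0)$ with $\tau_0$ small enough that $J_\tau>1$ and that the balancing of \emph{(i)} against \emph{(ii)} in the crux step goes through. The main obstacle is that crux step: quantifying the incompatibility of one-dimensionality along several directions at once, and dovetailing the one-dimensional stability rate with the remainder estimate at the correct length scale. This is where the $\ell^1$ structure of both the perimeter and the kernel $K_1(\zeta)=(\|\zeta\|_1+1)^{-p}$ is essential, and it is the reason striped pattern formation in dimension $\ge2$ has been established only for this class of functionals.
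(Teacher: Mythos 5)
Your setup is sound up to and including the observation that the naive slicing bound gives only $d\,e_{\infty,\tau}$, and the identity $L^d\Fcal_{\tau,L}(E)=\sum_i\int\Gcal^{(1)}_{\tau,L}(E_{x_i^\perp})\,dx_i^\perp+R(E)$ with $R\ge0$ is the same telescoping decomposition (see \eqref{eq:gstr1}--\eqref{eq:decomposition}) that the proof in \cite{dr_arma} starts from. The difficulty you flag as the crux is the right one. However, the two ingredients you propose to resolve it are not the right shape, and some essential machinery is missing.

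The claim that on cubes where $E$ is essentially flat ``the slices in the remaining $d-1$ directions are essentially trivial and contribute $\gtrsim0$'' does not follow from the one-dimensional functional alone, and this is the key gap. A slice that is $L^1$-close to a constant can still have a few boundary points at very small mutual distance, and on such a slice the one-dimensional energy $\Gcal^{(1)}_{\tau,L}$ (equivalently the sum of the $r_{i,\tau}$-terms) is very negative, not nonnegative; this is exactly what Lemma~\ref{rmk:stimax1} quantifies. The reason these slices do not ruin the lower bound is not the global remainder $R(E)$, but a pointwise cancellation: whenever $E$ is locally close to stripes orthogonal to $e_j$, each boundary point of an $e_i$-slice ($i\ne j$) contributes $r_{i,\tau}+v_{i,\tau}\ge0$, where $v_{i,\tau}$ is the local piece of $R$ attached to that boundary point. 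This is Proposition~\ref{lemma:stimaContributoVariazionePiccola} (Local Stability, Lemma 7.8 in \cite{dr_arma}), and it cannot be replaced by a global bound of the form $R(E)\gtrsim\mu(E)$: a set consisting of vertical stripes in half the box and horizontal stripes in the other half has $R(E)\sim L^{d-1}$ (only the interface sees the cross term) while any reasonable $\mu(E)$ measuring deviation from a single axis is $\sim L^d$, so your inequality (ii) fails. The compensation must happen locally, boundary point by boundary point, not after integrating.

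Your ingredient (i) is also not what is used, and as stated it is false: strict convexity of $h\mapsto e_\tau(h)$ gives a \emph{quadratic} rate $e_\tau(h)-e_\tau(h^*_\tau)\gtrsim|h-h^*_\tau|^2$ near the optimum, which is not $\gtrsim\eta(u)$ (linear in $L^1$-distance) for small period deviations. What \cite{dr_arma} actually uses is the one-dimensional optimization bound $\sum_{s\in\partial E\cap I}r_\tau(E,s)\ge|I|\,e_{\infty,\tau}-C_0$ (the analogue of Lemma~\ref{lemma:1D-optimization} here, but with $e_{\infty,\tau}$ in place of $e_\tau(h_\tau(I))$, which is legitimate precisely because $L=2kh^*_\tau$), with a fixed additive error $C_0$; no $L^1$-stability rate is needed. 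Related to this, there is no local rigidity step in your proposal: to know that a cube is ``essentially flat in direction $e_j$'' in the first place, one needs the $\Gamma$-convergence/compactness statement of Proposition~\ref{lemma:local_rigidity_alpha}, which converts a bound on the localized energy $\bar F_\tau(E,Q_l(z))$ into smallness of $D_\eta(E,Q_l(z))$. This requires the mesoscopic cube side $l$ to be large compared to $h^*_\tau$ (so that the limiting set can be identified as a union of stripes), not comparable to $h^*_\tau$ as you propose.

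Finally, the bookkeeping that turns these local inputs into the sharp bound is entirely missing. The $-C_0$ error in the one-dimensional optimization is paid per maximal interval on which a slice is stripe-like, so it accumulates; it is compensated because the transition regions $A_0$ (far from stripes in every direction, hence energy density $>M$ by local rigidity) and $A_{-1}$ (close to stripes in two directions, hence nearly full/empty with energy density $\gtrsim0$ by Lemma~\ref{lemma:stimaQuasiPieno}) have a definite thickness forced by Lipschitz continuity of $z\mapsto D_\eta(E,Q_l(z))$, and the parameters $l,\delta,\rho,M,\eta,\tau$ are chosen (as in \Cref{Ss:param}) so that $M\rho$ dominates $C_0+C_1l$. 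Your ``grid of cubes'' sketch does not address how this balance is achieved, and without it the argument cannot close.
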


Hence, for every $L=2kh^*_\tau$ and $0<\tau\leq\check{\tau}$ with $\check{\tau}$ independent of $k$, minimizers of $\Fcal_{\tau, L}$ are, up to translations and permutation of coordinates, of the form~\eqref{eq:stripes12}. 

A  crucial point in the proof of Theorem \ref{T:1.3} was that the large volume limit structure of minimizers (namely for a fixed $0<\tau\ll1$  and then letting $L\to+\infty$) had to be performed on boxes whose sizes $L$ are even multiples of the optimal width $h^*_\tau$. On these boxes the energy of minimizers reaches the minimal value obtainable by periodic stripes. This allowed in \cite{dr_arma} to bound from below the energy of the set $E$ along one-dimensional slices in the different coordinate directions with the minimal energy density for periodic sets $e_{\infty,\tau}$. 

{An important question, especially for the applications,  that is left open in \cite{dr_arma}, is whether one can prove the above result for  boxes of arbitrary size $L$, not necessarily compatible with the optimal period. Indeed, in most applications the box size is predetermined by external factors. }

In this paper  we give a positive answer to the above question. More precisely, we prove the following

\begin{theorem}
	\label{thm:main}
	Let $d\geq1$, $p\geq d+2$. Then there exist $\bar{\tau}>0$ and $\bar L>0$ such that for all $0<\tau<\bar{\tau}$ and $L>2\bar L$  the $L$-periodic minimizers $E_{\tau}$ of $\FtL$ are optimal stripes of period $2h_{\tau,L}$ and density $1/2$, for some $h_{\tau,L}>0$.
\end{theorem}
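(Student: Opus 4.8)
\emph{Proof strategy.}

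The plan is to reduce the statement to a symmetry-breaking assertion — that \emph{any} minimizer of $\FtL$ belongs to the class $\mathcal C_L$ of one-dimensional $L$-periodic sets of \eqref{eq:cl} — and to then conclude by the one-dimensional analysis already contained in \cite{dr_arma}. One first checks that on $\mathcal C_L$ the functional reduces to a one-dimensional problem: for $E=\hat E\times\R^{d-1}$ with $\hat E\subset\R$ $L$-periodic, both the perimeter term and the nonlocal term factor out the transverse volume $L^{d-1}$, so that (with the $L^{-d}$ normalisation) $\FtL(E)=L^{-1}\mathcal G(\hat E)$, where $\mathcal G$ is built from the same attractive and repulsive terms with $K_\tau$ replaced by its one-dimensional marginal. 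Writing $e^L_\tau:=\inf_{E\in\mathcal C_L}\FtL(E)$, the one-dimensional theory underlying the definition of $e_{\infty,\tau}$ (in particular existence and uniqueness of the optimal width $h^*_\tau$ and the convexity/rigidity of the associated width functional) shows, as a routine consequence of the same convexity, that $e^L_\tau$ is attained precisely by the periodic stripes of density $1/2$ whose common width $h_{\tau,L}:=L/(2k_{\tau,L})$ is, among the widths compatible with the period $L$, the one minimising the smooth period-energy function $g_\tau$; since $g_\tau$ has a minimum at $h^*_\tau$ and $|h_{\tau,L}-h^*_\tau|=O(L^{-1})$ for $L$ large, one moreover gets $e_{\infty,\tau}\le e^L_\tau\le e_{\infty,\tau}+C_\tau L^{-2}$. (For $d=1$ there is nothing further to prove, so assume $d\ge2$; existence of minimizers follows as in \cite{dr_arma}, the a priori perimeter bound coming from $\FtL(E_\tau)\le\FtL(\emptyset)=0$.)

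The heart of the matter is then a quantitative lower bound
\[
\FtL(E)\ \ge\ e^L_\tau+c\,\mathrm{exc}(E)\qquad\text{for every }[0,L)^d\text{-periodic set of locally finite perimeter }E,
\]
valid for all $0<\tau<\bar\tau$ and $L>2\bar L$, where $\mathrm{exc}(E)\ge0$ is an excess measuring the failure of $E$ to be one-dimensional, with $\mathrm{exc}(E)=0\Rightarrow E\in\mathcal C_L$, and $c>0$ depending only on $\tau$ (uniformly for $\tau<\bar\tau$). Granting this, Theorem~\ref{thm:main} follows at once: a minimizer $E_\tau$ satisfies $\FtL(E_\tau)\le e^L_\tau$ by comparison with the optimal stripes above, whence $\mathrm{exc}(E_\tau)=0$, hence $E_\tau\in\mathcal C_L$, and the one-dimensional theory then identifies $E_\tau$ with periodic stripes of period $2h_{\tau,L}$ and density $1/2$.

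To prove the lower bound one revisits the slicing and rigidity estimates of \cite{dr_arma}. After a coarea decomposition, $\FtL(E)$ is bounded below by an average over one-dimensional slices of $E$ in the $d$ coordinate directions, plus nonnegative terms carried by the genuinely non-axis-aligned part of the repulsive interaction; the crucial point — and the place where the present setting differs from the compatible one — is that the one-dimensional energy of \emph{every} $L$-periodic slice is bounded below by $e^L_\tau$, which uses only that a slice is an $L$-periodic one-dimensional configuration and requires no arithmetic relation between $L$ and $h^*_\tau$, while any deviation of $E$ from one-dimensionality makes at least one of the remaining contributions strictly positive, coercively in $\mathrm{exc}(E)$. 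Relative to \cite{dr_arma} the modifications are: (i) every step in which tightness was obtained by letting a full slice coincide with the \emph{optimal} stripes of period $2h^*_\tau$ — possible only when $L\in 2h^*_\tau\N$ — is replaced by comparison with $e^L_\tau$, at the cost of a slack bounded by $e^L_\tau-e_{\infty,\tau}=O(L^{-2})$ times bounded geometric quantities; (ii) this slack, and the finitely many other $L$-dependent error terms, are absorbed into $c\,\mathrm{exc}(E)$ by taking $\bar L$ large; (iii) all constants, and $\bar L$ itself, are taken uniform in $\tau<\bar\tau$, which — thanks to the rescaling in \eqref{eq:ftauel} normalising optimal widths and energies to order one — follows the same $\tau$-bookkeeping as in \cite{dr_arma}.

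The main obstacle is the proof of this lower bound, and inside it the accounting in (i)--(ii): one must verify that each place where \cite{dr_arma} genuinely used $L=2kh^*_\tau$ to turn an inequality into an equality admits a replacement whose loss is controlled by $O(L^{-2})$ (or better), and — more delicately — that this loss is \emph{dominated}, not merely comparable, by the strictly positive excess terms detecting deviation from straight parallel stripes, so that the sandwich $e^L_\tau\le\FtL(E_\tau)\le e^L_\tau$ still forces $\mathrm{exc}(E_\tau)=0$. A secondary but genuine issue is keeping $\bar L$ independent of $\tau$: the coercivity constant $c$ must not degenerate as $\tau\to0$ faster than the error terms, which requires tracking the $\tau$-scaling through the whole chain of estimates.
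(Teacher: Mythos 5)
Your outline captures the broad strategy (reduce to one dimension, control the incompatibility error for large $L$, absorb boundary losses), but it misses the central new difficulty that the paper's argument is built to overcome, and the key step is asserted rather than argued.

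The statement you declare ``crucial'' — that \emph{every} $L$-periodic slice has one-dimensional energy at least $L\,e^L_\tau$, with the deviation from one-dimensionality making the remaining contributions strictly positive, coercively in $\mathrm{exc}(E)$ — is where the gap is. The lower bound by $e^L_\tau$ along a full slice is indeed valid (it is essentially \eqref{eq:gstr28}), but it only applies on slices that stay entirely inside a single region $A_{i,l}$ of one preferred orientation. As soon as the configuration deviates from one-dimensionality the slice is broken into sub-intervals $I^i_\ell$ (coming from the decomposition of $[0,L)^d$ into $A_0\cup A_{-1}\cup(B\setminus B_l)\cup A_{1,l}\cup\dots\cup A_{d,l}$), and the one-dimensional optimization on each sub-interval only controls $\sum_{s\in\partial E\cap I} r_\tau(E,s)$ from below by $|I|\,e_\tau(h_\tau(I))-C_0$, where $h_\tau(I)$ is the optimal width compatible with $|I|$ rather than with $L$. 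Since each $e_\tau(h_\tau(I))$ can drop below $e_\tau(h_{\tau,L})=e^L_\tau$ (indeed down to $e_\tau(h^*_\tau)=e_{\infty,\tau}$), the naive slice bound \emph{loses}, not gains, against the target value $e^L_\tau$: the slack can a priori go the wrong way. Controlling this potential loss — showing $e_\tau(h_\tau(I))$ is within $O(\eps/|I|)$ of $e_\tau(h_{\tau,L})$ uniformly over sub-intervals $I$ with $|I|\ge\bar L$ — is exactly the content of \S3 of the paper (Lemma~\ref{lemma:convint}, Theorems~\ref{T:1d} and~\ref{cor:conv}) and rests on strict convexity of $h\mapsto e_\tau(h)$ near $h^*_\tau$ uniformly in $\tau$. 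Your proposal never invokes such a convexity estimate, and without it the ``excess dominates the slack'' claim has no support.

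There is also a structural mismatch: you posit a quantitative coercivity inequality $\FtL(E)\ge e^L_\tau+c\,\mathrm{exc}(E)$. The paper neither proves nor needs such a bound; it assumes $E$ is a minimizer, decomposes $[0,L)^d$ as above, derives the slice estimate \eqref{eq:eqtoBeshown_integral}, and then runs a counting contradiction: with the parameters fixed as in \S\ref{Ss:param} and using \eqref{eq:claim1} and \eqref{eq:taua}, a nonempty ``bad set'' $|B_l^c|>0$ would force $2\eps C\cdot m\ge\min\{-e_\tau(h^*_\tau)/2,1\}\cdot m$ for some $m>0$, which is false by the choice \eqref{eq:eps}. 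Recasting this as a clean excess inequality would require identifying what $\mathrm{exc}(E)$ is and proving the domination; you flag this as ``the main obstacle'' but offer no mechanism. Finally, your $O(L^{-2})$ slack is in the right spirit (a Taylor expansion at $h^*_\tau$ would give it), but the paper works with the weaker bound $|e_\tau(h_{\tau,L})-e_\tau(h^*_\tau)|\le \eps C/L$ of Theorem~\ref{cor:conv}, which is what the contradiction argument actually uses; neither bound, by itself, resolves the sub-interval issue described above.
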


{\begin{remark}
	The range $0<L\leq2\bar L$ (more in general, $L$ smaller than any fixed constant) is contemplated in \cite{dr_arma}[Theorem 1.2]. Therefore Theorem \ref{thm:main} above completes the proof of the conjecture for all $L>0$. 
\end{remark}

Among the many improvements, there are two main novelties in this paper:
\begin{itemize}
	\item We are able to devise one-dimensional optimization estimates depending on the minimal energy relative to the length of the intervals where such estimates are performed. Instead in \cite{dr_arma} global bounds involving the minimal energy density $e_{\infty,\tau}$ are used;
	\item As a consequence of length-dependent optimization estimates, on some intervals the energy could in principle be smaller the minimal energy density of the box $[0,L)^d$. We show that this is not the case, with a new argument that exploits the strict convexity of the energy density of optimal stripes w.r.t. their period. Such an argument enters in the proof of Theorem \ref{thm:main} in Section  \ref{Ss:main}.
\end{itemize}}

%Aspects of the proof such as the localization of the problem on a mesoscopic scale $l<L$ up to boundary effects, partition of the space into different regions according to the $L^1$-closeness to stripes orthogonal to different coordinate directions in boxes of size $l$, and stability and rigidity estimates at the mesoscopic scale $l$ follow closely those introduced in \cite{dr_arma} and successfully extended in \cite{dr_siam,ker,dr_vol}.

%However, the one-dimensional estimates on slices of the minimizing sets have now to take into account the optimal energy densities of periodic stripes on periods compatible with the lengths of the intervals on which the estimates are performed. In particular, such energies are not individually comparable with the global minimal energy density of the box $[0,L)^d$, which could be large if $L$ is not an even multiple of the optimal period $h^*_\tau$. The strict convexity of certain quantities will allow us to show a fundamental step in the proof, namely that it is not convenient to oscillate among different optimal periods in the regions where the minimizers are close to stripes in a certain direction.   

\subsection{Scientific context} 

Patterns emerge at nanoscale level in several physical/chemical systems. Surprisingly similar patterns among which droplets or stripes/lamellae can be found in different systems, with different types of interactions.  As pointed out in~\cite{sa}, the emergence of periodic regular structures is universally believed to stem from the competition between short range attractive and long range repulsive (SALR) interactions. Though observed in experiments and reproduced by simulations, a rigorous mathematical proof of pattern formation starting from symmetric functionals and domains in more than one space dimensions  is available only in a very few cases. The main difficulties lie in the symmetry breaking phenomenon and in the nonlocality of the interactions.

Below we report a (non-exhaustive) series of contributions on periodic stripes formation for symmetric functionals and domains in suitable regimes. In other regimes of competition between the short-range attractive and long-range repulsive forces with small volume constraints, strong indications of the emergence of patterns consisting of isolated droplets have been provided (see e.g. \cite{muratov2014isoperimetric,goldman2013gamma,goldman2014gamma,cicalese13_dropl_minim_isoper_probl_with}).

The one-dimensional setting is relatively well-understood.  Periodicity of global minimizers is known to hold for convex or reflection positive repulsive kernels (see  \cite{hubbard1978generalized, pokrovsky1978properties,kerimov1999uniqueness,muller1993singular,ren2003energy, chen2005periodicity,giuliani06_ising_model_with_long_range,giuliani08_period_minim_local_mean_field_theor,giuliani2009modulated}). 

In several space dimensions, the first characterization of ground states as periodic stripes was given in~\cite{gs_cmp} for a discrete version of the functional~\eqref{eq:ef} in the range of exponents $p>2d$. In the continuous setting, breaking of symmetry w.r.t. permutation of coordinates for the functional~\eqref{eq:ef} has been shown in~\cite{gr} in the range of exponents $p>2d$. The precise structure of minimizers of \eqref{eq:ef} (namely periodic stripes) in the wider range of exponents $p\geq d+2$ was given in~\cite{dr_arma}.    
In~\cite{ker} such a characterization of minimizers was proved to hold also in a small open range of exponents below $d+2$. We mention that in  physical applications the power law interactions have exponents smaller than or equal to $d+1$. This, together with the increased nonlocality of the problem, makes the problem of lowering the exponent of the kernel particularly interesting.  In \cite{dr_siam} it has been shown that also for repulsive kernels of screened-Coulomb (or Yukawa) type global minimizers are, in a suitable regime, periodic stripes. In~\cite{dkr} the authors consider the diffuse interface version of the functional~\eqref{eq:ef}  in a finite periodic box and prove one-dimensionality and periodicity of minimizers. In~\cite{dr_therm} the results in~\cite{dkr} are proved to hold in the large volume limit on boxes whose sizes are even multiples of an optimal period.  
In \cite{dr_vol} a characterization of minimizers for the functional \eqref{eq:ftauel} under the imposition of an arbitrary volume constraint $\alpha\in(0,1)$ was given. In the regime $0<\tau\ll1$ and when $L\gg1$ is an even multiple of the optimal period $h^*_{\tau,\alpha}$ of simple periodic stripes with density $\alpha$, minimizers among all $[0,L)^d$-periodic sets of density $\alpha$ happen to be given by sets of the form 
\begin{equation}\label{eq:stripesalpha}
	E=\bigcup_{k\in\Z}[2kh^*_{\tau,\alpha}, (2k+2\alpha)h^*_{\tau,\alpha})\times\R^{d-1}.
\end{equation} 
 Thus, even in the low density regime, stripes are the first type of pattern to emerge from the competition between attractive/repulsive forces in the range immediately  below the critical constant $J_c$.
 
 Regarding further fields of interest for pattern formation under attractive/repulsive forces in competition, we mention the following. Evolution problems of gradient flow type related to functionals with attractive/repulsive nonlocal terms in competition, both in presence and in absence of diffusion, are also well studied (see e.g.~\cite{carrillo2014derivation, carrillo2019blob, carrillo2011global, craig2020aggregation, craig2017nonconvex, daneri2020deterministic}). In particular, one would like to show stability of the gradient flows or of their deterministic particle  approximations around configurations which are periodic or close to periodic states.
Another interesting direction would be to extend our rigidity results to non-flat surfaces without interpenetration of matter as investigated for rod and plate theories  in~\cite{kupferman2014riemannian, lewicka2010shell, olbermann2017interpenetration}.

\section{Notation and preliminary results}

Let $d \geq 1$. On $\R^d$ let us denote by $\langle\cdot,\cdot\rangle$ the Euclidean scalar product and by $|\cdot|$ the Euclidean norm. Let $e_1, \dots, e_n$ be the canonical basis on $\R^d$. We will often employ slicing arguments, for this reason we need definitions concerning the $i$-th component. For $x \in \R^d$ let $x_i =\langle x,e_i\rangle $ and $x_i^{\perp} := x - x_ie_i$.  
Let 
$\|x\|_1=\sum_{i=1}^d|x_i|$ be the $1$-norm  and $\|x\|_\infty=\max_i|x_i|$ the $\infty$-norm. 
While writing slicing formulas,
with a slight abuse of notation we will sometimes identify 
$x_i\in[0,L)$  with the point $x_ie_i\in[0,L)^d$
and $\{x_i^{\perp}:\,x\in[0,L)^d\}$ with 
$[0,L)^{d-1}\subset\R^{d-1}$ 
so that  $x_i^{\perp} \in [0,L)^{d-1}$. 

Whenever $\Omega\subset\R^d$ is a measurable set, we denote by $\mathcal H^{d-1}(\Omega)$ its $(d-1)$-dimensional Hausdorff measure and by $|\Omega|$ its Lebesgue measure.

Given a measure $\mu$ on $\R^d$, we denote by $|\mu|$ its total variation. 

We recall that  a set $E\subset\R^d$ is of (locally) finite perimeter if the distributional derivative of its characteristic function $\chi_E$ is a (locally) finite measure. We denote by $\partial E$ the reduced boundary of $E$ and by $\nu^E$ its exterior normal. 

The anisotropic $1$-perimeter of $E$ is given by 
\[
\per_1(E,[0,L)^d):=\int_{\partial E\cap [0,L)^d}\|\nu^E(x)\|_1\d\mathcal H^{d-1}(x)
\]
and, for $i\in\{1,\dots,d\}$ 
\begin{equation}
	\label{eq:perI}
	\per_{1i}(E,[0,L)^d)=\int_{\partial E\cap [0,L)^d}|\nu^E_i(x)|\d\mathcal H^{d-1}(x),
\end{equation}
thus $\per_1(E,[0,L)^d)=\sum_{i=1}^d\per_{1i}(E,[0,L)^d)$. 

For $i\in\{1,\dots,d\}$,  we define the one-dimensional slices of $E\subset\R^d$ in direction $e_i$ by

\[
E_{x_i^\perp}:=\bigl\{s\in[0,L):\,se_i+ x_i^\perp\in E\bigr\}.
\]

Whenever $E$ is a set of locally finite perimeter, for a.e. $x_i^\perp$ its slice $E_{x_i^\perp}$ is a set of locally finite perimeter in $\R$ and the following slicing formula  holds for every $i\in\{1,\dots,d\}$
\[
\per_{1i}(E,[0,L)^d)=\int_{\partial E\cap [0,L)^d}|\nu^E_i(x)|\d\mathcal H^{d-1}(x)=\int_{[0,L)^{d-1}}\per_1(E_{x_i^\perp},[0,L))\dx_i^\perp.
\]

Consider  $E\subset\R$  a set of locally finite perimeter and $s\in\partial E$ a point in the relative boundary of $E$. We will denote by 
\begin{equation}
	\label{eq:s+s-}
	\begin{split}
		s^+ &:= \inf\{ t' \in \partial E, \text{with } t' > s  \} \\ s^- &:= \sup\{ t' \in \partial E, \text{with } t' < s  \}. 
	\end{split}
\end{equation}

We will also apply slicing on small cubes, depending on $l$, around a point. Therefore we introduce the following notation.
For $r> 0$ and $x^{\perp}_i$ we let $Q_{r}^{\perp}(x^\perp_{i}) = \{z^\perp_{i}:\, \|x^{\perp}_{i} - z^{\perp}_{i} \|_\infty \leq r  \}$ or we think of $x_i^\perp\in[0,L)^{d-1}$ and $Q_r^\perp(x_i^\perp)$ as a subset of $\R^{d-1}$. 
We denote also by $Q^i_r(t_i)\subset\R$ the interval of length $r$ centred in $t_i$.

From~\cite{gr,dr_arma} we recall that, using the equality $|\chi_E(x)-\chi_E(x+\zeta)|=|\chi_E(x)-\chi_E(x+\zeta_ie_i)|+|\chi_E(x+\zeta_ie_i)-\chi_E(x+\zeta)|-2|\chi_E(x)-\chi_E(x+\zeta_ie_i)||\chi_E(x+\zeta_ie_i)-\chi_E(x+\zeta)|$ and $Q_L$-periodicity,  the following lower bound holds.

\begin{align}
	\label{eq:gstr1}
	\Fcal_{\tau,L} (E) 
	&\geq-\frac{1}{L^d}\sum_{i=1}^{d}\per_{1i}(E,[0,L)^d) + \frac{1}{L^d}\sum_{i=1}^{d} \Big[\int_{[0,L)^d\cap \partial E} \int_{\R^d} |\nu^{E}_{i} (x)| |\zeta_{i} | K_{\tau}(\zeta) \d\zeta \d\mathcal H^{d-1}(x) \notag\\ 
	&- \int_{[0,L)^d } \int_{\R^d} |\chi_{E}(x + \zeta_ie_i)  - \chi_{E}(x) | K_{\tau}(\zeta) \d\zeta\dx \Big] \notag\\ 
	&+ \frac{2}{d} \frac{1}{L^d}\sum_{i=1}^d \int_{[0,L)^d } \int_{\R^d} |\chi_{E}(x + \zeta_{i}e_i) -\chi_{E}(x) | | \chi_{E}(x + \zeta^{\perp}_i) - \chi_{E}(x) | K_{\tau}(\zeta)
	\d\zeta \dx.
\end{align}

Notice that in~\eqref{eq:gstr1} equality holds whenever the set $E$ is a union of stripes.  Thus, proving that unions of stripes with density $1/2$ are the minimizers of the r.h.s. of~\eqref{eq:gstr1} implies that they are the minimizers for $\Fcal_{\tau,L}$.

Let us define
\[
\widehat K_{\tau}(\zeta_i)=\int_{ \R^{d-1}}K_{\tau}(\zeta_ie_i+\zeta_i^\perp)\d\zeta_i^\perp.
\]

As in Section 7 of~\cite{dr_arma} we further decompose the r.h.s. of~\eqref{eq:gstr1} as follows.

\begin{equation}\label{eq:decomp_r}
	\begin{split}
		&-\frac{1}{L^d}\per_{1i}(E,[0,L)^d)+ \frac{1}{L^d}\Big[\int_{[0,L)^d\cap \partial E} \int_{\R^d} |\nu^{E}_{i} (x)| |\zeta_{i} | K_{\tau}(\zeta)\d\zeta\d\mathcal H^{d-1}(x) \\& - \int_{[0,L)^d } \int_{\R^d} |\chi_{E}(x + \zeta_ie_i)  - \chi_{E}(x) | K_{\tau}(\zeta) \d\zeta \dx \Big]
		= \frac{1}{L^d}\int_{[0,L)^{d-1}}  \sum_{s\in  \partial E_{t^{\perp}_{i}}\cap [0,L]} r_{i,\tau}(E,t_{i}^{\perp},s)
		\dt^{\perp}_i, 
	\end{split}
\end{equation}
where  for $s\in \partial E_{t_{i}^{\perp}}$ 
\begin{equation}\label{eq:ritau}
	\begin{split}
		r_{i,\tau}(E, t_{i}^{\perp},s) := -1 + \int_{\R} |\zeta_{i}| \widehat K_\tau (\zeta_{i})\d\zeta_i &- \int_{s^-}^{s}\int_{0}^{+\infty} |\chi_{E_{t_{i}^{\perp}}}(u + \rho) - \chi_{E_{t_{i}^{\perp}}}(u) | \widehat K_\tau (\rho)\d\rho \du  \\ & - \int_{s}^{s^+}\int_{-\infty}^{0} |\chi_{E_{t_{i}^{\perp}}}(u + \rho) - \chi_{E_{t_{i}^{\perp}}}(u) |\widehat K_\tau (\rho) \d\rho \du.\\ 
	\end{split}
\end{equation}
and $s^-<s<s^+$ are as in~\eqref{eq:s+s-}.

Defining
\begin{equation} %---{{{
	\label{eq:defFE}
	\begin{split}
		f_{E}(t_{i}^{\perp},t_{i},\zeta_{i}^{\perp},\zeta_{i}): =  |\chi_{E}(t_ie_i+t_i^\perp + \zeta_{i}e_i )  - \chi_{E}(t_ie_i+t_i^\perp)| |\chi_{E}(t_ie_i+t_i^\perp + \zeta^{\perp}_{i}) - \chi_{E}(t_ie_i+t_i^\perp) |,
	\end{split}
\end{equation} %---}}}
one has that
\begin{align}
	\label{eq:decomp_double_prod}
	\frac{2}{d} \frac{1}{L^d}\sum_{i=1}^d &\int_{[0,L)^d } \int_{\R^d} |\chi_{E}(x + \zeta_{i}e_i) -\chi_{E}(x) | | \chi_{E}(x + \zeta^{\perp}_i) - \chi_{E}(x) | K_{\tau}(\zeta)
	\d\zeta \dx=\notag\\
	&=\frac{2}{d}\frac{1}{L^d} \int_{[0,L)^d } \int_{\R^d} f_{E}(t_{i}^{\perp},t_{i},\zeta_{i}^{\perp},\zeta_{i}) K_{\tau}(\zeta)  \d\zeta \dt\notag\\
	& =\frac{1}{L^d}  \int_{[0,L)^{d-1}} \sum_{s\in \partial E_{t_{i}^{\perp}}\cap [0,L]} v_{i,\tau}(E,t_{i}^{\perp},s)\dt_{i}^\perp + \frac{1}{L^d}\int_{[0,L)^d} w_{i,\tau}(E,t_i^\perp,t_i) \dt,
\end{align}
where
\begin{equation}\label{eq:witau}
	{w}_{i,\tau}(E,t_{i}^{\perp},t_{i}) = \frac{1}{d}\int_{\R^d}  
	f_{E}(t_{i}^{\perp},t_{i},\zeta_{i}^{\perp},\zeta_{i}) K_\tau(\zeta)  \d\zeta. 
\end{equation}
and
\begin{equation}\label{eq:vitau}
	v_{i,\tau}(E,t_{i}^{\perp},s) =  \frac{1}{2d}\int_{s^{-}}^{s^{+}} \int_{\R^{d}} f_{E}(t^{\perp}_{i},u,\zeta^{\perp}_{i},\zeta_{i}) K_\tau(\zeta) \d\zeta\du.
\end{equation}

Hence, putting together~\eqref{eq:decomp_r} and~\eqref{eq:decomp_double_prod} one has the following decomposition

\begin{align}
	\label{eq:decomposition}
	\Fcal_{\tau,L} (E) 
	&\geq \frac{1}{L^d}\int_{[0,L)^{d-1}}  \sum_{s\in  \partial E_{t^{\perp}_{i}}\cap [0,L]} r_{i,\tau}(E,t_{i}^{\perp},s)
	\dt^{\perp}_i\notag\\
	&+\frac{1}{L^d}  \int_{[0,L)^{d-1}} \sum_{s\in \partial E_{t_{i}^{\perp}}\cap [0,L]} v_{i,\tau}(E,t_{i}^{\perp},s)\dt_{i}^\perp \notag\\
	&+ \frac{1}{L^d}\int_{[0,L)^d} w_{i,\tau}(E,t_i^\perp,t_i) \dt.
\end{align}

The term $r_{i,\tau}$ penalizes oscillations with high frequency  in direction $e_i$, namely sets $E$ whose slices in direction $e_i$ have boundary points at small minimal distance (see Lemma~\ref{rmk:stimax1}). The term $v_{i,\tau}$  penalizes oscillations in direction $e_i$ whenever the neighbourhood of the point in $\partial E\cap{Q_l(z)}$ is close in $L^1$ to a stripe oriented along $e_j$ (see Proposition~\ref{lemma:stimaContributoVariazionePiccola}).

For every cube  $Q_{l}(z)$, with $l<L$ and $z\in[0,L)^d$, define now the following localization of $\Fcal_{\tau,L}$
\begin{equation} %---{{{
	\label{eq:fbartau}
	\begin{split}
		\bar{F}_{i,\tau}(E,Q_{l}(z)) &:= \frac{1}{l^d  }\Big[\int_{Q^{\perp}_{l}(z_{i}^{\perp})} \sum_{\substack{s \in \partial E_{t_{i}^{\perp}}\\ t_{i}^{\perp}+se_i\in Q_{l}(z)}} (v_{i,\tau}(E,t_{i}^{\perp},s)+ r_{i,\tau}(E,t_{i}^{\perp},s)) \dt_{i}^{\perp} + \int_{Q_{l}(z)} {w_{i,\tau}(E,t_{i}^{\perp}, t_i) \dt}\Big],\\
		\bar{F}_{\tau}(E,Q_{l}(z)) &:= \sum_{i=1}^d\bar F_{i,\tau}(E,Q_{l}(z)).
	\end{split}
\end{equation} %---}}}

The following inequality holds:
\begin{equation}
	\label{eq:gstr14}
	\begin{split}
		\Fcal_{\tau,L}(E) \geq \frac{1}{L^d} \int_{[0,L)^d}  
		\bar{F}_{\tau}(E,Q_{l}(z)) \dz. 
	\end{split}
\end{equation}
Since in~\eqref{eq:gstr14} equality holds for unions of stripes, in order to prove Theorem~\ref{thm:main} one can reduce to show that the minimizers of its right hand side  are periodic optimal stripes  of density $1/2$ provided $\tau$ and $L$ satisfy the conditions of the theorem.

In the next definition we define a quantity which measures the $L^1$ distance of a set from being a union of stripes.

\begin{definition}
	\label{def:defDEta}
	For every $\eta$ we denote by $\Acal^{i}_{\eta}$ the family of all sets $F$ such that  
	\begin{enumerate}[(i)]
		\item they are union of stripes oriented along the direction $e_i$ 
		\item their connected components of the boundary are distant at least $\eta$. 
	\end{enumerate}
	We denote by 
	\begin{equation} %---{{{
		\label{eq:defDEta}
		\begin{split}
			D^{i}_{\eta}(E,Q) := \inf\Big\{ \frac{1}{\vol(Q)} \int_{Q} |\chi_{E} -\chi_{F}|:\ F\in \Acal^{i}_{\eta} \Big\} \quad\text{and}\quad D_{\eta}(E,Q) = \inf_{i} D^{i}_{\eta}(E,Q).
		\end{split}
	\end{equation} %---}}}
	Finally, we let $\mathcal A_\eta:=\cup_{i}\mathcal A^i_{\eta}$.
\end{definition}

We recall also the following properties of the functional defined in~\eqref{eq:defDEta}.

\begin{remark} The distance function from the set of stripes satisfies the following properties.
	\label{rmk:lip} \ 
	\begin{enumerate}[(i)]
		\item Let $E\subset\R^d$.  Then the map $z\mapsto D_{\eta}(E,Q_{l}(z))$ is Lipschitz, with Lipschitz constant $C_d/l$, where $C_d$ is a constant depending only on the dimension $d$. 
		
		In particular, whenever $D_{\eta}(E,Q_{l}(z)) > \alpha$ and $D_{\eta}(E,Q_{l}(z')) < \beta$,  then $|z - z'|> l(\alpha - \beta)/C_{d}$.

		\item
		For every $\varepsilon$ there exists ${\delta}_0= \delta_0(\varepsilon)$ such that  for every $\delta \leq \delta_0 $ whenever $D^{j}_{\eta}(E,Q_{l}(z))\leq \delta$ and $D^{i}_{\eta}(E,Q_{l}(z))\leq \delta$ with $i\neq j$ for some $\eta>0$,  it holds 
		\begin{equation*}
			% \label{eq:gsmstr2}
			\begin{split}
				\min\big(|Q_l(z)\setminus E|, |E \cap Q_l(z)| \big) \leq\varepsilon. 
			\end{split}
		\end{equation*}
	\end{enumerate}
\end{remark}

%---{{{ sec:One-dimensional
\section{Key features of the one-dimensional problem}
\label{sec:One-Dimensional}

In this section we study properties of the energy functional  $\Fcal_{\tau, L}$ on one-dimensional sets, namely sets belonging to the set $\mathcal C_{L}$ defined in \eqref{eq:cl}, and of its minimizers. The main result of this section, which will be used in the proof of Theorem \ref{thm:main} is Theorem \ref{cor:conv}.

As recalled in Section \ref{sec:introduction}, letting $e_{\infty,\tau}$ be the minimal value obtained by $\Fcal_{\tau, L}$ on the class of sets $\mathcal C_L$ as $L$ varies (see \eqref{eq:einfty}) and 
\begin{equation*}
	E_{h}=\bigcup_{k\in\Z}[2kh, (2k+1)h)\times\R^{d-1},
\end{equation*} 
 one has the following 

\begin{theorem}[\cite{dr_arma}Theorem 1.1]\label{thm:emin}
	There exists $\bar{\tau}_0>0$ such that for all $0\le\tau\leq\bar{\tau}_0$ there exists a unique $h^*_\tau>0$ such that
	\begin{equation}\label{eq:einftyeq}
		e_{\infty,\tau}=\Fcal_{\tau,2h^*_\tau}(E_{h^*_\tau})=\inf_h\Fcal_{\tau,2h}(E_{h}).
	\end{equation} 
\end{theorem}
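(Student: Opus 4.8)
The plan is to study the one-dimensional energy functional $h\mapsto\Fcal_{\tau,2h}(E_h)$ directly by an explicit computation, reducing everything to a one-variable variational problem, and then to show that as $\tau\to0$ this function has a unique nondegenerate minimizer which persists for $\tau$ small. First I would write out $\Fcal_{\tau,2h}(E_h)$ explicitly. Since $E_h$ is a union of stripes, the inequality \eqref{eq:gstr1} is an equality, and all the cross terms involving $\chi_E(x+\zeta_i^\perp)$ vanish for the direction orthogonal to the stripes while the slicing in the stripe direction is one-dimensional; thus $\Fcal_{\tau,2h}(E_h)$ equals a purely one-dimensional expression $g_\tau(h):=-\frac1h+\frac1h\int_0^{2h}\int_{\R}(\cdots)\widehat K_\tau(\rho)\,d\rho$, i.e. the perimeter contribution $-1/h$ (per unit length, density $1/2$ gives one interface per period $2h$) plus the nonlocal repulsion between the stripe at $[0,h)$ and its periodic translates, integrated against the reduced kernel $\widehat K_\tau$. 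Using the scaling $K_\tau(\zeta)=\tau^{-p/\beta}K_1(\zeta\tau^{-1/\beta})$ one checks that $\widehat K_\tau(\rho)=\tau^{-(p-d+1)/\beta}\widehat K_1(\rho\tau^{-1/\beta})$ and, substituting $h=\tau^{-1/\beta}s$, that $g_\tau(h)=\tau^{(p-d)/\beta}\,G(s)$ for a fixed profile $G$ independent of $\tau$ — up to the fact that the periodic sum over translates is only conditionally summable and must be handled by the same symmetrization identity already used to derive \eqref{eq:gstr1}, which is exactly why the combination of attractive and repulsive terms in \eqref{eq:ef} is finite.

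The core of the argument is then: the rescaled profile $G(s)$ is strictly convex (or at least has a unique critical point which is a strict minimum) on $(0,\infty)$, with $G(s)\to+\infty$ as $s\to0^+$ (the $-1/h$ term is dominated by the diverging repulsion when stripes are very thin — more precisely the difference $\int|\zeta_i|\widehat K_1 - $ (repulsion) blows up) and $G(s)\to 0^-$ or to a finite negative limit as $s\to\infty$ in such a way that a unique interior minimizer $s^*$ exists. I would establish strict convexity by differentiating twice under the integral sign: the second derivative picks up the second difference quotient of the kernel, and for the power-law kernel $K_1(\zeta)=(\|\zeta\|_1+1)^{-p}$ with $p\geq d+2$ one has enough decay and enough convexity of $\rho\mapsto\widehat K_1(\rho)$ (this is where reflection positivity / convexity of the one-dimensional reduced kernel, as in \cite{muller1993singular,chen2005periodicity,giuliani06_ising_model_with_long_range}, enters) to conclude. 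Then $h^*_\tau:=\tau^{-1/\beta}s^*$ is the unique minimizer of $g_\tau$, and since $\Fcal_{\tau,2h}(E_h)=g_\tau(h)$ while, by definition of $\mathcal C_L$ and the fact that general $L$-periodic one-dimensional sets are handled by a further reduction (cutting at boundary points and comparing with $E_h$ via the $r_{i,\tau}$ optimization of Lemma \ref{rmk:stimax1}), $e_{\infty,\tau}=\inf_h g_\tau(h)=g_\tau(h^*_\tau)$.

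The main obstacle I expect is twofold. First, proving that the infimum over \emph{all} sets in $\mathcal C_L$ and over all $L$ is actually attained on \emph{simple} periodic stripes $E_h$ (density exactly $1/2$, single interface per period) rather than on some more complicated one-dimensional periodic set — this requires the one-dimensional rigidity already encapsulated in the $r_{i,\tau}$ term and a slicing/averaging argument showing any competitor has energy density $\geq\min_h g_\tau(h)$; this is essentially the content of \cite{chen2005periodicity,giuliani06_ising_model_with_long_range} adapted to the present normalization. Second, the uniqueness of $h^*_\tau$: existence of an optimal period is comparatively soft (lower semicontinuity plus coercivity of $G$), but uniqueness genuinely needs the strict convexity of $G$, and verifying that the power-law reduced kernel $\widehat K_1$ yields a strictly convex profile in the full range $p\geq d+2$ — not just $p>2d$ — is the delicate quantitative estimate; I would handle it by a careful analysis of $\widehat K_1''$ and the alternating series defining $G''$, exploiting that consecutive terms have opposite signs and decreasing magnitude so that the sum is controlled by its first term, which is positive.
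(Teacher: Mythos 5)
Theorem \ref{thm:emin} is not proved in this paper: it is quoted verbatim from \cite{dr_arma} (Theorem~1.1), and the paper only recalls the supporting facts it needs downstream, namely the explicit formula \eqref{eq:enform1}--\eqref{eq:enform2} for $e_\tau(h)$ (computed in \cite{gr,ker}) and the perturbative convexity analysis of Lemma~\ref{lemma:convint}. So your proposal should be read against the structure the paper actually relies on.

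Your overall plan is in the right spirit: reduce to a one-variable problem $h\mapsto e_\tau(h)$, show the infimum over all one-dimensional periodic competitors is attained on simple period-$2h$, density-$1/2$ stripes (a reflection-positivity/rigidity fact, correctly attributed to the \cite{muller1993singular,chen2005periodicity,giuliani06_ising_model_with_long_range} circle of ideas), and deduce uniqueness of the optimal period from strict convexity near the minimum. You also correctly identify both genuinely delicate points. However, there is a concrete error at the center of the argument: the claimed exact scaling $g_\tau(h)=\tau^{(p-d)/\beta}G(h\tau^{1/\beta})$ with a $\tau$-independent profile $G$ does \emph{not} hold for the normalized functional $\Fcal_{\tau,L}$ of \eqref{eq:ftauel} appearing in the theorem. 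The paper's formula
\[
e_\tau(h)=-\frac1h+\frac{C(\tau^{1/\beta}/h)}{h^{q-1}}
\]
shows that as $\tau\to0$, $e_\tau(h)\to e_0(h)=-1/h+C(0)/h^{q-1}$, a nontrivial function of $h$; your scaling would force $e_\tau(h)\to 0$ pointwise (or, if $G$ is singular at $0$, would only reproduce the $h^{-(q-1)}$ term and miss the $-1/h$ term entirely, since that term carries a genuine extra factor of $\tau$ after the substitution $h=\tau^{-1/\beta}s$). The exact scaling you have in mind is a feature of the \emph{unrescaled} functional $\bar\Fcal_{J,L}$ of \eqref{eq:ef}, which is precisely why one rescales: the normalization replaces pure self-similarity by a nondegenerate $\tau\to0$ limit. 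Consequently, uniqueness of $h^*_\tau$ cannot be read off a single $\tau$-free profile; one must argue perturbatively as in Lemma~\ref{lemma:convint}, showing that $e_0$ has a unique nondegenerate critical point $h^*_0=\bigl((q-1)C(0)\bigr)^{1/(q-2)}$ and that $e_\tau$, $\partial_h e_\tau$, $\partial_h^2 e_\tau$ deviate from their $\tau=0$ counterparts by $O(\tau^{1/\beta})$ on the relevant compact interval $[\bar c_1,\bar c_2]$. If you correct the scaling step and replace it by this perturbative comparison, the rest of your outline matches the intended argument; the alternating-series control of $C''(s)$ you sketch at the end is exactly the kind of estimate needed, and it is what lets the convexity argument reach the full range $p\geq d+2$.
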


 The existence of at least one finite optimal period for the one-dimensional problem  has been proved in \cite{chen2005periodicity, muller1993singular, ren2003energy} using convexity arguments and in \cite{giuliani06_ising_model_with_long_range} using reflection positivity techniques. The uniqueness of the optimal period for sufficiently small $\tau$ has been proved in \cite{dr_arma} for a slightly more general class of potentials with power law scaling and reflection positivity properties. 
 
 Let now for simplicity of notation define
 \begin{equation}
 	e_\tau(h)=\Fcal_{\tau, 2h}(E_{h}).
 \end{equation}
 In particular, by Theorem \ref{thm:emin} 
 \[
 e_{\infty,\tau}=e_\tau(h^*_\tau)=\inf_he_\tau(h).
 \]
 As computed in \cite{gr,ker}, one has the following formula for the energy of stripes of width and distance $h$
 \begin{equation}\label{eq:enform1}
 	e_\tau(h)=-\frac1h+\frac{C(\tau^{1/\beta}/h)}{h^{q-1}},
 \end{equation} 
where 
\begin{equation}\label{eq:enform2}
	C(s)=\frac{2C_1}{(q-1)(q-2)}\Bigl\{\sum_{k\geq 0}\frac{2}{(2k+1+s)^{q-2}}-\frac{2}{(2k+2+s)^{q-2}}\Bigr\}
\end{equation}
and
\begin{equation*}
	C_1=\int_{\R^{d-1}}\frac{1}{(\|\xi\|_1+1)^p}\d\xi.
\end{equation*}

For a complete and detailed proof of \eqref{eq:enform1} and \eqref{eq:enform2} we refer to \cite{gr, ker}.
 
Before stating and proving Theorem \ref{cor:conv} we need a series of preliminary results.
 
 \begin{lemma}\label{lemma:convint}
 	Let $0<\eps\ll1$. There exists $\bar{\tau}_1>0$ and $0<\bar c_1<\bar c_2$, $\bar c_3>0$ such that for all $0\leq\tau\leq\bar{\tau}_1$ and for all $h>0$ such that
 	\begin{equation}\label{eq:etaue0}
 	e_\tau(h)\leq e_0(h^*_0)+\eps
 	\end{equation}
 	it holds
 	\begin{align}
 		&\bar c_1\leq h\leq\bar c_2\label{eq:c1c2}\\
 		&\partial^2_he_\tau(h)\geq \bar c_3\label{eq:c3}
 	\end{align}
 \end{lemma}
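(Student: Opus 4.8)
The plan is to analyze the explicit formula \eqref{eq:enform1}--\eqref{eq:enform2} and leverage continuity in $\tau$ at $\tau=0$, where the situation is cleanest. First I would record that at $\tau=0$ one has $e_0(h) = -\frac1h + \frac{C(0)}{h^{q-1}}$ with $q-1 = \beta+1 \geq 2$ (since $p \geq d+2$, $\beta = p-d-1 \geq 1$), and $C(0) > 0$ is an explicit positive constant coming from the alternating series in \eqref{eq:enform2}. This is a smooth, strictly convex function of $h$ on $(0,+\infty)$ of the form $-h^{-1} + c\, h^{-(q-1)}$: it tends to $+\infty$ as $h \to 0^+$, to $0^-$ as $h\to+\infty$, attains a unique negative minimum at $h^*_0 = \big((q-1)C(0)\big)^{1/(q-2)}$ when $q>2$ (with the obvious modification/limiting argument when $q=2$, where $e_0(h)=(C(0)-1)/h$ forces $C(0)>1$ and a minimizing sequence $h\to\infty$ — in the borderline exponent this degenerate case must be handled, presumably $p>d+2$ strictly is what is used, or $\bar\tau_0$ in Theorem~\ref{thm:emin} already rules it out). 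The key observation is that the sublevel set $\{h>0 : e_0(h) \leq e_0(h^*_0) + \eps\}$, for $\eps$ small, is a compact interval $[\bar c_1', \bar c_2']$ bounded away from $0$ and $+\infty$, and on a slightly larger compact interval $\partial_h^2 e_0(h) \geq 2\bar c_3' > 0$ by explicit differentiation, $\partial_h^2 e_0(h) = -2h^{-3} + q(q-1)C(0)h^{-(q-1)-2}$, which is strictly positive precisely near the minimum.

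Next I would upgrade this to small $\tau > 0$ by a perturbation/continuity argument. From \eqref{eq:enform1}--\eqref{eq:enform2}, the function $(\tau,h) \mapsto e_\tau(h)$ — and its first two $h$-derivatives — depend continuously (indeed smoothly) on $(s,h)$ where $s = \tau^{1/\beta}/h$, because the series defining $C(s)$ converges locally uniformly in $s \geq 0$ together with all its derivatives (it is dominated by $\sum_k (2k+1)^{-(q-2)}$-type tails, uniformly for $s$ in bounded sets, using $q - 2 = \beta - 1$; again the exponent range needs $\beta>1$ or a separate argument at $\beta=1$). Fix the larger compact $h$-interval $[\tfrac12\bar c_1', 2\bar c_2']$ from the $\tau=0$ analysis. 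On this interval, $e_\tau \to e_0$ in $C^2$ as $\tau \to 0$, uniformly; in particular $e_\tau(h^*_\tau) \to e_0(h^*_0)$ (this also follows from Theorem~\ref{thm:emin} and the uniqueness of $h^*_\tau$, with $h^*_\tau \to h^*_0$). Hence for $\tau \leq \bar\tau_1$ small enough: (a) any $h$ outside $[\tfrac12\bar c_1',2\bar c_2']$ has $e_\tau(h) \geq e_0(h^*_0) + 2\eps$ — for $h$ outside a large bounded set we use $e_\tau(h) = -1/h + O(h^{-(q-1)}) \to 0$ uniformly in small $\tau$, which exceeds $e_0(h^*_0)+\eps$ (a strictly negative number plus $\eps$) once $\eps$ is small; for $h$ near $0$ we use $e_\tau(h) \to +\infty$ uniformly in $\tau$ — so \eqref{eq:etaue0} forces $h \in [\bar c_1,\bar c_2]$ with $\bar c_1 := \tfrac12\bar c_1'$, $\bar c_2 := 2\bar c_2'$, giving \eqref{eq:c1c2}; and (b) on this interval $\partial_h^2 e_\tau(h) \geq \partial_h^2 e_0(h) - \|\partial_h^2 e_\tau - \partial_h^2 e_0\|_\infty \geq 2\bar c_3' - \bar c_3' = \bar c_3 > 0$, giving \eqref{eq:c3}.

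I would organize the write-up as: (1) the $\tau=0$ computation, extracting $\bar c_1', \bar c_2', \bar c_3'$ from the explicit one-variable function; (2) the uniform $C^2_{\lok}$ continuity of $(\tau,h)\mapsto e_\tau(h)$ on $(0,+\infty)$ and the uniform control of the tails $-1/h$, $C(\tau^{1/\beta}/h)/h^{q-1}$ for $h$ near $0$ and near $+\infty$; (3) the two conclusions (a), (b) above. The main obstacle I anticipate is \textbf{not} the convexity near the minimum — that is transparent from the explicit formula — but rather getting genuinely \emph{uniform} (in $\tau$) control of $e_\tau(h)$ for \emph{large} $h$ so as to exclude a second, spurious branch of the sublevel set near infinity: one must check that the positive term $C(\tau^{1/\beta}/h)\,h^{-(q-1)}$ does not rescue the energy back up toward $e_0(h^*_0)+\eps$ as $h\to\infty$ for small fixed $\tau$. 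Since $C(s)$ stays bounded (in fact $C(s) \to$ a finite limit, or even $C(s)\to 0$ if $q>2$, as $s\to 0$), this term is $o(1/h)$ for large $h$ and the $-1/h$ dominates, so $e_\tau(h) < 0 < e_0(h^*_0) + \eps$ is false — wait, $e_0(h^*_0)<0$, so one instead argues $e_\tau(h) \to 0^-$ while $e_0(h^*_0)+\eps$ can be made negative, hence large-$h$ values are excluded; this needs the quantitative rate to be uniform in $\tau \in [0,\bar\tau_1]$, which the explicit bound on $C(\cdot)$ supplies. A secondary technical point is the borderline exponent $p = d+2$ (i.e. $\beta=1$, $q=2$): there $e_0(h) = (C(0)-1)h^{-1}$ has no interior minimum, so either Theorem~\ref{thm:emin}'s $\bar\tau_0$ implicitly forces $p>d+2$ for uniqueness of $h^*_\tau$, or the $\tau>0$ term $\tau^{1/\beta}$-scaling restores convexity for each $\tau>0$; I would note this and defer to the regime in which $h^*_\tau$ is a genuine interior minimizer, as guaranteed by Theorem~\ref{thm:emin}.
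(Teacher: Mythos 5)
Your proof is correct and follows essentially the same route as the paper's: an explicit $\tau=0$ analysis identifying a compact sublevel interval on which $\partial_h^2 e_0 \geq 2\bar c_3 > 0$, followed by a uniform perturbation estimate on $e_\tau$ and its first two $h$-derivatives to transfer both the localization \eqref{eq:c1c2} and the convexity \eqref{eq:c3} to $0<\tau\ll1$. The two "main obstacles" you flag are not genuine: the paper handles the large-$h$ regime by first observing that $e_\tau(h)<0 \Rightarrow h\geq\bar c_4$ uniformly in small $\tau$ and then proving $|e_\tau(h)-e_0(h)|\leq \bar c_5\tau^{1/\beta}$ uniformly on $\{h\geq\bar c_4\}$, which reduces the whole sublevel-set question to the $\tau=0$ case; and the borderline exponent $q=2$ never occurs, since the one-dimensional kernel exponent is $q=p-d+1=\beta+2\geq 3$ under the standing hypothesis $p\geq d+2$ (you appear to conflate $q$ with $\beta+1$).
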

The above lemma implies that whenever for $\tau$ sufficiently small the energy of some periodic stripes of period $h$ is close to the minimal energy for $\tau=0$ as in  \eqref{eq:etaue0} (which we will see it is the case for the stripes of minimal period in Lemma \ref{lemma:elemma}), then $h$ must lie in some given interval \eqref{eq:c1c2} on which the function $e_\tau$ is strictly convex  \eqref{eq:c3}.

\begin{proof}
	\textbf{Step 1} First of all, we prove Lemma \ref{lemma:convint} for $\tau=0$.

	By explicit computations, one can see that the unique minimum and stationary point of $e_0$ is given by
	\begin{equation*}
		h^*_0=\Bigl((q-1)C(0)\Bigr)^{\frac{1}{q-2}}.
	\end{equation*}
Moreover, from the explicit formulas \eqref{eq:enform1} and \eqref{eq:enform2} one can directly see that there exist $0<\bar c_1<\bar c_2$ such that
\begin{equation}\label{eq:e0ineq}
	e_0(h)\leq e_0(h^*_0)+\frac32{\eps}\quad\Rightarrow\quad \bar c_1\leq h\leq \bar c_2.
\end{equation}
Indeed, 
	\begin{align*}
	e_0(h^*_{0})&=-\frac{q-2}{(q-1)^{\frac{(q-1)}{(q-2)}}}C(0)^{-\frac{1}{q-2}}=-\bar C C(0)^{-\frac{1}{q-2}},
\end{align*}
thus the left inequality in \eqref{eq:e0ineq} becomes 
\begin{equation}
	-h^{q-2}\leq -C(0)-\Bigl[\bar C C_0^{-\frac{1}{q-2}}-\frac32\eps\Bigr]h^{q-1}.
\end{equation}
Hence, the bounds in the r.h.s. of \eqref{eq:e0ineq} follow from the inequalities $-h^{q-2}\leq -C(0)$ and $-h^{q-2}\leq -\Bigl[\bar C C_0^{-\frac{1}{q-2}}-\frac32\eps\Bigr]h^{q-1}$.

Moreover, choosing eventually $\bar c_1$ and $\bar c_2$ relative to a smaller $\eps$ in \eqref{eq:e0ineq}, there exists  $\tilde c_3>2\bar c_3>0$ such that  
\begin{equation}\label{eq:d2f0}
	{\bar c_1}\leq h\leq{\bar c_2}\quad\Rightarrow  \quad\tilde c_3\geq\partial^2_h e_0(h)\geq 2\bar c_3.
\end{equation} 

Indeed, one has that
\begin{align*}
	\partial^2_h e_0(h)=\frac{-2h^{q-2}+q(q-1)C(0)}{h^{q+1}}.
\end{align*}
and 
\begin{equation*}
	(h^*_0)^{q-2}=(q-1)C(0).
\end{equation*}
In particular,
\begin{equation*}
	\partial^2_h e_0(h^*_{0})={(q-1)^{-3/(q-2)}(q-2)C(0)^{-3/(q-2)}}.
\end{equation*}
Hence also~\eqref{eq:d2f0} holds.

\textbf{Step 2} Let us now estimate the difference between  $e_\tau$ and $e_0$ and between their derivatives for small values of $\tau$. 
First of all we claim that  there exist $\bar c_4>0$ and $\tilde\tau>0$ such that $\bar c_4<h^*_{0}$ and for all $0\leq\tau\leq\tilde\tau$ one has that 
\begin{equation}\label{eq:c4}
	e_\tau(h)<0\quad\Rightarrow\quad h\geq{\bar c_4}.
\end{equation}
Moreover, by comparing the expression for $C(\tau^{1/\beta}/h)$ with the one for $C(0)$ one has that there exist $\bar c_5,\bar c_6,\bar c_7>0$ such that for all $h\geq \bar c_4$ it holds 
\begin{align}
	\big|e_\tau(h)-e_0(h)\big|&\leq \bar c_5\tau^{1/\beta}\label{eq:d0}\\
	\big|\partial_he_\tau(h)-\partial_he_0(h)\big|&\leq \bar c_6\tau^{1/\beta},\label{eq:d1}\\
	\big|\partial^2_he_\tau(h)-\partial^2_he_0(h)\big|&\leq \bar c_7\tau^{1/\beta}.\label{eq:d2}
\end{align}
While \eqref{eq:c4} and \eqref{eq:d0} follow from the formula for $e_\tau(h)$, 
in the proof of \eqref{eq:d1} and \eqref{eq:d2} one uses the formulas
\begin{equation}\label{eq:dere}
	\partial_he_\tau(h)=\frac{1}{h^2}-(q-1)\frac{C(\tau^{1/\beta}/h)}{h^q}-\Bigl(\frac{\tau^{1/\beta}}{h}\Bigr)\frac{\partial_sC(\tau^{1/\beta}/h)}{h^q}
\end{equation}
and 
	\begin{align}
		\partial^2_he_\tau(h)&=-\frac{2}{h^3}+q(q-1)\frac{C(\tau^{1/\beta}/h)}{h^{q+1}}\notag\\
		&+(q-1)\Bigl(\frac{\tau^{1/\beta}}{h}\Bigr)\frac{\partial_s C(\tau^{1/\beta}/h)}{h^{q+1}}+\Bigl(\frac{\tau^{1/\beta}}{h}\Bigr)^2\frac{\partial_s^2C(\tau^{1/\beta}/h)}{h^{q+1}}\notag\\
		&+q\Bigl(\frac{\tau^{1/\beta}}{h}\Bigr)\frac{\partial_sC(\tau^{1/\beta}/h)}{h^{q+1}}.\label{eq:d2etau}
		\end{align}

\textbf{Step 3}
We are now ready to prove \eqref{eq:c1c2}. 
	Let $0<\eps\ll1$ to be fixed later. By~\eqref{eq:c4} and~\eqref{eq:d0}, there exists $0<\check\tau$ such that whenever $0\leq\tau\leq\check \tau$ and $e_\tau(h)<0$ then $|e_\tau(h)-e_0(h)|\leq\eps/2$. In particular,
	\begin{equation*}
		e_\tau(h)\leq e_0(h^*_{0})+{\eps}\quad\Rightarrow\quad e_0(h)\leq e_0(h^*_{0})+\frac32{\eps}.
	\end{equation*}
	Thus, by~\eqref{eq:e0ineq}, one has that $\bar c_1\leq h\leq \bar c_2$.
	Provided $\eps,\check\tau$ are sufficiently small, one can assume that
	\[
	{\bar c_4}\leq{\bar c_1}\leq h\leq{\bar c_2}.
	\]
	Using~\eqref{eq:d2f0} and~\eqref{eq:d2} one obtains
	\begin{align}
		\partial_h^2e_\tau(h)&\geq	\partial_h^2e_0(h)-	|\partial_h^2e_\tau(h)-	\partial_h^2e_0(h)|\notag\\
		&\geq2\bar  c_3-\bar c_7\check\tau^{1/\beta}\notag\\
		&\geq\bar c_3>0
	\end{align}
	In particular, one has the strict convexity of $e_\tau$ on the region where the minimizers are concentrated, thus proving \eqref{eq:c3}.

\end{proof}

 From \cite{gr} we recall also the following result.

\begin{theorem}\label{T:1d} 
	There exists $C>0$ and $\bar{\tau_2}>0$ such that for every $0\leq\tau<\bar{\tau}_2$ and for every $L>0$, the minimizers of $\Fcal_{\tau,L}$ in the class $\mathcal C_L$ are periodic stripes of period $h_{\tau,L}$ for some (possibly non-unique) $h_{\tau,L}>0$ satisfying
	\begin{equation}\label{eq:hC}
	|h_{\tau,L}-h_\tau^*|\leq \frac {C} L.
	\end{equation}
\end{theorem}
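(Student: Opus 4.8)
The plan is to split the proof into a structural step and a quantitative step.

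\textbf{Reduction to one dimension; structure of minimizers.} For $E=\hat E\times\R^{d-1}\in\mathcal C_L$, oriented say along $e_1$, all the cross terms $v_{i,\tau}$, $w_{i,\tau}$ in the decomposition \eqref{eq:decomposition} vanish (and the decomposition holds with equality), so $\FtL(E)$ reduces to a one-dimensional functional of $\hat E$ built from the marginal kernel $\widehat K_\tau$: up to the factor $1/L$ it equals $-\Pe(\hat E,[0,L))$ plus the corresponding nonlocal term. I would then invoke the one-dimensional theory of \cite{chen2005periodicity,muller1993singular,ren2003energy,giuliani06_ising_model_with_long_range} (used also in \cite{gr,dr_arma}): $\widehat K_\tau$ inherits reflection positivity from $K_\tau$, so a folding/reflection argument on the interfaces of a minimizer forces all ``on''-intervals to share a common length and all ``off''-intervals another common length, while the strict monotonicity of the nonlocal repulsion in the total flip measure forces these two lengths to coincide, i.e. the minimizer has density $1/2$. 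A density-$1/2$ periodic stripe pattern is $L$-periodic precisely when its period $2h$ divides $L$, so up to translation and permutation of coordinates the minimizer is $E_h$ for some $h>0$ with $L/(2h)\in\N$. (For $L$ below a fixed constant only very small widths $h=L/(2m)$ are admissible and the minimizer may be trivial; there $C/L$ exceeds a fixed constant and the estimate is vacuous after enlarging $C$, so I treat that range separately and concentrate on $L$ large.)

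\textbf{Locating the period.} When $L=2mh$ the $2h$-periodicity of $E_h$ gives $\FtL(E_h)=\Fcal_{\tau,2h}(E_h)=e_\tau(h)$, so $h_{\tau,L}$ minimizes $m\mapsto e_\tau(L/(2m))$ over $m\in\N$. By Theorem~\ref{thm:emin}, $e_\tau$ has a unique minimizer $h^*_\tau$, where $\partial_he_\tau=0$. Taking $\bar m$ to be the nearest integer to $L/(2h^*_\tau)$, the competitor $\bar h:=L/(2\bar m)$ satisfies $|\bar h-h^*_\tau|\le\kappa_1/L$ by an elementary estimate; since $e_\tau$ is $C^2$ near $h^*_\tau$ (formula \eqref{eq:d2etau}) with $\partial_he_\tau(h^*_\tau)=0$, a Taylor expansion gives $e_\tau(\bar h)\le e_\tau(h^*_\tau)+\kappa_2/L^2$. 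Moreover $e_\tau(h^*_\tau)=e_{\infty,\tau}\le e_\tau(h^*_0)\le e_0(h^*_0)+\bar c_5\tau^{1/\beta}$ (the last step by \eqref{eq:d0}, as $h^*_0\ge\bar c_4$), so for $\tau$ small and $L$ large one obtains $e_\tau(h_{\tau,L})\le e_\tau(\bar h)\le e_0(h^*_0)+\eps$. Lemma~\ref{lemma:convint} then yields $\bar c_1\le h_{\tau,L}\le\bar c_2$ and $\partial^2_he_\tau\ge\bar c_3>0$ on that interval, and a lower Taylor expansion of $e_\tau$ about $h^*_\tau$ gives $\tfrac{\bar c_3}{2}(h_{\tau,L}-h^*_\tau)^2\le e_\tau(h_{\tau,L})-e_\tau(h^*_\tau)\le\kappa_2/L^2$, i.e. $|h_{\tau,L}-h^*_\tau|\le C/L$ with $C$ independent of $\tau$ and $L$; for $L$ below the threshold $|h_{\tau,L}-h^*_\tau|\le\bar c_2+h_{\tau,L}$ is bounded, so the bound holds after enlarging $C$.

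\textbf{Main obstacle.} The substantive part is the structural step: proving, \emph{without} any compatibility between $L$ and the optimal period, that every minimizer of $\FtL$ over the whole of $\mathcal C_L$ is a periodic stripe of density exactly $1/2$; this is where the reflection positivity / convexity of the one-dimensional problem is really needed. Granting it, locating the period is a soft consequence of the uniqueness and local strict convexity of $e_\tau$ already contained in Theorem~\ref{thm:emin} and Lemma~\ref{lemma:convint}.
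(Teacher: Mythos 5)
The paper does not prove Theorem~\ref{T:1d}: it is recalled from \cite{gr}, so there is no in-paper argument to compare against. Your outline follows the expected route and the quantitative step is essentially complete and correct. Two remarks are in order. First, in the structural step, reflection positivity of $\widehat K_\tau$ directly forces the interfaces of a minimizer with a fixed even number $n$ of jumps per period $L$ to be equispaced — this gives period $2h=2L/n$ and density $1/2$ in one stroke, rather than via the two-stage argument (first equalize the ``on'' and ``off'' lengths separately, then match them) you sketch; the minimization over $n$ is then exactly the minimization of $e_\tau$ over $h\in L/(2\N)$, as you use. Second, for $L$ below a fixed threshold every admissible $h\in L/(2\N)$ gives $e_\tau(h)\geq 0$, so the minimizer in $\mathcal C_L$ is the trivial set and the statement must be read with the trivial set counted as a degenerate stripe, or simply restricted to $L$ large; you flag this correctly, and it is harmless because Theorem~\ref{T:1d} is only invoked in the paper for $L\geq\bar L$ (Theorem~\ref{cor:conv}). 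With these caveats, your quantitative argument — comparing with the nearest-admissible competitor $\bar h=L/(2\bar m)$, using the $\tau$-uniform two-sided bounds on $\partial_h^2 e_\tau$ near $h^*_\tau$ furnished by Lemma~\ref{lemma:convint}, and Taylor-expanding about the stationary point $h^*_\tau$ to get $\tfrac{\bar c_3}{2}|h_{\tau,L}-h^*_\tau|^2\leq e_\tau(h_{\tau,L})-e_\tau(h^*_\tau)\lesssim 1/L^2$ — indeed yields $|h_{\tau,L}-h^*_\tau|\leq C/L$ with $C$ uniform in $\tau$, which is the content of \eqref{eq:hC}.
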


We collect  also two useful facts  of immediate proof in the following lemma. 

\begin{lemma}\label{lemma:elemma}
	One has the following:
	\begin{enumerate}
		\item 	\begin{equation}
		\lim_{L\to+\infty}e_\tau(h_{\tau,L})=\inf_L\inf_{h\in L/2\N}e_\tau(h)=e_\tau(h^*_\tau)
	\end{equation}
\item For any $\eps>0$ and $c>0$ there exists $\bar{\tau}_3>0$ such that for all $0\leq\tau\leq\bar {\tau}_3$ it holds
\begin{equation}
	e_\tau(h^*_\tau)\leq e_0(h^*_0)+c\eps.
\end{equation}
\end{enumerate}
\end{lemma}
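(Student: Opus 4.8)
The plan is to deduce both statements directly from Theorem~\ref{thm:emin}, Theorem~\ref{T:1d}, and the estimates established in Lemma~\ref{lemma:convint}; neither item requires new ideas, only a careful bookkeeping of the smallness thresholds.

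For the first item, I would start by recalling that an $L$-periodic one-dimensional union of stripes of density $1/2$ has period $2h$ with $L\in 2h\N$, i.e. $h\in L/2\N$, and its energy equals $e_\tau(h)$. By Theorem~\ref{T:1d}, for $\tau<\bar\tau_2$ the minimizers of $\Fcal_{\tau,L}$ in $\mathcal C_L$ are precisely of this form, so $\inf_{E\in\mathcal C_L}\Fcal_{\tau,L}(E)=\min_{h\in L/2\N}e_\tau(h)=e_\tau(h_{\tau,L})$. Taking the infimum over $L>0$ and observing that $\bigcup_{L>0}L/2\N=(0,+\infty)$ (for any $h>0$ pick $L=2h$), one gets $\inf_L\inf_{h\in L/2\N}e_\tau(h)=\inf_{h>0}e_\tau(h)$, which equals $e_\tau(h^*_\tau)$ by Theorem~\ref{thm:emin}. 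For the limit, note that $e_\tau(h_{\tau,L})\geq e_\tau(h^*_\tau)$ for every $L$, while the bound $|h_{\tau,L}-h^*_\tau|\leq C/L$ from Theorem~\ref{T:1d} together with the continuity of $h\mapsto e_\tau(h)$ (manifest from the explicit formula \eqref{eq:enform1}) gives $e_\tau(h_{\tau,L})\to e_\tau(h^*_\tau)$ as $L\to+\infty$, which is the claimed identity.

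For the second item, since $h^*_\tau$ minimizes $e_\tau$ we have $e_\tau(h^*_\tau)\leq e_\tau(h^*_0)$. Because $h^*_0>\bar c_4$ (Step~2 in the proof of Lemma~\ref{lemma:convint}), estimate \eqref{eq:d0} may be applied at $h=h^*_0$ and yields $e_\tau(h^*_0)\leq e_0(h^*_0)+\bar c_5\tau^{1/\beta}$ for all $0\leq\tau\leq\tilde\tau$. Hence $e_\tau(h^*_\tau)\leq e_0(h^*_0)+\bar c_5\tau^{1/\beta}$, and it suffices to set $\bar\tau_3=\min\{\bar\tau_0,\tilde\tau,(c\eps/\bar c_5)^{\beta}\}$ so that $\bar c_5\tau^{1/\beta}\leq c\eps$ whenever $\tau\leq\bar\tau_3$ (the constraint $\bar\tau_3\leq\bar\tau_0$ ensuring that $h^*_\tau$ is well defined).

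There is no substantial obstacle here: the only points deserving attention are (i) identifying $\inf_{\mathcal C_L}\Fcal_{\tau,L}$ with $\min_{h\in L/2\N}e_\tau(h)$ through Theorem~\ref{T:1d}, rather than treating it as a matter of definition, and (ii) keeping the various thresholds and constants ($\bar\tau_0$ from Theorem~\ref{thm:emin}, $\bar\tau_2$ and $C$ from Theorem~\ref{T:1d}, $\tilde\tau,\bar c_4,\bar c_5$ from Lemma~\ref{lemma:convint}) mutually consistent in the final choice of $\bar\tau_3$.
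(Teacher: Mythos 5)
Your proof is correct and follows the natural route that the paper implicitly has in mind when it calls these facts ``of immediate proof'': item~1 is a direct combination of Theorem~\ref{thm:emin}, Theorem~\ref{T:1d} and the continuity of $h\mapsto e_\tau(h)$ visible from \eqref{eq:enform1}, and item~2 combines the minimality of $h^*_\tau$ with the uniform estimate \eqref{eq:d0} at $h=h^*_0>\bar c_4$. The bookkeeping of the various thresholds ($\bar\tau_0$, $\bar\tau_2$, $\tilde\tau$) and the explicit choice $\bar\tau_3=\min\{\bar\tau_0,\tilde\tau,(c\eps/\bar c_5)^\beta\}$ are exactly the details one needs to supply; no gap.
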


As a direct consequence of Theorem \ref{T:1d}, of  Lemma \ref{lemma:elemma} and Lemma \ref{lemma:convint}, one obtains  the following 
\begin{theorem}
	\label{cor:conv}
	Let $0<\eps\ll1$. There exist $\bar{\tau}_4\leq\min\{\bar{\tau}_0,\dots,\bar{\tau}_3\}$, $\bar L>0$ such that for all $0\leq\tau\leq\bar \tau_4$ and for all $L\geq\bar L$
	\begin{equation}
		\bar c_1<h_{\tau,L}<\bar c_2\quad\text{ and }\quad \partial_h^2e_\tau(h_{\tau,L})\geq\bar c_3>0.
	\end{equation}
	where $\bar c_1,\bar c_2$ and $\bar c_3$ are defined in Lemma \ref{lemma:convint}.

	Moreover, the following holds:
	\begin{equation}\label{eq:epsineq}
		|\partial_he_\tau(h_{\tau,L})|\leq\eps, \qquad|e_{\tau}(h_{\tau,L})-e_\tau(h^*_\tau)|\leq\frac{\eps C}{L},
	\end{equation}
where $C$ is the constant appearing in \eqref{eq:hC}.
	
\end{theorem}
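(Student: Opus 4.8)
The plan is to assemble Theorem~\ref{cor:conv} directly from the three preliminary results, \Cref{T:1d}, \Cref{lemma:elemma}, and \Cref{lemma:convint}, by verifying that the hypothesis~\eqref{eq:etaue0} of \Cref{lemma:convint} is met by $h=h_{\tau,L}$ once $\tau$ is small enough and $L$ is large enough. First I would fix $0<\eps\ll1$ and feed this same $\eps$ into \Cref{lemma:convint} to obtain the constants $\bar c_1<\bar c_2$, $\bar c_3>0$ together with the threshold $\bar\tau_1$; by construction these are the constants that must appear in the conclusion. Then I would set $\bar\tau_4:=\min\{\bar\tau_0,\dots,\bar\tau_3\}$ (also requiring $\bar\tau_4\le\bar\tau_1$, which is automatic since $\bar\tau_1$ is one of the $\bar\tau_j$), and choose $\bar L$ large enough for the two items below. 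The point is that \Cref{lemma:convint} does all the real work once its hypothesis is in force.

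The key step is to check~\eqref{eq:etaue0} for $h=h_{\tau,L}$. By \Cref{lemma:elemma}(1), $e_\tau(h_{\tau,L})\to e_\tau(h^*_\tau)$ as $L\to\infty$; quantitatively one can use the bound $|h_{\tau,L}-h^*_\tau|\le C/L$ from~\eqref{eq:hC} in \Cref{T:1d}, together with local Lipschitz control of $e_\tau$ near $h^*_\tau$ (which follows from the explicit formula~\eqref{eq:enform1}--\eqref{eq:enform2} and the already-established fact that $h^*_\tau$ stays in a fixed compact interval bounded away from $0$), to get $e_\tau(h_{\tau,L})\le e_\tau(h^*_\tau)+C'/L$ for some $C'$ uniform in $\tau\le\bar\tau_4$. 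By \Cref{lemma:elemma}(2), applied with the constant $c$ chosen so that $c\eps\le \tfrac12\eps$ (i.e.\ $c=\tfrac12$), there is $\bar\tau_3$ with $e_\tau(h^*_\tau)\le e_0(h^*_0)+\tfrac12\eps$ for $\tau\le\bar\tau_3$. Hence, choosing $\bar L$ so that $C'/\bar L\le\tfrac12\eps$, for all $\tau\le\bar\tau_4$ and $L\ge\bar L$ we get $e_\tau(h_{\tau,L})\le e_0(h^*_0)+\eps$, which is exactly~\eqref{eq:etaue0}. \Cref{lemma:convint} then yields $\bar c_1\le h_{\tau,L}\le\bar c_2$ and $\partial_h^2 e_\tau(h_{\tau,L})\ge\bar c_3>0$; the strictness $\bar c_1<h_{\tau,L}<\bar c_2$ can be obtained by running the argument with a slightly enlarged interval or by noting that $h_{\tau,L}$ is a minimizer and hence an interior stationary point, so it cannot sit at an endpoint where the convexity estimate would already force a different value.

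For the two additional estimates in~\eqref{eq:epsineq}: since $h_{\tau,L}$ minimizes $\Fcal_{\tau,L}$ in $\mathcal C_L$ and, by \Cref{T:1d}, the minimizer is a periodic stripe pattern, $h_{\tau,L}$ is a critical point of $h\mapsto e_\tau(h)$ \emph{restricted to the admissible periods} $h\in L/(2\N)$; to upgrade this to $|\partial_h e_\tau(h_{\tau,L})|\le\eps$ I would use that consecutive admissible periods differ by $O(h^2/L)=O(1/L)$ (because $L=2kh$ forces $\Delta h\approx h/k\approx 2h^2/L$) and combine the discrete minimality with the uniform bound on $\partial_h^2 e_\tau$ near $h^*_\tau$ (from \Cref{lemma:convint} or directly from the formulas), so the continuous derivative at $h_{\tau,L}$ is at most (constant)$\times(1/L)$ in absolute value, which is $\le\eps$ for $L\ge\bar L$. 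Alternatively, and more cleanly, use $|h_{\tau,L}-h^*_\tau|\le C/L$ and $\partial_h e_\tau(h^*_\tau)=0$ together with the bound on $\partial_h^2 e_\tau$ on the compact interval to get $|\partial_h e_\tau(h_{\tau,L})|\le (\sup|\partial_h^2 e_\tau|)\,C/L\le\eps$ after enlarging $\bar L$. The second inequality, $|e_\tau(h_{\tau,L})-e_\tau(h^*_\tau)|\le \eps C/L$, follows from a second-order Taylor expansion of $e_\tau$ about $h^*_\tau$: the first-order term vanishes, so $|e_\tau(h_{\tau,L})-e_\tau(h^*_\tau)|\le\tfrac12(\sup|\partial_h^2 e_\tau|)\,|h_{\tau,L}-h^*_\tau|^2\le\tfrac12(\sup|\partial_h^2 e_\tau|)(C/L)^2$, which is $\le \eps C/L$ once $L\ge\bar L$ (absorbing constants into $\bar L$ and shrinking $\eps$ if necessary). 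I expect the only mild subtlety to be bookkeeping of the constants — making sure the $C$ in~\eqref{eq:epsineq} is the \emph{same} $C$ as in~\eqref{eq:hC}, which one arranges by tracking it through the Taylor estimate and dumping all other constants into $\bar L$ — rather than any genuine difficulty; the substance is entirely contained in the already-proved \Cref{lemma:convint}, \Cref{T:1d}, and \Cref{lemma:elemma}.
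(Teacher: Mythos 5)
Your proposal is correct and follows the same route the paper intends: the paper states Theorem~\ref{cor:conv} ``as a direct consequence of Theorem~\ref{T:1d}, of Lemma~\ref{lemma:elemma} and Lemma~\ref{lemma:convint}'' without writing out the details, and you supply exactly those details — verifying hypothesis~\eqref{eq:etaue0} for $h=h_{\tau,L}$ via~\eqref{eq:hC} and Lemma~\ref{lemma:elemma}(2), then invoking Lemma~\ref{lemma:convint}, and obtaining~\eqref{eq:epsineq} by Taylor expansion about the critical point $h^*_\tau$ using the uniform $\partial_h^2 e_\tau$ bound. The bookkeeping (choosing $\bar L$ to absorb the auxiliary constants and keep the $C$ of~\eqref{eq:hC} in~\eqref{eq:epsineq}) is handled appropriately.
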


\section{Preliminary lemmas}

In this section we collect a series of Lemmas and Propositions which will be used in the proof of Theorem~\ref{thm:main}. In the one-dimensional optimization Lemma~\ref{lemma:1D-optimization} and in Lemma~\ref{lemma:stimaLinea} we will have now to take into account the minimal energy density for periodic sets of period compatible with the length of the interval on which the optimization takes place. 

At this aim we introduce the following notation: for all intervals $I\subset\R$, we define
\[
h_\tau(I)=\argmin \bigl\{e_{\tau}(h):h\in|I|/(2\N)\bigr\}.
\]
In general, $h_\tau(I)$ might contain different periods $h$ giving all the same energy $e_\tau(h)$. Whenever, with a slight abuse of notation, we will be speaking of $h_\tau(I)$ as if it were a single period is because the properties of all the periods contained in $h_\tau(I)$ are in that case equivalent (meaning than that all $h\in h_\tau(I)$ have the same property).

We start with recalling the following lemma, corresponding to Remark 7.1 in~\cite{dr_arma}. The term $r_{i,\tau}$ penalizes small  sets $E$ whose  one-dimensional slices in direction $e_i$ have boundary points which are close to each other. This is expressed quantitatively by the estimate \eqref{eq:stimamax1_eq}. 

\begin{lemma}
	\label{rmk:stimax1}
	There exist $\eta_0 > 0$ and  $\tau_{0} > 0$ such that for every  $0<\tau< \tau_0$, whenever  $E\subset \R^d$ and $s^-<s<s^+\in \partial E_{t_{i}^{\perp}}$ are three consecutive points satisfying  $\min(|s - s^- |,|s^+ -s |) <\eta_0$, then $r_{i,\tau}(E,t_{i}^{\perp},s) > 0$. 
	
	In particular, the following estimate holds 
	\begin{equation}
		\label{eq:stimamax1_eq}
		\begin{split}
			r_{i,\tau}(E,t^{\perp}_{i},s) \geq -1 + C_{1}C_{2} \min(|s-s^+ |^{-\beta},\tau^{-1}) + C_{1}C_{2}\min(|s-s^-|^{-\beta} , \tau^{-1})
		\end{split}
	\end{equation}
	where $C_{1}=\int_{\R^{d-1}}\frac{1}{(\|\xi\|_1+1)^p}\d\xi$ and $C_{2}=\frac{1}{(q-1)(q-2)}$. 
	
%	Moreover, for any $C>0$ there exist $\eta_C,\tau_C>0$ such that for every $0<\tau\leq\tau_C$ whenever $s^-<s<s^+\in\partial E$ satisfy $\min\{|s-s^-|,\,|s-s^+|\}<\eta_C$ then $r_{i,\tau}(E,t_i^\perp,s)>C$.
	\end{lemma}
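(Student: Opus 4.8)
The plan is to prove Lemma~\ref{rmk:stimax1} by a direct lower-bound computation on the quantity $r_{i,\tau}(E,t_i^\perp,s)$ defined in~\eqref{eq:ritau}, exploiting the structure of $\widehat K_\tau$ and the positivity of the integrand. Recall that
\[
r_{i,\tau}(E,t_i^\perp,s) = -1 + \int_\R |\zeta_i|\widehat K_\tau(\zeta_i)\d\zeta_i - \int_{s^-}^s\!\int_0^{+\infty}|\chi_{E_{t_i^\perp}}(u+\rho)-\chi_{E_{t_i^\perp}}(u)|\widehat K_\tau(\rho)\d\rho\du - \int_s^{s^+}\!\int_{-\infty}^0|\chi_{E_{t_i^\perp}}(u+\rho)-\chi_{E_{t_i^\perp}}(u)|\widehat K_\tau(\rho)\d\rho\du.
\]
The key observation is that $\int_\R|\zeta_i|\widehat K_\tau(\zeta_i)\d\zeta_i = \int_{\R^d}|\zeta_i|K_\tau(\zeta)\d\zeta$, and this is a multiple of the critical constant $J_c$ from~\eqref{eq:ef}: after the $\tau$-rescaling $K_\tau(\zeta)=\tau^{-p/\beta}K_1(\zeta\tau^{-1/\beta})$ this integral equals $\tau^{-(p-d-1)/\beta}\int_{\R^d}|\zeta_i|K_1(\zeta)\d\zeta = \tau^{-1}\cdot(\text{const})$ when $\beta=p-d-1$, so it diverges as $\tau\to0$. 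That divergence is exactly what makes the ``$-1$'' harmless; the subtracted terms are what must be controlled.

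First I would estimate the two subtracted integrals from above. On the interval $(s^-,s)$ the point $s$ is (by definition of $s^-$) the nearest boundary point of $E_{t_i^\perp}$ above any $u\in(s^-,s)$, so for $u\in(s^-,s)$ and $\rho>0$ the indicator $|\chi_{E_{t_i^\perp}}(u+\rho)-\chi_{E_{t_i^\perp}}(u)|$ vanishes unless $u+\rho\geq s$, i.e. unless $\rho\geq s-u$. Hence
\[
\int_{s^-}^s\!\int_0^{+\infty}|\chi_{E_{t_i^\perp}}(u+\rho)-\chi_{E_{t_i^\perp}}(u)|\widehat K_\tau(\rho)\d\rho\du \le \int_{s^-}^s\!\int_{s-u}^{+\infty}\widehat K_\tau(\rho)\d\rho\du \le \int_0^{s-s^-}\!\int_{r}^{+\infty}\widehat K_\tau(\rho)\d\rho\,\mathrm{d}r,
\]
and since $\widehat K_\tau(\rho)\leq \widehat K_\tau(0)$, one also has the crude bound by $\int_0^{\infty}\!\int_r^\infty\widehat K_\tau$. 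The point is to compare $\int_\R|\zeta_i|\widehat K_\tau = \int_0^\infty \rho\,\widehat K_\tau(\rho)\d\rho \cdot 2$ with the ``defect'' integral restricted to $\rho\le s-s^-$: writing $\int_0^\infty\rho\,\widehat K_\tau = \int_0^\infty\int_0^\rho \d r\,\widehat K_\tau(\rho)\d\rho = \int_0^\infty\int_r^\infty\widehat K_\tau(\rho)\d\rho\,\mathrm{d}r$ by Fubini, the full integral $\int_\R|\zeta_i|\widehat K_\tau$ is twice the total mass of the function $r\mapsto\int_r^\infty\widehat K_\tau$, and the subtracted term eats only the mass on $r\le s-s^-$ (and similarly $r\le s^+-s$). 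What is left over is $\int_\R|\zeta_i|\widehat K_\tau - (\text{defect}) \ge \int_{s-s^-}^\infty\!\int_r^\infty\widehat K_\tau\,\mathrm{d}r + \int_{s^+-s}^\infty\!\int_r^\infty\widehat K_\tau\,\mathrm{d}r$, so $r_{i,\tau}\geq -1 + \Psi_\tau(s-s^-) + \Psi_\tau(s^+-s)$ where $\Psi_\tau(a) := \int_a^\infty\int_r^\infty\widehat K_\tau(\rho)\d\rho\,\mathrm{d}r$. Then I would compute $\Psi_\tau$ explicitly from $\widehat K_\tau(\rho) = \tau^{-p/\beta}\widehat K_1(\rho\tau^{-1/\beta})$ with $\widehat K_1(\rho) = \int_{\R^{d-1}}(\|\rho e_i+\xi\|_1+1)^{-p}\d\xi$; changing variables gives $\widehat K_1(\rho)\sim C_1(|\rho|+1)^{-(p-d+1)} = C_1(|\rho|+1)^{-(q-1)}$ with $q = p-d+2$, so $\Psi_1(a) = \frac{C_1}{(q-2)(q-3)}(a+1)^{-(q-3)}$... — here I must be careful with the exponent bookkeeping; the paper's stated constants are $C_1$ and $C_2 = \frac{1}{(q-1)(q-2)}$ and the claimed decay is $|s-s^\pm|^{-\beta}$ with $\beta = p-d-1 = q-3$, which matches a double antiderivative of a $(q-1)$-decaying kernel, so the computation will land on~\eqref{eq:stimamax1_eq} after inserting the $\tau$-scaling which produces the $\min(\cdot^{-\beta},\tau^{-1})$ truncation (the $\tau^{-1}$ cap coming from evaluating $\Psi_\tau$ at scales below $\tau^{1/\beta}$, where the rescaled kernel is bounded). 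Finally, the ``in particular'' sign statement: if $\min(|s-s^-|,|s^+-s|)<\eta_0$ then one of the two $\min(\cdot^{-\beta},\tau^{-1})$ terms in~\eqref{eq:stimamax1_eq} is at least $C_1C_2\min(\eta_0^{-\beta},\tau^{-1})$, which exceeds $1$ once $\eta_0$ is chosen small enough and $\tau<\tau_0$ small enough that $\tau^{-1}>\eta_0^{-\beta}$, so $r_{i,\tau}>0$.

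The main obstacle I anticipate is the bookkeeping in the reduction of the two-dimensional (in $(u,\rho)$) defect integrals to the clean one-variable quantity $\Psi_\tau$, and in particular justifying that the full first-moment $\int_\R|\zeta_i|\widehat K_\tau$ is \emph{exactly} the total mass available to be cancelled — this requires the Fubini rewriting above plus the geometric fact that, because $s^-$ and $s^+$ are the \emph{consecutive} boundary points around $s$, the indicator on $(s^-,s)\times(0,\infty)$ is supported in $\{\rho\ge s-u\}$ and similarly on the other side, with no overlap in the relevant ranges. A secondary technical point is the $\tau$-scaling of $\widehat K_\tau$ and verifying that the truncation at $\tau^{-1}$ (rather than a blow-up) is what one gets: this follows because $\widehat K_\tau(\rho) = \tau^{-p/\beta}\widehat K_1(\rho\tau^{-1/\beta})$ is bounded by $\tau^{-p/\beta}\widehat K_1(0)$, so $\Psi_\tau(a)\lesssim \tau^{-p/\beta}\cdot a^2\cdot\tau^{-?}$... — in fact it is cleanest to split $\Psi_\tau(a) = \Psi_\tau(\max(a,\tau^{1/\beta}))$ plus a bounded correction and evaluate each regime separately, which is exactly the source of the $\min$ in~\eqref{eq:stimamax1_eq}. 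None of this is conceptually deep, but the constants must be tracked with care to match the precise form stated, and this is essentially the content of Remark~7.1 in~\cite{dr_arma}, so I would follow that computation.
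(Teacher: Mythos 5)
Your plan follows the natural route and matches the computation the paper cites from Remark~7.1 of \cite{dr_arma}: use that $s^-,s,s^+$ are consecutive boundary points to conclude the indicator $|\chi_{E_{t_i^\perp}}(u+\rho)-\chi_{E_{t_i^\perp}}(u)|$ vanishes for $\rho< s-u$ (resp.\ $|\rho|<u-s$), rewrite the first moment via Fubini as $\int_0^\infty\rho\,\widehat K_\tau(\rho)\d\rho=\int_0^\infty\int_r^\infty\widehat K_\tau(\rho)\d\rho\,\d r$, cancel, and land on $r_{i,\tau}\geq -1+\Psi_\tau(s-s^-)+\Psi_\tau(s^+-s)$ with $\Psi_\tau(a):=\int_a^\infty\int_r^\infty\widehat K_\tau(\rho)\d\rho\,\d r$. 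That is the correct skeleton and is essentially the same argument as the source.

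The bookkeeping at the end does not close as written, for three reasons. (i) The $\tau$-rescaling of the sliced kernel picks up a $(d-1)$-dimensional Jacobian, so $\widehat K_\tau(\rho)=\tau^{-(p-d+1)/\beta}\widehat K_1(\rho\tau^{-1/\beta})$, not $\tau^{-p/\beta}\widehat K_1(\rho\tau^{-1/\beta})$; a cleaner route is $K_\tau(\zeta)=(\|\zeta\|_1+\tau^{1/\beta})^{-p}$, which gives $\widehat K_\tau(\rho)=C_1(|\rho|+\tau^{1/\beta})^{-(p-d+1)}$ directly. (ii) With $\widehat K_1(\rho)=C_1(|\rho|+1)^{-(p-d+1)}$ the double antiderivative is $\Psi_1(a)=\tfrac{C_1}{(p-d)(p-d-1)}(a+1)^{-(p-d-1)}$, which equals $C_1C_2(a+1)^{-\beta}$ only with the convention $q=p-d+1$ (so $\beta=q-2$). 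You set $q=p-d+2$, correctly compute the constant $\tfrac{1}{(q-2)(q-3)}$ in your own convention, but then assert it matches $C_2=\tfrac{1}{(q-1)(q-2)}$, which it does not under your $q$. (iii) After the corrected scaling one obtains the exact identity $\Psi_\tau(a)=C_1C_2(a+\tau^{1/\beta})^{-\beta}$, and this is $\leq$, not $\geq$, $C_1C_2\min(a^{-\beta},\tau^{-1})$, since $a+\tau^{1/\beta}\geq\max(a,\tau^{1/\beta})$. So the claim that the truncation ``lands on \eqref{eq:stimamax1_eq}'' is not right: what the computation actually yields is the shifted form $r_{i,\tau}\geq -1+C_1C_2\bigl[(s-s^-+\tau^{1/\beta})^{-\beta}+(s^+-s+\tau^{1/\beta})^{-\beta}\bigr]$, which is weaker than \eqref{eq:stimamax1_eq} by up to a factor $2^\beta$. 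This is harmless for the lemma's positivity conclusion, because $(a+\tau^{1/\beta})^{-\beta}\geq 2^{-\beta}\min(a^{-\beta},\tau^{-1})$, so with $\eta_0$ chosen so that $2^{-\beta}C_1C_2\eta_0^{-\beta}>1$ and $\tau_0=\eta_0^{\beta}$ one still gets $r_{i,\tau}>0$; but you should either record the shifted form of the bound or explicitly track the $2^{-\beta}$ and absorb it into the choice of $\eta_0$ rather than asserting \eqref{eq:stimamax1_eq} with the stated constants falls out of $\Psi_\tau$.
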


%%%

When dealing with one-dimensional optimizations which are independent of the fact that the one-dimensional sets to which they are applied are slices of a $d$-dimensional set, we will use the following one-dimensional analogue of~\eqref{eq:ritau}. Given $E\subset \R$  a set of locally finite perimeter and let $s^-, s,s^+\in \partial E$, one defines
\begin{equation}\label{eq:rtau1D}
	\begin{split}
		r_{\tau}(E,s) := -1 & + \int_\R |\rho| \widehat{K}_{\tau}(\rho)\d\rho  -  \int_{s^-}^{s} \int_0^{+\infty}  |\chi_{E}(\rho+ u) - \chi_{E}(u)| \widehat{K}_{\tau} (\rho)\d\rho  \du \\ & - \int_{s}^{s^+} \int_{-\infty}^0  |\chi_{E}(\rho+ u) - \chi_{E}(u)| \widehat{K}_{\tau} (\rho)\d\rho  \du. 
	\end{split}
\end{equation}

The quantities defined in~\eqref{eq:ritau} and~\eqref{eq:rtau1D} are related via $r_{i,\tau}(E,t^\perp_i,s) = r_{\tau}(E_{t^\perp_{i}},s)$.

In the next Lemma we recall Lemma 7.5 in~\cite{dr_arma}, containing  a lower bound for the first term of the decomposition~\eqref{eq:decomposition} as $\tau\to0$. Thanks to the inequality \eqref{eq:gstr5} the penalization of close boundary points for a family of  sets $E_\tau$ is preserved in the limit as $\tau\to0$.

\begin{lemma}
	\label{lemma:technicalBeforeLocalRigidity}
	Let $E_{0}, \{E_{\tau}\}\subset \R$  be a family of sets of locally finite perimeter and $I\subset \R$ be an open bounded interval.   Moreover, assume that $E_{ \tau}\to E_{0}$ in $L^1(I)$.  
	If we denote by $\{k^{0}_{1},\ldots,k^{0}_{m_{0}}\} = \partial E_{0}\cap I $, then
	\begin{equation}
		\label{eq:gstr5}
		\liminf_{\tau\downarrow 0}\sum_{\substack{s\in \partial E_{\tau}\\ s\in I}}r_{\tau}(E_{\tau},s) \geq \sum_{i=1}^{m_{0}-1}(-1 + {C_{1}C_{2}}|k^{0}_{i} - k^{0}_{i+1} |^{-1}),
	\end{equation}
	where $r_{\tau}$ is defined in~\eqref{eq:rtau1D}.
\end{lemma}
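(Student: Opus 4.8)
Here is how I would attack Lemma~\ref{lemma:technicalBeforeLocalRigidity}.

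The assertion is a lower‑semicontinuity ($\Gamma$-$\liminf$) inequality for the one‑dimensional energy $E\mapsto\sum_{s\in\partial E\cap I}r_\tau(E,s)$ along $L^1(I)$-convergent families as $\tau\downarrow0$: the right‑hand side is the value on the limit set $E_0$ of the ``$\tau=0$'' energy in which only the nearest‑neighbour interaction between consecutive boundary points is retained. The proof I have in mind uses three facts. First, the pointwise bound \eqref{eq:stimamax1_eq} of Lemma~\ref{rmk:stimax1}, which replaces each $r_\tau(E_\tau,s)$ by $-1$ plus a nonnegative ``nearest‑neighbour profile'' depending only on the distances to the two adjacent boundary points, a profile that already encodes the power‑law tail of $\widehat{K}_\tau$ and has a continuous, decreasing $\tau\to0$ limit that blows up at $0$. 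Second, the companion statement of Lemma~\ref{rmk:stimax1}: boundary points of $E_\tau$ at mutual distance below $\eta_0$ carry $r_\tau(E_\tau,s)>0$, so such ``clustered'' points may be dropped from the sum without lowering the $\liminf$ (this is the statement, recalled before the lemma, that the penalisation of close boundary points survives the limit). Third, the rigidity of the one‑dimensional gap structure under $L^1$ convergence.

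First I would pass to a subsequence realising the $\liminf$, assumed finite (otherwise there is nothing to prove). Writing $\partial E_\tau\cap I=\{s_1^\tau<\dots<s_{m_\tau}^\tau\}$ and $\ell_j^\tau:=s_{j+1}^\tau-s_j^\tau$, and using the identity $r_{i,\tau}(E,t_i^\perp,s)=r_\tau(E_{t_i^\perp},s)$ together with \eqref{eq:stimamax1_eq} applied to each summand, a regrouping by gap gives
\[
\sum_{s\in\partial E_\tau\cap I}r_\tau(E_\tau,s)\ \geq\ \sum_{j=1}^{m_\tau-1}\psi_\tau(\ell_j^\tau)+\bigl(\text{contributions of the two gaps of }E_\tau\text{ meeting }\partial I\bigr),
\]
where $\psi_\tau$ is, up to an explicit positive constant, the nearest‑neighbour profile of \eqref{eq:stimamax1_eq}, and the two end contributions have the same form with gap lengths staying bounded. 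Setting $\psi_0:=\lim_{\tau\to0}\psi_\tau$ (pointwise on $(0,\infty)$), $\psi_0$ is continuous, strictly decreasing, and $+\infty$ at $0$.

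Now fix a small $\eta\le\eta_0$. By Lemma~\ref{rmk:stimax1} every gap $\ell_j^\tau<\eta$ has both endpoints carrying $r_\tau>0$, so such gaps may be discarded for $\tau$ small; it remains to estimate $\sum_{j:\,\ell_j^\tau\ge\eta}\psi_\tau(\ell_j^\tau)$ (plus the bounded end contributions). Here I would invoke $E_\tau\to E_0$ in $L^1(I)$: since $E_0$ is constant on each of the $m_0+1$ pieces into which $\{k_1^0,\dots,k_{m_0}^0\}$ cuts $I$, for $\tau$ small $E_\tau$ cannot split any such piece --- nor a macroscopic piece of its complement --- into two parts of length $\ge\eta$ without misplacing an $L^1(I)$-mass $\ge\eta$; consequently, for $\tau$ small $E_\tau$ has exactly $m_0-1$ gaps $\ell_j^\tau\ge\eta$ in $I$, one inside each gap $(k_i^0,k_{i+1}^0)$ of $E_0$, with $\ell_{j(i)}^\tau\to|k_i^0-k_{i+1}^0|$, while all other boundary points of $E_\tau$ sit in sub‑$\eta$ clusters (already discarded) and the outermost points land $o(1)$-close to $k_1^0,k_{m_0}^0$. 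Passing to the limit via continuity of $\psi_0$ yields $\sum_{j:\,\ell_j^\tau\ge\eta}\psi_\tau(\ell_j^\tau)\to\sum_{i=1}^{m_0-1}\psi_0(|k_i^0-k_{i+1}^0|)$, and, after carrying the regrouping constant and the two end contributions (which together supply the correct count of $-1$'s), this is exactly the claimed inequality $\liminf_{\tau\downarrow0}\sum_s r_\tau(E_\tau,s)\ge\sum_{i=1}^{m_0-1}\bigl(-1+C_1C_2|k_i^0-k_{i+1}^0|^{-1}\bigr)$.

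The step I expect to be the main obstacle is this rigidity: ruling out that an $L^1$-convergent family $E_\tau$ develops macroscopic oscillations on the scale of the gaps of $E_0$, or a cloud of intermediate‑scale gaps whose total $\psi_0$-weight could undershoot $\psi_0$ of the single limiting gap. These are excluded by two complementary mechanisms: $L^1$ convergence forbids rearranging mass above scale $\eta$, while below scale $\eta$ the boundary points are clustered and hence penalised ($r_\tau>0$) by Lemma~\ref{rmk:stimax1}, with the $\min(\cdot,\tau^{-1})$ truncation in \eqref{eq:stimamax1_eq} keeping the borderline contributions nonnegative. The remaining delicate points --- the two gaps meeting $\partial I$, and the precise propagation of the constants and of the number of $-1$'s through the regrouping so as to land on the stated right‑hand side --- are routine but must be tracked, since that is where the exact form of the bound comes from.
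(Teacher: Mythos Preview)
The paper does not prove this lemma; it merely recalls it from \cite{dr_arma}, so there is no proof here to compare against. I will therefore comment on the soundness of your sketch.

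Your overall strategy --- reduce to a subsequence with finite $\liminf$, use the pointwise bound \eqref{eq:stimamax1_eq}, control the gap structure via $L^1$ convergence, and pass to the limit --- has the right shape, but there are two genuine gaps.

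First, the claim that for $\tau$ small $E_\tau$ has \emph{exactly} $m_0-1$ gaps of length $\ge\eta$ is false under $L^1$ convergence alone. Take $I=(-1,2)$, $E_0=(0,1)$ (so $m_0=2$), and $E_\tau=(0,\tfrac12-\varepsilon_\tau)\cup(\tfrac12+\varepsilon_\tau,1)$ with $\varepsilon_\tau\downarrow0$: then $E_\tau\to E_0$ in $L^1(I)$, yet $E_\tau$ has \emph{two} gaps of length $\approx\tfrac12$ and one gap of length $2\varepsilon_\tau$. Your $L^1$ heuristic (``misplacing mass $\ge\eta$'') does not exclude this; what does is the finite-$\liminf$ assumption, since the tiny gap forces $\sum_s r_\tau\to+\infty$ via \eqref{eq:stimamax1_eq}. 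The correct order is: finite $\liminf$ $\Rightarrow$ all gaps of $E_\tau$ in $I$ stay bounded below by some $\eta^*>0$ $\Rightarrow$ $m_\tau$ is bounded $\Rightarrow$ along a further subsequence $m_\tau\equiv m$ and $s_j^\tau\to s_j^*$ with the $s_j^*$ distinct; only then does $L^1$ convergence force $\{s_j^*\}=\{k_i^0\}$ and hence $m=m_0$.

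Second --- and this is the more serious point --- even once $m_\tau=m_0$ and $s_j^\tau\to k_j^0$, summing \eqref{eq:stimamax1_eq} over the $m_0$ boundary points gives
\[
\sum_{j=1}^{m_0} r_\tau(E_\tau,s_j^\tau)\ \ge\ -m_0+2\sum_{j=1}^{m_0-1}\phi_\tau(\ell_j^\tau)+\phi_\tau(\ell_0^\tau)+\phi_\tau(\ell_{m_0}^\tau),
\qquad \phi_\tau(\ell):=C_1C_2\min(\ell^{-\beta},\tau^{-1}),
\]
which in the limit yields at best $-m_0+2C_1C_2\sum_j|k_j^0-k_{j+1}^0|^{-\beta}$. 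The target, however, carries only $-(m_0-1)$ as its constant part. The ``two end contributions'' you invoke are precisely the nonnegative terms $\phi_\tau(\ell_0^\tau)+\phi_\tau(\ell_{m_0}^\tau)$ coming from the gaps of $E_\tau$ that exit $I$; since the lemma places no constraint on $E_\tau$ outside $I$, these can be made arbitrarily small (send the nearest exterior boundary point to infinity), so they cannot supply the missing $+1$. The bookkeeping you label ``routine'' therefore fails: the nearest-neighbour lower bound \eqref{eq:stimamax1_eq} is one unit too crude to reach the stated right-hand side. To close this one has to return to the integral definition \eqref{eq:rtau1D} and pass to the limit there --- using $E_\tau\to E_0$ in $L^1$ together with Fatou/monotone convergence for $\widehat K_\tau\uparrow\widehat K_0$ to show the subtracted interaction integrals are upper semicontinuous --- rather than filter everything through the lossy profile of \eqref{eq:stimamax1_eq}.
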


The next proposition contains the main symmetry breaking result at mesoscopic scale $l$: on a square of size $l$, if $\tau$ is  sufficiently close to $0$, a bound on the energy corresponds to a bound on the $L^1$-distance to the unions of stripes.  
It corresponds to Lemma 7.6 in~\cite{dr_arma}. In the limit as $\tau\to0$, for any $p\geq d+2$  sets of bounded energy have to be exactly stripes, via a rigidity argument that uses \eqref{eq:gstr5} and a lower bound on the cross interaction term $w_\tau$.

\begin{proposition}[Local Rigidity] %---{{{
	\label{lemma:local_rigidity_alpha}
	For every $M > 1,l,\delta > 0$, there exist $\tau_1>0$ and $\bar{\eta} >0$ %($\bar{\tau}$ and $\bar{\eta}$ independent of $L$)
	such that whenever $0<\tau< {\tau}_1$  and $\bar F_{\tau}(E,Q_{l}(z)) < M$ for some $z\in [0,L)^d$ and $E\subset\R^d$ $[0,L)^d$-periodic, with $L>l$, then it holds $D_{\eta}(E,Q_{l}(z))\leq\delta$ for every $\eta < \bar{\eta}$. Moreover $\bar{\eta}$ can be chosen independently  of $\delta$.  Notice that ${\tau}_1$ and $\bar{\eta}$ are independent of $L$.
\end{proposition}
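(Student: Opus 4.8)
The plan is a compactness-and-rigidity argument in the limit $\tau\to0$, localized on the fixed cube. Fix $M>1$ and $l>0$. Since $\Acal^i_\eta\supseteq\Acal^i_{\bar\eta}$ whenever $\eta<\bar\eta$, one has $D_\eta(E,Q)\le D_{\bar\eta}(E,Q)$, so it suffices to produce $\bar\eta>0$ and $\tau_1>0$ such that $\bar F_\tau(E,Q_l(z))<M$ forces $D_{\bar\eta}(E,Q_l(z))\le\delta$. I would first choose $\bar\eta$, independently of $\delta$, as a lower bound for the interface spacing of any limiting configuration of bounded energy (essentially the $\eta_0$ of Lemma~\ref{rmk:stimax1}); only afterwards do I fix $\tau_1=\tau_1(M,l,\delta)$. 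Suppose the conclusion fails for this $\bar\eta$: there are $\tau_k\downarrow0$, periods $L_k>l$, points $z_k$ and $[0,L_k)^d$-periodic sets $E_k$ with $\bar F_{\tau_k}(E_k,Q_l(z_k))<M$ but $D_{\bar\eta}(E_k,Q_l(z_k))>\delta$. By translation invariance of the functional and of $D_{\bar\eta}$ I may take $z_k=0$; write $Q:=Q_l(0)$. Everything below involves only the behaviour of $E_k$ near $Q$ and never the value of $L_k$ (as long as $L_k>l$), so the resulting $\tau_1,\bar\eta$ are independent of $L$.

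\textbf{Compactness.} Because $v_{i,\tau_k},w_{i,\tau_k}\ge0$, the bound $\bar F_{\tau_k}(E_k,Q)<M$ yields $\sum_{i=1}^d\frac1{l^d}\int_{Q^\perp_l(0)}\sum_{s}r_{i,\tau_k}(E_k,t_i^\perp,s)\,\dt_{i}^{\perp}<M$. By the estimate~\eqref{eq:stimamax1_eq}, for $\tau_k$ small the sum $\sum_s r_{i,\tau_k}(E_k,t_i^\perp,s)$ over a slice carrying $N$ interfaces in $Q$ grows superlinearly in $N$ (interfaces crowding into a bounded interval produce arbitrarily large positive $r$-energy); hence $\int_{Q^\perp}N(t_i^\perp)\,\dt_{i}^{\perp}$, and with it $\per_1(E_k,Q)$, is bounded uniformly in $k$. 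Up to a subsequence, $\chi_{E_k}\to\chi_{E_0}$ in $L^1(Q)$ for some set $E_0$ of finite perimeter, and for a.e. $t_i^\perp$ the slice $(E_k)_{t_i^\perp}\to(E_0)_{t_i^\perp}$ in $L^1$.

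\textbf{Passage to the limit and rigidity.} Applying Fatou in the transverse variable together with Lemma~\ref{lemma:technicalBeforeLocalRigidity} to the $r$-term, and lower semicontinuity (together with the concentration of $K_{\tau_k}$ at scale $\tau_k^{1/\beta}\to0$) to the nonnegative $v$- and $w$-terms, one obtains that $E_0$ has finite limiting energy on $Q$; in particular the limiting cross-interaction term is finite. This is exactly the situation of the rigidity analysis of~\cite{gr,dr_arma}, valid precisely for $p\ge d+2$: finiteness of the limiting energy — via the lower bound~\eqref{eq:gstr5} on the one-dimensional term combined with a lower bound on the cross term $w_\tau$ — forces $E_0$ to coincide on $Q$, up to a null set, with a union of stripes oriented along a single coordinate direction $e_i$, since interfaces transverse to two distinct directions on sets of positive measure make the limiting cross term diverge. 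Moreover finiteness of the one-dimensional term forces the interfaces of $E_0$ to be spaced by at least a universal $\eta_0>0$; choosing $\bar\eta\le\eta_0$ we get that $E_0\cap Q$ equals (the restriction of) a set in $\Acal^i_{\bar\eta}$, hence $D_{\bar\eta}(E_k,Q)\le\frac1{l^d}\int_Q|\chi_{E_k}-\chi_{E_0}|\to0$, contradicting $D_{\bar\eta}(E_k,Q)>\delta$. This proves the proposition.

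\textbf{Main obstacle.} The delicate point is the rigidity step: showing that a limiting configuration carrying interfaces transverse to two coordinate directions on a positive-measure set has infinite energy. This is where the threshold $p\ge d+2$ ($\beta\ge1$) enters, through the interplay of the $-1$ term, the polynomial kernel $\tfrac1{(\|\zeta\|_1+1)^p}$ and the $\tfrac2d$-weighted cross term $w_{i,\tau}$ in the decomposition~\eqref{eq:decomposition}; one must also control the slowly decaying nonlocal tails of $K_\tau$ when localizing the argument on $Q_l(z)$, since $K_\tau$ is not compactly supported.
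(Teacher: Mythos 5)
Your proposal is correct and follows essentially the same compactness-and-rigidity approach the paper sketches (deferring the details to Lemma 7.6 of \cite{dr_arma}): pass to the limit $\tau\to0$ via Lemma~\ref{lemma:technicalBeforeLocalRigidity} and the $\Gamma$-convergence of Corollary~\ref{cor:gammaconv}, then use rigidity of the limiting cross-interaction term, valid for $p\geq d+2$, to conclude the limit set is a union of well-spaced stripes. One minor imprecision worth noting: the lower bound on the interface spacing of the limiting configuration $E_0$ obtained from the energy bound $M$ on $Q_l$ depends on $M$ and $l$ rather than being the universal $\eta_0$ of Lemma~\ref{rmk:stimax1}, but this is harmless since the proposition only requires $\bar\eta$ to be independent of $\delta$ and $L$.
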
 %---}}}

In particular, one has the following $\Gamma$-convergence result.
\begin{corollary}\label{cor:gammaconv}
	Let $0<\tau\ll1$. One has that the following holds:
	\begin{itemize}
		\item Let $\{E_{\tau}\}$ be a sequence such that $\sup_{\tau} \bar{F}_{ \tau}(E_{\tau}, Q_l(z)) < \infty$. 
		Then  as $tau\to0$ the sets $E_{\tau}$ converge in $L^1$  up to  subsequences to some set $E_{0}$ of finite perimeter and 
		\begin{align}
			\liminf_{\tau \rightarrow 0} \bar{F}_{ \tau}(E_{\tau}, Q_l(z)) \geq \bar{F}_{0}(E_{0}, Q_l(z)) . 
			\label{eq:liminfLocalGamma}
		\end{align}
		\item For every set $E_{0}$ with $\bar{F}_{0}(E_{0}, Q_l(z)) < + \infty$, there exists a sequence $\{E_{\tau}\}$ converging in $L^1$ to $E_{0}$ as $\tau\to0$ and such that
		\begin{align}
			\limsup_{\tau \rightarrow 0} \bar{F}_{ \tau}(E_{\tau}, Q_l(z)) = \bar{F}_{0}(E_{0}, Q_l(z)) . 
		\end{align} 
	\end{itemize}
\end{corollary}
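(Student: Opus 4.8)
The statement to prove is Corollary~\ref{cor:gammaconv}, a $\Gamma$-convergence result for the localized functionals $\bar F_{\tau}(\cdot,Q_l(z))$ as $\tau\to 0$. The plan is to deduce it directly from the preliminary lemmas already assembled in this section, treating the liminf and limsup parts separately.

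\textbf{Compactness and the liminf inequality.}
First I would establish compactness. If $\sup_\tau \bar F_\tau(E_\tau,Q_l(z))<\infty$, then by the definition \eqref{eq:fbartau} of $\bar F_\tau$, each of the slice contributions $\sum_s (v_{i,\tau}+r_{i,\tau})$ and the cross term $\int w_{i,\tau}$ is controlled; in particular Proposition~\ref{lemma:local_rigidity_alpha} (Local Rigidity) applies with $M=\sup_\tau \bar F_\tau(E_\tau,Q_l(z))+1$, giving $D_\eta(E_\tau,Q_l(z))\leq\delta$ for all small $\eta$ once $\tau$ is small. Since the uniform energy bound also controls the $1$-perimeter of $E_\tau$ inside $Q_l(z)$ up to a dimensional constant (the negative perimeter term and the positive nonlocal terms combine to bound $\per_1(E_\tau,Q_l(z))$), we get precompactness in $L^1_{\mathrm{loc}}$ by the standard BV compactness theorem, so $E_\tau\to E_0$ in $L^1(Q_l(z))$ up to a subsequence, with $E_0$ of finite perimeter. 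For the liminf inequality I would pass to the limit term by term in the decomposition underlying \eqref{eq:fbartau}: for the $r_{i,\tau}$ slices I use Lemma~\ref{lemma:technicalBeforeLocalRigidity} (Fatou in $t_i^\perp$ plus \eqref{eq:gstr5} on each slice); for the cross term $w_{i,\tau}$, since $K_\tau\to$ a limiting kernel and $f_E\geq 0$, one gets lower semicontinuity by Fatou together with $L^1$ convergence of $\chi_{E_\tau}$; and the $v_{i,\tau}$ terms are nonnegative, so they only help. Summing over $i$ and invoking the fact that equality in the chain of inequalities holds for the limit functional $\bar F_0$ gives \eqref{eq:liminfLocalGamma}.

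\textbf{The limsup inequality.}
For the recovery sequence, given $E_0$ with $\bar F_0(E_0,Q_l(z))<\infty$ I would simply take the constant sequence $E_\tau\equiv E_0$. Then $E_\tau\to E_0$ trivially in $L^1$, and I must check $\limsup_{\tau\to0}\bar F_\tau(E_0,Q_l(z))=\bar F_0(E_0,Q_l(z))$. This is a matter of dominated convergence: as $\tau\downarrow 0$ the rescaled kernel $K_\tau(\zeta)=\tau^{-p/\beta}K_1(\zeta\tau^{-1/\beta})$ and hence $\widehat K_\tau$ converge (in the appropriate sense, with $\int|\rho|\widehat K_\tau(\rho)\,d\rho$ and all the relevant moments converging to their $\tau=0$ values by the explicit formulas \eqref{eq:enform1}--\eqref{eq:enform2} and the finiteness of $C_1$), so each of $r_{i,\tau}, v_{i,\tau}, w_{i,\tau}$ converges pointwise to $r_{i,0}, v_{i,0}, w_{i,0}$, and the hypothesis $\bar F_0(E_0,Q_l(z))<\infty$ together with the finite perimeter of $E_0$ provides the integrable dominating function needed to pass the limit inside the $\int_{Q^\perp_l}\sum_s(\cdot)\,dt_i^\perp$ and $\int_{Q_l}(\cdot)\,dt$ integrals. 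Since for a fixed set the expression \eqref{eq:fbartau} is continuous in $\tau$ at $\tau=0$ in this sense, the limsup (indeed the limit) equals $\bar F_0(E_0,Q_l(z))$.

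\textbf{Main obstacle.}
The routine parts are the BV compactness and the dominated-convergence bookkeeping; the genuinely delicate point is the liminf inequality for the $r_{i,\tau}$ slice term, because the number of boundary points $\#(\partial E_{\tau,t_i^\perp}\cap Q^i_l)$ can blow up as $\tau\to 0$ while the summands $r_\tau$ can individually tend to $-1$, so there is a nontrivial cancellation to control. This is exactly what Lemma~\ref{lemma:technicalBeforeLocalRigidity} is designed to handle on a single slice; the remaining work is to upgrade it to the two-variable integral by a Fatou argument in $t_i^\perp$, checking that the slices $E_{\tau,t_i^\perp}\to E_{0,t_i^\perp}$ in $L^1$ for a.e.\ $t_i^\perp$ along the chosen subsequence (which follows from $L^1(Q_l(z))$ convergence and Fubini, after passing to a further subsequence). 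Once that is in place, combining with the nonnegativity of the $v$-terms and the lower semicontinuity of the $w$-terms and summing over $i\in\{1,\dots,d\}$ closes the argument.
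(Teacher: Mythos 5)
The paper itself gives no explicit proof of this corollary: it is introduced with ``In particular\ldots'' as a direct consequence of Lemma~\ref{rmk:stimax1}, Lemma~\ref{lemma:technicalBeforeLocalRigidity} and Proposition~\ref{lemma:local_rigidity_alpha} (the argument is carried out in detail in~\cite{dr_arma}). Your high--level strategy --- compactness from the uniform bound, $\liminf$ from Lemma~\ref{lemma:technicalBeforeLocalRigidity} on slices plus Fatou in $t_i^\perp$ plus nonnegativity of $v_{i,\tau},w_{i,\tau}$, and a constant recovery sequence --- is indeed the intended one, so in broad outline you are on target.

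There is, however, a genuine gap in the way you argue the $\limsup$ step. You write that ``$\int|\rho|\widehat K_\tau(\rho)\,d\rho$ and all the relevant moments converge to their $\tau=0$ values,'' and then invoke dominated convergence term by term in $r_{i,\tau}$. This is not correct. Since $K_\tau(\zeta)=(\|\zeta\|_1+\tau^{1/\beta})^{-p}$, one has $\widehat K_\tau(\rho)=C_1(|\rho|+\tau^{1/\beta})^{-q}$ with $q=p-d+1\geq3$, and hence
\begin{equation*}
\int_\R|\rho|\,\widehat K_\tau(\rho)\,d\rho=\frac{2C_1}{(q-1)(q-2)}\,\tau^{(2-q)/\beta}\ \longrightarrow\ +\infty\quad\text{as }\tau\downarrow0.
\end{equation*}
So the three pieces making up $r_\tau(E,s)$ in \eqref{eq:rtau1D} individually blow up, and dominated convergence cannot be applied to them separately. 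One must first exploit the cancellation: rewrite $\int|\rho|\widehat K_\tau=2\int_0^\infty\int_u^\infty\widehat K_\tau(\rho)\,d\rho\,du$, subtract from it the two double integrals over $(s^-,s)\times(0,\infty)$ and $(s,s^+)\times(-\infty,0)$, and observe that after this cancellation what remains involves only $\int_a^\infty\int_u^\infty\widehat K_\tau(\rho)\,d\rho\,du$ with $a>0$ bounded away from zero. On these quantities the monotone convergence $\widehat K_\tau\uparrow\widehat K_0$ does yield $r_\tau(E_0,s)\to r_0(E_0,s)$, and then the constant sequence works. Citing \eqref{eq:enform1}--\eqref{eq:enform2} here is beside the point: those formulas are specific to periodic stripes, not to a general finite--perimeter $E_0$.

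Two minor remarks. First, the sentence ``invoking the fact that equality in the chain of inequalities holds for the limit functional $\bar F_0$'' does not carry content in the $\liminf$ direction; the slice--wise Fatou argument plus Lemma~\ref{lemma:technicalBeforeLocalRigidity} already suffices and the sentence should be dropped. Second, the perimeter bound you invoke for compactness does not come from ``a negative perimeter term'' in $\bar F_\tau$ (there is none); it comes from the quantitative lower bound \eqref{eq:stimamax1_eq} on $r_{i,\tau}$ together with a convexity argument: if a slice has $N$ boundary points in an interval of length $l$, then $\sum_s\min(|s-s^+|^{-\beta},\tau^{-1})\gtrsim N^{1+\beta}l^{-\beta}$, which forces $\sum_s r_{i,\tau}\to+\infty$ as $N\to\infty$. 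That is what converts the uniform energy bound into a slice--wise perimeter bound and hence $L^1$ precompactness. Your conclusion is right, but the stated reason is not.
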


The following local stability proposition corresponds to Lemma 7.8 in~\cite{dr_arma}. Roughly speaking,  it shows that whenever a set $E\subset\R^d$ is $L^1$-close to a set $S$ which is a union of stripes with boundaries orthogonal to  $e_i$ in a certain cube, then it is not energetically convenient for the set $E$ to have non-straight boundaries in direction $e_j$ with $j\neq i$ (namely, to deviate from being exactly stripes with boundaries orthogonal to $e_i$)
Indeed, in such a case either the local contribution given by $r_{i,\tau}$ or the one given by the cross interaction term  $v_{i,\tau}$ are large.

\begin{proposition}[Local Stability]
	\label{lemma:stimaContributoVariazionePiccola}
	Let  $(t^{\perp}_{i}+se_i)\in (\partial E) \cap [0,l)^d$,  and  $\eta_{0}$, $\tau_0$ as in Lemma~\ref{rmk:stimax1}. Then there exist ${\tau_2}\leq\tau_0$ and  $\varepsilon_2$ (independent of $l$) such that for every $0<\tau < {\tau_2}$, and $0<\varepsilon < {\varepsilon_2}$ the following holds: assume that 
	\begin{enumerate}[(a)]
		\item $\min(|s-l|, |s|)> \eta_0$ (i.e. the boundary point $s$ in the slice of $E$ is sufficiently far from the boundary of the cube)%, where $\eta_0$ is given in Remark~\ref{rmk:stimax1}
		\item $D^{j}_{\eta}(E,[0,l)^d)\leq\frac {\varepsilon^d} {16 l^d}$ for some $\eta> 0$ and  with $j\neq i$ (i.e. $E\cap [0,l)^d$ is close to stripes with boundaries orthogonal to $e_j$ for some $j\neq i$)
	\end{enumerate}
	Then 
	\[r_{i,\tau}(E,t^{\perp}_{i},s) + v_{i,\tau}(E,t^{\perp}_{i},s) \geq 0.\] 
\end{proposition}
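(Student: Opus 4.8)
I would distinguish two cases. If $\min(|s-s^-|,|s^+-s|)<\eta_0$, then Lemma~\ref{rmk:stimax1} gives $r_{i,\tau}(E,t^\perp_i,s)>0$ directly; since the integrand in~\eqref{eq:vitau} is nonnegative we also have $v_{i,\tau}(E,t^\perp_i,s)\ge0$, and the conclusion follows. So from now on assume $\min(|s-s^-|,|s^+-s|)\ge\eta_0$; then $E_{t^\perp_i}$ equals a constant $a\in\{0,1\}$ a.e. on $(s-\eta_0,s)$ and equals $1-a$ a.e. on $(s,s+\eta_0)$, and by (a) both intervals are contained in $(0,l)$. In this regime I first claim $r_{i,\tau}(E,t^\perp_i,s)\ge-1$: since $\chi_{E_{t^\perp_i}}$ is constant on $(s^-,s)$ and on $(s,s^+)$, for $u\in(s^-,s)$ and $0<\rho<s-u$ the difference $|\chi_{E_{t^\perp_i}}(u+\rho)-\chi_{E_{t^\perp_i}}(u)|$ vanishes, so the first subtracted integral in~\eqref{eq:ritau} is $\le\int_{s^-}^{s}\int_{s-u}^{+\infty}\widehat K_\tau(\rho)\d\rho\,\du\le\int_0^{+\infty}\rho\,\widehat K_\tau(\rho)\d\rho$ (substitute $v=s-u$, then apply Fubini), and the second is $\le\int_0^{+\infty}\rho\,\widehat K_\tau(\rho)\d\rho$ as well since $\widehat K_\tau$ is even; summing, the subtracted terms are $\le\int_\R|\zeta_i|\widehat K_\tau(\zeta_i)\d\zeta_i$, which is exactly the positive term of~\eqref{eq:ritau}. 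It remains to prove $v_{i,\tau}(E,t^\perp_i,s)\ge1$ for suitable $\tau_2,\varepsilon_2$ depending only on $d,p,\eta_0$.

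Set $x_0=t^\perp_i+se_i$ and $\rho_0:=\eta_0/10$. The jump of $E_{t^\perp_i}$ at $s$ forces the first factor of $f_E$ (see~\eqref{eq:defFE}) to equal $1$ on $G:=\{(u,\zeta_i):u\in(s,s+\eta_0),\ u+\zeta_i\in(s-\eta_0,s)\}$; one has $G\subset(s,s+\eta_0)\times(-2\eta_0,0)$, the $(u,\zeta_i)$-measure of $G$ is $\ge\eta_0^2/2$, and for each $\delta\in(0,\eta_0)$ the section $\{u:(u,-\delta)\in G\}$ is an interval of length $\delta$, so arbitrarily small $|\zeta_i|$ occur on $G$. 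Pick $F\in\Acal^j_\eta$ almost realising $D^j_\eta(E,[0,l)^d)$, so that $\chi_F$ depends only on $x_j$ and $\big|\{x\in[0,l)^d:\chi_E(x)\ne\chi_F(x)\}\big|\le\varepsilon^d/16$. Because $\chi_F$ is constant along $e_i$, denoting by $b$ its value at transverse coordinate $(t^\perp_i)_j$, the upper half-interval $(s,s+\eta_0)$ is the one on which $\chi_{E_{t^\perp_i}}$ differs from $b$ when $b=a$ (the case $b=1-a$ being symmetric, using $(s-\eta_0,s)$ and the reflected set $G$); then for $(u,\zeta_i)\in G$ and for $\zeta^\perp_i$ with $\|\zeta^\perp_i\|_\infty<\rho_0$ such that no wall of $F$ separates $x_0$ from $x_0+\zeta^\perp_i$ and $x_0+(u-s)e_i+\zeta^\perp_i\notin\{\chi_E\ne\chi_F\}$, the second factor of $f_E$ equals $1$. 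If instead $F$ has a wall within $\rho_0$ of $x_0$ in the $e_j$ direction, an analogous, only larger, contribution comes from crossing that wall. In all cases, for $(u,\zeta_i)\in G$ the second factor is $1$ for $\zeta^\perp_i$ ranging over $\{\|\zeta^\perp_i\|_\infty<\rho_0\}$ outside a set whose measure integrates over $u\in(s,s+\eta_0)$ to at most $\varepsilon^d/16$.

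Inserting these observations into~\eqref{eq:vitau}, restricting the $(u,\zeta_i)$-integral to $G\cap\{\zeta_*\le|\zeta_i|\le\rho_0\}$ (with $\zeta_*<\rho_0/2$ to be fixed) and the $\zeta^\perp_i$-integral to $\{\|\zeta^\perp_i\|_\infty<\rho_0\}$, and using $K_\tau(\zeta)=(\|\zeta\|_1+\tau^{1/\beta})^{-p}$ with $\tau^{1/\beta}\le\zeta_*$, one gets
\[
v_{i,\tau}(E,t^\perp_i,s)\ \ge\ c_{d,p}\,\zeta_*^{-\beta}\ -\ C_{d,\eta_0}\,\zeta_*^{-p}\,\varepsilon^d ,
\]
with $c_{d,p}>0$: the main term arises from $\int_{\{\|\zeta^\perp_i\|_\infty<\rho_0\}}(|\zeta_i|+\|\zeta^\perp_i\|_1)^{-p}\d\zeta^\perp_i\gtrsim|\zeta_i|^{-(\beta+2)}$ for $|\zeta_i|\le\rho_0$ (here $p\ge d+2$, so $\|\cdot\|_1^{-p}$ fails to be locally integrable on $\R^{d-1}$), followed by the $u$- and $\zeta_i$-integrations which yield $\gtrsim\zeta_*^{-\beta}$; the error term bounds the contribution of $\{\chi_E\ne\chi_F\}$, on which $K_\tau\le|\zeta_i|^{-p}\le\zeta_*^{-p}$. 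Choosing $\zeta_*:=\min\{\eta_0/20,(c_{d,p}/2)^{1/\beta}\}$, then $\varepsilon_2$ with $C_{d,\eta_0}\zeta_*^{-p}\varepsilon_2^d\le1$, then $\tau_2:=\zeta_*^\beta$, we get $v_{i,\tau}(E,t^\perp_i,s)\ge2-1=1$ for $\tau<\tau_2$, $\varepsilon<\varepsilon_2$, hence $r_{i,\tau}+v_{i,\tau}\ge-1+1=0$.

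Finally, the functionals~\eqref{eq:ritau}--\eqref{eq:vitau} do not involve $l$, and $l$ enters only through hypotheses (a)--(b) — used solely to guarantee $(s-\eta_0,s+\eta_0)\subset(0,l)$ and that $E$ differs from $F$ on a set of small \emph{total} measure — so $\tau_2,\varepsilon_2$ (and $\eta_0$) depend only on $d,p$ and not on $l$. The genuinely delicate step is $v_{i,\tau}\ge1$: one must extract a quantity \emph{diverging} as $\tau\to0$ from the two soft inputs (a real $e_i$-jump of $E$ at $x_0$, and $L^1$-proximity of $E$ to $e_j$-stripes) while keeping the error from $\{\chi_E\ne\chi_F\}$ negligible precisely near $\zeta=0$, where $K_\tau$ blows up; the cut-off $\zeta_*\le|\zeta_i|$ is exactly what makes the main term and this error compatible, and one also has to check — using the periodicity of $E$, so that the reference cube may be taken with $x_0$ interior — that the transverse ball $\{\|\zeta^\perp_i\|_\infty<\rho_0\}$ lies inside the region where $|\chi_E-\chi_F|$ is controlled.
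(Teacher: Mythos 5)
Your argument follows the dichotomy that the cited Lemma 7.8 of \cite{dr_arma} is built on: if a neighbouring jump is within $\eta_0$ of $s$ then $r_{i,\tau}>0$ by Lemma~\ref{rmk:stimax1} and $v_{i,\tau}\geq 0$ trivially; otherwise the jump at $s$ is $\eta_0$-isolated, $r_{i,\tau}\geq -1$, and the $L^1$-proximity to $e_j$-stripes forces $v_{i,\tau}\geq 1$. The quantitative core — $\int_{\|\zeta_i^\perp\|_\infty<\rho_0}K_\tau\,\d\zeta_i^\perp\gtrsim|\zeta_i|^{-(\beta+2)}$ (this is where $p\geq d+2$ enters), the triangular $u$-section of $G$ of length $\sim|\zeta_i|$ producing a main term $\gtrsim\zeta_*^{-\beta}$ after cutting off at $\zeta_*\leq|\zeta_i|$, the $\{\chi_E\neq\chi_F\}$-error $\lesssim\zeta_*^{-p}\varepsilon^d$, and finally $\tau_2=\zeta_*^\beta$ — is the right mechanism, as is the bound $r_{i,\tau}\geq-1$.

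Two steps, however, have real gaps. First, the claim that ``for $(u,\zeta_i)\in G$ the second factor is $1$ for $\zeta_i^\perp$ ranging over $\{\|\zeta_i^\perp\|_\infty<\rho_0\}$ outside a set whose measure integrates over $u\in(s,s+\eta_0)$ to at most $\varepsilon^d/16$'' is false when $F$ has a wall at $j$-distance $d_w<\rho_0$ from $x_0$: with $b=a$ and $u>s$, every $\zeta_i^\perp$ on the far side of that wall gives a \emph{vanishing} second factor, and this bad set has size $\sim\eta_0\,\rho_0^{d-1}$, not $O(\varepsilon^d)$. Your ``only larger contribution'' sentence does not fix this for $G$ as defined. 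What does fix it is the symmetrisation you hint at: those $\zeta_i^\perp$ contribute through the reflected half $u\in(s-\eta_0,s)$, where the roles of $a$ and $1-a$ swap. So one should take $G$ to be the full ``bow-tie'' $\{(u,\zeta_i):\,u,\ u+\zeta_i\in(s-\eta_0,s+\eta_0)\text{ on opposite sides of }s\}$ from the outset; then every $\zeta_i^\perp\notin\{\chi_E\neq\chi_F\}$ contributes for exactly one of the two halves of $u$, regardless of walls, and the lower bound on the main term is unchanged. Second, the tube $\{x_0+(u-s)e_i+\zeta_i^\perp\}$ can leave $[0,l)^d$ in a direction $k\neq i$, since hypothesis (a) only controls the $i$-th coordinate of $x_0$; periodicity of $E$ does not resolve this, because the hypothesis (b) controls $\chi_E-\chi_F$ only on $[0,l)^d$. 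The correct fix is to restrict the $\zeta_i^\perp$-integral to $\{\zeta_i^\perp:x_0+\zeta_i^\perp\in[0,l)^d\}$: this keeps at least a fraction $2^{-(d-1)}$ of the full integral (worst case $x_0$ at a corner of $[0,l)^{d-1}$), since the sector $\{\|\zeta_i^\perp\|_1\leq|\zeta_i|\}$ producing the $|\zeta_i|^{-(\beta+2)}$ bound is sign-symmetric; the loss is absorbed into $c_{d,p}$. With these two repairs the proof is complete.
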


As shown in \cite{ker}, the above stability argument can be extended to all $p>d+1$, provided $\tau$ is sufficiently small depending on $p$.

In the following lemma we show  the main one-dimensional estimate needed in the proof  of Lemma~\ref{lemma:stimaLinea}. Roughly speaking, it shows that up to an error term (i.e. the constant $C_0$ in \eqref{eq:gstr40}), the contribution of the one-dimensional term $r_{\tau}(E,s)$ on an interval $I$ is bounded from below by the minimal energy density for periodic sets of period $|I|$. In particular, the estimate below takes into account minimal energy relative to the length of the interval on which it is performed, thus differing from previous optimization estimates obtained in \cite{dr_arma} in which the global minimum for the energy density over all possible periods was considered. Notice that it is valid for sufficiently large intervals. This will not be a restriction since we are interested in proving optimality of striped patterns in the large volume limit. 

\begin{lemma}
	\label{lemma:1D-optimization}
	There exists $C_0>0$ such that the following holds.
	Let $E\subset \R$  be a set of locally finite perimeter and $I\subset \R$ be an open interval such that $|I|>\bar L$ and $\bar L$ is as in Theorem \ref{cor:conv}. 
	Let $r_{\tau}(E,s)$ be defined as in~\eqref{eq:rtau1D}.
	Then for all $0<\tau< \min\{\tau_0,\bar{\tau}_4\}$, where $\tau_0$ is given in Lemma~\ref{rmk:stimax1} and $\bar{\tau}_4$ is given in Theorem \ref{cor:conv}, it holds
	\begin{equation}
		\label{eq:gstr40}
		\sum_{\substack{s \in \partial E\\ s \in I}} r_{\tau}(E,s) \geq |I|e_\tau(h_\tau(I)) - C_0.
	\end{equation} 
\end{lemma}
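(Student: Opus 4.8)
The plan is to reduce the estimate \eqref{eq:gstr40} to an analysis of the finite sequence of boundary points of $E$ inside $I$, and then to compare the resulting "discrete energy" against the energy of periodic stripes of period $h_\tau(I)$. Write $\partial E\cap I = \{s_1 < s_2 < \dots < s_m\}$. Each term $r_\tau(E,s_j)$ depends, through \eqref{eq:rtau1D}, on the two adjacent gaps $s_j - s_j^-$ and $s_j^+ - s_j$. The first reduction is to replace the "wild" integrals $\int_{s^-}^s\int_0^{+\infty}|\chi_E(\rho+u)-\chi_E(u)|\widehat K_\tau(\rho)\,d\rho\,du$ (which see \emph{all} of $E$, not just the neighbouring gaps) by their counterparts for the \emph{periodic} set having exactly the local gap structure near $s_j$; the error committed is controlled by the tail of $\widehat K_\tau$, which has power-law decay, and this is where a universal additive constant $C_0$ (independent of $\tau$, $|I|$ and $E$) first appears. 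After this step one is left with a sum $\sum_j \phi_\tau(a_j, a_{j+1})$, where $a_j := s_{j+1}-s_j$ are the gap lengths and $\phi_\tau$ is an explicit, symmetric two-gap interaction.

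**Next I would** perform a convexity/averaging argument on the gap lengths. Modulo lower-order tail errors, $\sum_j \phi_\tau(a_j,a_{j+1})$ is bounded below by $\sum_j \psi_\tau(a_j)$ for a one-gap function $\psi_\tau$, and the periodic configuration of period $h$ (i.e. all gaps equal to $h$) realizes $\frac{|I|}{2h}\cdot$(per-gap value) $= |I|\,e_\tau(h)$ up to a boundary correction of size $O(1)$ coming from the two incomplete periods at the ends of $I$ — again absorbed into $C_0$. The point is then that $\sum_j \psi_\tau(a_j) \geq$ (number of gaps)$\cdot \psi_\tau(\bar a)$ where $\bar a$ is the average gap, by convexity of $\psi_\tau$ where it matters — but one must be careful because $\psi_\tau$ need not be globally convex. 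Here I would split into cases. If some gap $a_j$ is very small ($< \eta_0$), then by Lemma~\ref{rmk:stimax1} (via \eqref{eq:stimamax1_eq}) the term $r_\tau$ at the adjacent boundary point is bounded below by a large positive quantity $\sim C_1 C_2 a_j^{-\beta}$, which easily dominates; this lets me discard configurations with very short or very long gaps. If all gaps lie in a compact range, then I can use that the number of gaps $m$ is comparable to $|I|$, rescale, and compare with $e_\tau(h_\tau(I)) = \min\{e_\tau(h): h\in |I|/(2\N)\}$ — exactly the quantity built to be the infimum over admissible periods for an interval of that length.

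**The delicate point**, and where the argument genuinely differs from \cite{dr_arma}, is matching the minimizing period to the \emph{length} $|I|$: the admissible periods for a periodic configuration fitting $I$ are quantized as $h = |I|/(2k)$, $k\in\N$, and $h_\tau(I)$ is defined as the minimizer over exactly this set. So after the convexity reduction I would argue that the optimal non-periodic arrangement of $m$ gaps summing to $|I|$ cannot beat the periodic one with $k$ periods, for the optimal integer $k$; the freedom to have non-equal gaps is killed by (near-)convexity, and the freedom to have non-integer number of periods is a boundary effect of size $O(1)$. I would use Theorem~\ref{cor:conv} to guarantee that, for $\tau$ small and $|I| > \bar L$, any near-minimal period $h_\tau(I)$ lies in $[\bar c_1,\bar c_2]$ and that $e_\tau$ is strictly convex ($\partial_h^2 e_\tau \geq \bar c_3$) near there, so the one-gap function $\psi_\tau$ (which is essentially $h\mapsto 2h\, e_\tau(h)$ up to tail corrections) is strictly convex on the relevant window, making the averaging inequality quantitative.

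**The main obstacle** I expect is the bookkeeping of the tail errors: showing that the difference between the "true" nonlocal interaction in \eqref{eq:rtau1D} (which couples each boundary point to the entire set $E$, with arbitrarily many boundary points nearby) and the idealized two-gap (or one-gap) interaction can be bounded by a single constant $C_0$ \emph{uniformly} in $\tau \in (0,\min\{\tau_0,\bar\tau_4\})$, in $|I|$, and in the configuration $E$. Since $\widehat K_\tau(\rho) = \tau^{-p/\beta}\widehat K_1(\rho \tau^{-1/\beta})$ concentrates as $\tau\to 0$, the far-tail contributions of $\widehat K_\tau$ are small, but the intermediate range and the accumulation of many small errors (one per boundary point, of which there are $\sim |I|$) must be shown to telescope or to be dominated by the already-present positive terms from Lemma~\ref{rmk:stimax1}. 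The natural way to handle this is to note that summing the error contributions produces a convergent series in the inverse gap lengths (of exponent $>1$), bounded by a geometric-type sum whose total is $O(1)$, and to feed any genuinely problematic (too-short-gap) contributions into the positive lower bound \eqref{eq:stimamax1_eq}. I would organize this as: (1) reduce to gaps in $[\eta_0, \bar c_2']$ via Lemma~\ref{rmk:stimax1}; (2) on that compact window, Taylor-expand/estimate $\phi_\tau$ and $\psi_\tau$ against $2h\,e_\tau(h)$ with explicit tail control; (3) apply the convexity inequality and optimize over the integer number of periods; (4) collect all errors into $C_0$.
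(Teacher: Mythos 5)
Your proposal takes a genuinely different route from the paper and, as written, contains a real gap in the key reduction step.

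The paper's proof does not attempt a per-boundary-point reduction to a two-gap or one-gap interaction. Instead it first reduces to the case where $r_\tau(E,k_1)<0$ and $r_\tau(E,k_m)<0$ (so that, via Lemma~\ref{rmk:stimax1}, the endpoint gaps exceed $\eta_0$; the reduction from $I$ to a subinterval $I'$ then uses the strict convexity from Theorem~\ref{cor:conv} to compare $e_\tau(h_\tau(I'))$ with $e_\tau(h_\tau(I))$ up to an $O(1)$ loss). It then constructs a \emph{single} competitor $E'$ by extending the pattern $E\cap(k_1,k_m)$ periodically, so that $E\Delta E'$ is supported entirely outside $(k_1-\eta_0,k_m+\eta_0)$, and shows $\sum_j r_\tau(E,k_j)-\sum_j r_\tau(E',k_j)=I_1+I_2$ with $|I_1|,|I_2|\leq C_0/2$ via a single double-tail integral of $\widehat K_\tau$. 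Because $E'$ is a one-dimensional periodic set of period $\approx|I|$, the lower bound $\sum_j r_\tau(E',k_j)\geq |I|\,e_\tau(h_\tau(I))-\tilde C_0$ then follows directly from the definition of $h_\tau(I)$ and the known one-dimensional minimization theory (Theorem~\ref{T:1d}); no new convexity or averaging argument in the gap lengths is needed.

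The gap in your plan is precisely the step you flag as the ``main obstacle,'' and your proposed resolution does not work. You want to replace each $r_\tau(E,s_j)$ by a local interaction $\phi_\tau(a_{j-1},a_j)$ depending only on the two adjacent gaps, committing, per boundary point, an error controlled by the kernel tail past the adjacent gaps. But for gaps $a_j$ in a bounded window (as Theorem~\ref{cor:conv} forces), that per-point error is $O\bigl(h\int_{2h}^\infty\widehat K_\tau\bigr)=O(h^{-(q-2)})=O(1)$; since there are $m\sim|I|/h$ boundary points, the cumulative error is $O(|I|)$, the \emph{same order} as the main term $|I|e_\tau(h_\tau(I))$, not $O(1)$. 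Your claim that these errors ``produce a convergent series $\ldots$ whose total is $O(1)$'' is unsubstantiated and, for $q$ close to $3$, false. The sign of the error is also indefinite (a configuration with more nearby boundary points makes $r_\tau$ more negative), so there is no one-sided domination by the positive terms of Lemma~\ref{rmk:stimax1} unless a gap is actually $<\eta_0$. The paper avoids this entirely by approximating $E$ \emph{once} by a single $E'$ that agrees with $E$ on all of $I$, so the approximation error lives far from the integration domain and the tails genuinely give $O(1)$; your scheme re-approximates $E$ anew at each $s_j$, and those errors accumulate. A secondary issue: even granting the reduction, the one-gap function you would need is $\psi_\tau(h)=h\,e_\tau(h)$, and convexity of $e_\tau$ (from~\eqref{eq:c3}) does not imply convexity of $h\mapsto h\,e_\tau(h)$, since $\partial_h^2(he_\tau)=2\partial_he_\tau+h\partial_h^2 e_\tau$ and $\partial_h e_\tau$ changes sign. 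More fundamentally, your plan is in effect re-deriving the one-dimensional periodicity result (Theorem~\ref{T:1d}), which in the literature requires reflection positivity rather than naive gap-length convexity; the lemma at hand is meant to be a localization/boundary-effect estimate that invokes that theorem as a black box, not to re-prove it.
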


\begin{proof}
	Let us denote by $k_1< \ldots< k_m $ the  points of $\partial E \cap I$, and 
	
	\begin{equation*}
		k_0 = \sup\{ s\in \partial E: s < k_1\} \qquad\text{and}\qquad 
		k_{m+1} = \inf\{ s\in \partial E: s > k_m\} 
	\end{equation*}
	
	W.l.o.g. we may assume that  $r_{\tau}(E,k_1) < 0$ and that $r_{\tau}(E,k_m) < 0$. 
	
	We claim that if this is not the case one can consider $I' \subset I$  such that $r_{\tau}(E,k'_1) < 0$ and $r_{\tau}(E,k'_{m'}) < 0$, where  $k'_1,\cdots,k'_{m'}$ are the points of $\partial E \cap I'$.
	Indeed, if estimate \eqref{eq:gstr40} holds for $I'$ then one has the following chain of inequalities
	\begin{equation}\label{eq:ii'}
		\sum_{\substack{s \in \partial E\\ s \in I}}
r_{\tau}(E,s) \geq \sum_{\substack{s \in \partial E\\ s \in I'}} r_{\tau}(E,s) \geq e_{\tau}(h_\tau(I')) |I'| - C_0.
\end{equation}
Moreover, by Theorem \ref{cor:conv}, there exists $h\in h_\tau(\bar I)$  for some interval $\bar I$ with $|\bar I|\geq \bar L$ satisfying $e_\tau(h_\tau(\bar I))\leq e_\tau(h_\tau(I'))$.
Identifying with a slight abuse of notation such $h$ with $h_\tau(\bar I)$, by Theorem \ref{T:1d} and Theorem  \ref{cor:conv} one has that
\begin{equation}\label{eq:htau2}
	h_\tau(\bar I), \, h_\tau(I)\,\in [\bar c_1,\bar c_2], \bigl|h_\tau(\bar I)-h_\tau(I)\bigr|\leq\frac{C}{\min\{|I|, |\bar I|\}}
\end{equation}
and 
\begin{equation}
	\partial_h^2e_\tau(h)\geq\bar c_3>0\quad\text{on $[\bar c_1,\bar c_2]$}.
\end{equation}
Then one has that
\begin{align}
	e_\tau(h_\tau(I')) |I'|&\geq e_\tau(h_\tau(\bar I))|I'|\geq e_\tau(h_\tau(I)) |I'|+(h_\tau(\bar I)-h_\tau(I))\partial_h e_\tau(h_\tau(I))|I'|\notag\\
	&\geq e_\tau(h_\tau(I)) |I|+(h_\tau(\bar I)-h_\tau(I))\partial_h e_\tau(h_\tau(I))|I'|,\label{eq:27.5}
\end{align}
where in the last inequality we used the fact that $|I|\geq |I'|$ and that $e_\tau(h_\tau(I))<0$. 
Now observe that by formula \eqref{eq:dere} on the interval $[\bar c_1,\bar c_2]$ of Theorem \ref{cor:conv} the function $\partial_he_\tau(h)$ is uniformly bounded, i.e. $|\partial_he_\tau(h)|\leq\bar C$. This fact together with \eqref{eq:htau2} implies the following 
\begin{align}
	e_\tau(h_\tau(I')) |I'|&\geq e_\tau(h_\tau(I)) |I|+(h_\tau(\bar I)-h_\tau(I))\partial_h e_\tau(h_\tau(I))|I'|\notag\\
	&\geq e_\tau(h_\tau(I)) |I|-\frac{C}{\min\{|I|, |\bar I|\}}{\bar C}|I'|\label{eq:lineq1}\\
	&\geq e_\tau(h_\tau(I)) |I|- C\bar C,\label{eq:lineq2}
\end{align}
where in passing from \eqref{eq:lineq1} to \eqref{eq:lineq2} we used the fact that $|I'|<|I|$ and that $\bar I$ with optimal compatible period $h_\tau(\bar I)$ can be chosen to be such that $|\bar I|=|I|+O(1)$. 

Thus from \eqref{eq:ii'} and \eqref{eq:lineq2} it follows that, eventually enlarging the constant $C_0$ in \eqref{eq:gstr40}, the main estimate \eqref{eq:gstr40} is valid also for the interval $I$ whenever it is valid for $I'$.

Because of Lemma~\ref{rmk:stimax1}, the fact that $r_{\tau}(E,k_1) < 0$ and $r_{\tau}(E,k_m) < 0$ implies that there exists $\eta_0>0$ (for all  $\tau \leq \tau_0$) such that 
\begin{equation*}
\min(|k_1 - k_0 |, |k_2 - k_1 |, |k_{m-1} - k_m |,|k_{m+1} - k_m |) > \eta_0. 
\end{equation*}

% Let us assume that $(k_{m-1}, k_m)\subset E$ and $(k_1, k_2) \subset E^c$.  The other cases are treated analogously. 
We claim that 
\begin{equation}
\label{eq:gstr4}
\sum_{i =1}^{ m } r_{\tau}(E,k_i) \geq \sum_{i =1}^{ m } r_{\tau}(E',k_i)  - \bar C_0 
\end{equation}
where $E'$ is obtained by extending periodically $E$ with the pattern contained in $E \cap (k_1,k_m)$ and $\bar C_0  = \bar C_0(\eta_0) > 0$. 
The construction of $E'$ can be done as follows: if $m$ is odd we repeat periodically $E\cap (k_1,k_m)$, and if $m$ is even we repeat periodically $(k_1-\eta_{0}, k_m)$. 

Thus we have constructed a set $E'$ which is periodic of period $k_m-k_1$ or $k_m- k_1 + \eta_0$. Therefore, setting 

\begin{equation}
\label{eq:ggstr1}
\sum_{i=1}^m r_\tau(E',k_i) \geq e_{\tau} (h_\tau(I)) - \tilde C _0,
\end{equation}
where $ \tilde C_0 = \tilde C_0(\eta_0)$. 
Inequality~\eqref{eq:ggstr1} follows by definition of optimal energy density relative to the interval $I=(k_1,k_m)$.

Inequality~\eqref{eq:ggstr1} combined with \eqref{eq:gstr4} yields \eqref{eq:gstr40}.

To show \eqref{eq:gstr4}, notice that the symmetric difference between $E$ and $E'$ satisfies
\begin{equation*}
E \Delta E'  \subset (- \infty, k_1 - \eta_0) \cup (k_m+\eta_0, + \infty),
\end{equation*}
where $\eta_{0}$ is the constant defined in Lemma~\ref{rmk:stimax1}.
To obtain \eqref{eq:gstr4},  we need to estimate $|\sum_{i=1}^m r_\tau(E,k_i) - \sum_{i=1}^m r_\tau(E',k_i) |$. 
Let 
\begin{equation*}
\begin{split}
	\sum_{i=1}^m r_\tau(E,k_i) - \sum_{i=1}^m r_\tau(E',k_i) =
	I_1 + I_2,
\end{split}
\end{equation*}
where
\begin{equation*}
\begin{split}
	I_1 =  \sum_{i=0}^{m-1}&  \int_{k_{i}}^{k_{i+1}}\int_{0}^{+\infty} \bigl[\bigl(s - |\chi_{E}(s+u) - \chi_{E}(u)|\bigr)-\bigl(s - |\chi_{E'}(s+u) - \chi_{E'}(u)|\bigr)\bigr]\widehat{K}_{\tau}(s) \ds \du\\
	I_2=  \sum_{i=1}^{m} & \int_{k_{i}}^{k_{i+1}}\int_{-\infty}^{0} \bigl[\bigl(s - |\chi_{E}(s+u) - \chi_{E}(u)|\bigr)-\bigl(s - |\chi_{E'}(s+u) - \chi_{E'}(u)|\bigr)\bigr]\widehat{K}_\tau(s) \ds \du. 
\end{split}
\end{equation*}
Thus by using the integrability of $\widehat K$, we have that 
\begin{equation*}
\begin{split}
	|I_1| \leq \int_{k_{0}} ^{k_{m}} \int_{0}^{+\infty} \chi_{E \Delta E'}(u +s  ) \widehat{K}_{\tau}(s) \ds \du \leq\int_{k_0}^{k_m}\int_{k_{m} + \eta_0}^{\infty} \widehat{K}_{\tau}(u-v) \dv\du \leq  \frac{C_{0}}{2},
\end{split}
\end{equation*}
where $C_{0}$ is a constant depending only on $\eta_0$.  Similarly, $|I_2| \leq C_{0}/2$

Thus we have that
\begin{equation*}
\Big|\sum_{i=1}^m r_\tau(E,k_i) - \sum_{i=1}^m r_\tau(E',k_i)\Big| \leq C_0.
\end{equation*}

\end{proof}

The next lemma is the analogue of Lemma 7.11 in~\cite{dr_arma} and gives a lower bound on the energy in the case almost all the volume of $Q_l(z)$ is filled by $E$ or $E^c$ (this will be the case on the set $A_{-1}$ defined in~\eqref{a1}).
\begin{lemma}
	\label{lemma:stimaQuasiPieno}
	Let  $E$ be a set of locally finite perimeter  such that $\min(|Q_{l}(z)\setminus E|, |E\cap Q_{l}(z) |)\leq {\delta} l^d$, for some $\delta>0$. Then 
	\begin{equation*}
		\begin{split}
			\bar F_{\tau} (E,Q_{l}(z)) \geq -\frac {\delta d } {\eta_0 },
		\end{split}
	\end{equation*}
	where $\eta_0$ is defined in Lemma~\ref{rmk:stimax1}.
\end{lemma}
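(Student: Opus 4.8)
The plan is to bound each summand $\bar F_{i,\tau}(E,Q_l(z))$ from below separately and then add up over $i=1,\dots,d$. Two reductions are immediate. First, by \eqref{eq:witau}, \eqref{eq:vitau} and \eqref{eq:defFE} the integrands defining $w_{i,\tau}$ and $v_{i,\tau}$ are non-negative (products of the non-negative kernel $K_\tau$ with the non-negative function $f_E$), so $w_{i,\tau}\ge 0$ and $v_{i,\tau}\ge 0$ pointwise and hence
\[
\bar F_{i,\tau}(E,Q_l(z))\ \ge\ \frac{1}{l^d}\int_{Q^{\perp}_l(z^{\perp}_i)}\ \sum_{\substack{s\in\partial E_{t^{\perp}_i}\\ t^{\perp}_i+se_i\in Q_l(z)}} r_{i,\tau}(E,t^{\perp}_i,s)\ \dt^{\perp}_i .
\]
Everything thus reduces to a one-dimensional estimate on the slices $E_{t^{\perp}_i}$. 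Second, all the quantities $r_{i,\tau},v_{i,\tau},w_{i,\tau}$ depend on $E$ only through increments of the form $|\chi_E(\cdot)-\chi_E(\cdot)|$ and through $\partial E=\partial E^c$, so $\bar F_\tau(\cdot,Q_l(z))$ is invariant under $E\mapsto E^c$; we may therefore assume $|E\cap Q_l(z)|\le \delta l^d$.

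Fix $i$ and a slice $t^{\perp}_i$, write $I:=Q^i_l(z_i)$ for the relevant interval of length $l$ and $m(t^{\perp}_i):=|E_{t^{\perp}_i}\cap I|$. I would split the boundary points $s\in\partial E_{t^{\perp}_i}\cap I$ into two groups according to the dichotomy in Lemma~\ref{rmk:stimax1}. If $\min(|s-s^-|,|s^+-s|)<\eta_0$ then $r_{i,\tau}(E,t^{\perp}_i,s)>0$, so these points only help and can be dropped from the sum. For the remaining points both adjacent slice-intervals $(s^-,s)$ and $(s,s^+)$ have length at least $\eta_0$, and discarding the non-negative terms in \eqref{eq:stimamax1_eq} gives the crude bound $r_{i,\tau}(E,t^{\perp}_i,s)\ge -1$. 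Consequently $\sum_s r_{i,\tau}(E,t^{\perp}_i,s)\ge -N(t^{\perp}_i)$, where $N(t^{\perp}_i)$ counts the points $s\in\partial E_{t^{\perp}_i}\cap I$ both of whose adjacent slice-intervals have length $\ge\eta_0$.

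The heart of the proof is the bound on $N(t^{\perp}_i)$ in terms of $m(t^{\perp}_i)$. Each such $s$ is an endpoint of a maximal interval of $E_{t^{\perp}_i}$ of length $\ge\eta_0$ — namely whichever of $(s^-,s)$, $(s,s^+)$ lies in $E$ — and every maximal interval has exactly two endpoints. The maximal intervals of $E_{t^{\perp}_i}$ of length $\ge\eta_0$ contained in $I$ are pairwise disjoint with total length at most $m(t^{\perp}_i)$, so there are at most $m(t^{\perp}_i)/\eta_0$ of them, plus at most a bounded number of such intervals straddling the two endpoints of $I$. This gives $N(t^{\perp}_i)\lesssim m(t^{\perp}_i)/\eta_0+1$. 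Integrating in $t^{\perp}_i$ over $Q^{\perp}_l(z^{\perp}_i)$, using Fubini in the form $\int_{Q^{\perp}_l(z^{\perp}_i)} m(t^{\perp}_i)\,\dt^{\perp}_i=|E\cap Q_l(z)|\le\delta l^d$, then dividing by $l^d$ and summing over $i$, produces a lower bound of the form $-\delta d/\eta_0$, the lower-order boundary terms scaling like $1/l$; tracking the constants yields the stated inequality.

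The only genuine subtlety — everything else is bookkeeping — is this last step: one must handle the maximal $E$-intervals clipped by $\partial I$ so that their contribution is truly negligible next to the main term $\delta d/\eta_0$, and one must absorb into the constant the double counting coming from the two endpoints of each interval. All the one-dimensional input needed is already contained in Lemma~\ref{rmk:stimax1} (positivity of $r_{i,\tau}$ at close-by boundary points together with the quantitative bound \eqref{eq:stimamax1_eq}); no estimate on $\widehat K_\tau$ beyond the finiteness of its first moment — which is what makes $r_{i,\tau}\ge-1$ hold uniformly — is required.
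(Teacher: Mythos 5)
Your overall structure is the natural one and matches the paper's: drop the non-negative $v$- and $w$-terms, use the $E\leftrightarrow E^c$ symmetry, and on each slice bound $\sum_s r_{i,\tau}$ below by $-1$ times the number of boundary points both of whose adjacent slice-intervals have length $\ge\eta_0$, then count these via the long maximal $E$-intervals. However, the step you dismiss as ``bookkeeping'' hides a genuine gap: the clipped $E$-intervals are \emph{not} negligible next to $\delta d/\eta_0$. A boundary point $s\in\partial E_{t^\perp_i}\cap Q^i_l(z_i)$ belonging to a long $E$-interval that straddles an endpoint of $Q^i_l(z_i)$ can have $r_{i,\tau}(E,t^\perp_i,s)$ arbitrarily close to $-1$ while that interval contributes arbitrarily little to $|E\cap Q_l(z)|$. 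Concretely, for $d\ge 2$ take the periodized slab $E=\bigcup_{k\in\Z}(a+kL,b+kL)\times\R^{d-1}$ with $a<z_1-l/2<b$, $b-(z_1-l/2)=\epsilon$, and $b-a,\,L-(b-a)\gg\eta_0$. Then $w_{j,\tau}\equiv v_{j,\tau}\equiv 0$ for all $j$ (the cross-term $f_E$ vanishes for a slab), the slices in directions $j\ne 1$ have no boundary, and on each slice in direction $e_1$ the unique boundary point in the cube is $s=b$ with $r_{1,\tau}\to -1$. So $\bar F_\tau(E,Q_l(z))\to -1/l$, while $\delta=\epsilon/l$ can be made as small as one likes, violating the claimed $\bar F_\tau\ge -\delta d/\eta_0$. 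Your counting gives $N(t^\perp_i)\le 2m(t^\perp_i)/\eta_0 + 2$ (the factor $2$ because both endpoints of a long $E$-interval can be ``bad''), hence $\bar F_\tau\ge -2\delta d/\eta_0 - Cd/l$; the $O(1/l)$ term cannot be absorbed when $\delta\ll 1/l$.

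This is really an imprecision in the lemma as restated, not a flaw in your method: in the paper it is applied only with $\delta=1/l$ and the resulting constant is immediately renamed $\tilde C_d$, so any bound of the form $-C\delta d/\eta_0 - C'd/l$ suffices, and that is exactly what your argument yields. But your claim that the clipping terms are ``truly negligible next to the main term $\delta d/\eta_0$'' is false in general. To close the argument you should either carry the $O(d/l)$ error explicitly and observe the lemma is only invoked with $\delta\gtrsim 1/l$, or note that each boundary-straddling bad point consumes $\ge\eta_0$ of $E$-measure in the enlarged cube $Q_{l+2\eta_0}(z)$ and weaken the statement accordingly. Either way this is a step requiring an explicit argument, not a remark to be tracked away in the constants.
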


The following lemma contains the main lower bounds of the complete functional along one-dimensional slices. It relies on the previous lemmas of this section. In our setting, namely aiming at proving striped pattern formation in the large volume limit along periodic boxes of arbitrary size, the optimal energy densities relative to intervals of different length have to be taken into account. In order to exploit the validity of the one-dimensional estimate of Lemma \ref{lemma:1D-optimization} the mesoscopic scale $l$ has to be sufficiently large. 

\begin{lemma}
	\label{lemma:stimaLinea}
	Let  ${\eps_2},{\tau_2}>0$ as in Lemma~\ref{lemma:stimaContributoVariazionePiccola}, $\bar{\tau}_4$ as in Theorem \ref{cor:conv} and let $l\geq 2\bar L$ with $\bar L$ as in Theorem \ref{cor:conv}. Let $\delta=\eps^d/(16l^d)$ with $0<\eps\leq{\eps_2}$, $0<\tau\leq\min\{\tau_2,\bar{\tau}_4\}$ and %$l>C_0/(-C^*_{\alpha,\tau})$ for where
	$C_0$ be the constant appearing in Lemma~\ref{lemma:1D-optimization}. Let $t_i^\perp\in[0,L)^{d-1}$ and $\eta>0$.

	The following hold: there exists a constant $C_1$ independent of $l$ (but depending on the dimension and on $\eta_0$ as in Lemma~\ref{rmk:stimax1}) such that
	\begin{enumerate}[(i)]
		\item Let $J\subset \R$ an interval  such that for every $s\in J$ one has that  $D^{j}_{\eta}(E,Q_{l}(t^{\perp}_{i}+se_i))\leq \delta$ with $j\neq i$. 
		Then
		\begin{equation}
			\label{eq:gstr20}
			\begin{split}
				\int_{J} \bar{F}_{i,\tau}(E,Q_{l}(t^{\perp}_{i}+se_i))\ds \geq - \frac{C_1}{l}.
			\end{split}
		\end{equation}
		Moreover, if $J = [0,L)$, then 
		\begin{equation}
			\label{eq:gstr21}
			\begin{split}
				\int_{J} \bar{F}_{i,\tau}(E,Q_{l}(t^{\perp}_{i}+se_i))\ds \geq0.
			\end{split}
		\end{equation}
		\item Let $J = (a,b)\subset \R$. 
		If for $s=a$ and $s=b$ it holds $D_\eta^j(E,Q_{l}(t^{\perp}_i+se_i)) \leq \delta$ with $j\neq i$, then  setting 
		\begin{equation}\label{eq:jl}
			J_l=(a+l/4,b-l/4)\text{ if $|b-a|>l$, } %J_l=\emptyset \text{ otherwise}
		\end{equation} 
		one has that
		\begin{equation}
			\label{eq:gstr27}
			\begin{split}
				\int_{J} \bar{F}_{i,\tau}(E,Q_{l}(t^{\perp}_{i}+se_i))\ds \geq\Big(|J_l| e_\tau(h_\tau(J_l)) -C_0\Big)\chi_{(0,+\infty)}(|J|-l)-\frac{C_1} l,
			\end{split}
		\end{equation}
		otherwise
		\begin{equation}
			\label{eq:gstr36}
			\begin{split}
				\int_{J} \bar{F}_{i,\tau}(E,Q_{l}(t^{\perp}_{i}+se_i))\ds \geq \Big(|J_l|e_\tau(h_\tau(J_l))-C_0\Big)\chi_{(0,+\infty)}(|J|-l) - C_1l.
			\end{split}
		\end{equation}
		Moreover, if $J = [0,L)$, then
		\begin{equation}
			\label{eq:gstr28}
			\begin{split}
				\int_{J} \bar{F}_{i,\tau}(E,Q_{l}(t^{\perp}_{i}+se_i))\ds \geq Le_\tau(h_{\tau,L}).
			\end{split}
		\end{equation}
	\end{enumerate}
\end{lemma}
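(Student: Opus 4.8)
The plan is to reduce everything, via Fubini, to a sum of the one-dimensional quantities $r_\tau$ and $v_{i,\tau}$ along the slices $E_{\tilde t_i^\perp}$, and then to control the boundary points at which this sum is negative. First I discard the nonnegative term $\int_{Q_l(z)}w_{i,\tau}$ in the definition of $\bar F_{i,\tau}$, and observe that $r_{i,\tau}(E,\tilde t_i^\perp,\tilde s)$ and $v_{i,\tau}(E,\tilde t_i^\perp,\tilde s)$ depend only on the slice $E_{\tilde t_i^\perp}$ and the boundary point $\tilde s$, not on the localizing cube; see \eqref{eq:ritau}, \eqref{eq:vitau}. Exchanging the order of integration in $\int_J \bar F_{i,\tau}(E,Q_l(t_i^\perp+se_i))\,ds$ then gives the lower bound $\tfrac1{l^d}\int_{Q_l^\perp(t_i^\perp)}\sum_{\tilde s\in\partial E_{\tilde t_i^\perp}}(v_{i,\tau}+r_{i,\tau})(E,\tilde t_i^\perp,\tilde s)\,\omega(\tilde s)\,d\tilde t_i^\perp$, where $\omega(\tilde s)=|J\cap(\tilde s-l/2,\tilde s+l/2)|\in[0,\min(l,|J|)]$ is the measure of the set of cube centres that see $\tilde s$. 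The engine of the proof is the dichotomy for a boundary point $\tilde s$ with $(v_{i,\tau}+r_{i,\tau})(\tilde s)<0$: by Lemma~\ref{rmk:stimax1} (and $v_{i,\tau}\ge0$) such $\tilde s$ is at distance $\ge\eta_0$ from both of its neighbours in $\partial E_{\tilde t_i^\perp}$, hence the ``bad'' points in a given slice are pairwise $\eta_0$-separated, so at most $O(\ell/\eta_0)$ of them lie on any interval of length $\ell$; and by the Local Stability Proposition~\ref{lemma:stimaContributoVariazionePiccola} (applicable since $\delta=\eps^d/(16l^d)$ is exactly its hypothesis) if, moreover, $\tilde s$ lies at distance $>\eta_0$ from the boundary of some cube $Q_l(t_i^\perp+se_i)$ with $D^j_\eta(E,Q_l(t_i^\perp+se_i))\le\delta$, $j\neq i$, then $(v_{i,\tau}+r_{i,\tau})(\tilde s)\ge0$, a contradiction. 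Thus the bad points are confined to thin bands near the endpoints of the intervals $(s-l/2,s+l/2)$ for the controlled centres $s$, and on those bands $\omega$ can be bounded from below explicitly.

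For (i): since $D^j_\eta(E,Q_l(t_i^\perp+se_i))\le\delta$ for \emph{every} $s\in J$, any bad $\tilde s$ must satisfy $|\tilde s-s|\ge l/2-\eta_0$ for all $s\in J$ with $|\tilde s-s|<l/2$; this both forces $\tilde s$ into two intervals of length $\eta_0$ just inside the endpoints of $(\inf J-l/2,\sup J+l/2)$ and forces $\omega(\tilde s)\le2\eta_0$. The $\eta_0$-separation leaves $O(1)$ such $\tilde s$ per slice, each contributing $\ge-2\eta_0$; dividing by $l^d$ and integrating over the $(d-1)$-cube of measure $l^{d-1}$ gives \eqref{eq:gstr20}. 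When $J=[0,L)$, periodicity (together with $2\eta_0<l<L$) eliminates the two exceptional bands, so there are no bad points and $\omega\equiv l$, yielding \eqref{eq:gstr21}; moreover, dropping $v_{i,\tau}\ge0$, the remaining sum is $\tfrac1{l^{d-1}}\int_{Q_l^\perp}\sum_{\tilde s\in\partial E_{\tilde t_i^\perp}\cap[0,L)}r_\tau(E_{\tilde t_i^\perp},\tilde s)\,d\tilde t_i^\perp$, and since each $E_{\tilde t_i^\perp}$ is an $L$-periodic subset of $\R$ whose only nonzero contribution to $\Fcal_{\tau,L}(E_{\tilde t_i^\perp}\times\R^{d-1})$ comes from $r_\tau$, this inner sum equals $L\,\Fcal_{\tau,L}(E_{\tilde t_i^\perp}\times\R^{d-1})\ge L\,e_\tau(h_{\tau,L})$ by the one-dimensional minimality of Theorem~\ref{T:1d}, giving \eqref{eq:gstr28}.

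For (ii): now only the cubes centred at $a$ and $b$ are controlled. In the controlled case the dichotomy confines the bad points to $\eta_0$-neighbourhoods of $\{a\pm l/2,b\pm l/2\}$. On the two ``outer'' bands (near $a-l/2$ and $b+l/2$) one repeats the computation of (i): $\omega\le2\eta_0$, contribution $\ge-C_1/l$. For the bad points on the ``inner'' bands (near $a+l/2$ and $b-l/2$) one does \emph{not} use Local Stability but applies the one-dimensional optimization Lemma~\ref{lemma:1D-optimization} on the interior interval $J_l=(a+l/4,b-l/4)$: when $|J|>l$ one has $|J_l|>\bar L$, so for a.e.\ slice $\sum_{\tilde s\in\partial E_{\tilde t_i^\perp}\cap J_l}r_\tau(E_{\tilde t_i^\perp},\tilde s)\ge|J_l|\,e_\tau(h_\tau(J_l))-C_0$; combining this with $v_{i,\tau}\ge0$, with the crude bound $(v_{i,\tau}+r_{i,\tau})\ge-1$ and the $\eta_0$-separation on the layers $J\setminus J_l$ (width $l/4$), and with $\omega\le l$, one assembles \eqref{eq:gstr27}, the factor $\chi_{(0,+\infty)}(|J|-l)$ recording that $J_l$ exists only when $|J|>l$ and $-C_1/l$ collecting the outer-band error. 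In the ``otherwise'' case the bad points cannot be localized, so every slice may carry up to $|J|/\eta_0$ of them, each with $\omega$ up to $l$; this produces the weaker error $-C_1 l$ of \eqref{eq:gstr36}, while the $|J_l|\,e_\tau(h_\tau(J_l))$ term is obtained exactly as before.

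The main obstacle is the bookkeeping of these boundary layers: one must simultaneously localize the bad boundary points (through the interplay of Lemma~\ref{rmk:stimax1} and Proposition~\ref{lemma:stimaContributoVariazionePiccola}), control the weights $\omega$ precisely enough on those layers so the error stays $O(1/l)$, resp.\ $O(l)$, and match the per-slice estimate of Lemma~\ref{lemma:1D-optimization} against $e_\tau(h_\tau(J_l))$ while keeping every constant independent of $l$ — the delicate point being the borderline regime in which $|J|$ is comparable to $l$, where the inner and outer bands collide and one must argue more carefully (for instance by slightly enlarging $J$ so that $|J|>l$ and invoking $J_l$). The role of the relation $\delta=\eps^d/(16l^d)$ is precisely to make the Local Stability hypothesis available on the controlled cubes while leaving $C_1$ free of $l$.
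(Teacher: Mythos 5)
Your proposal is correct and follows essentially the same route as the paper's proof: the Fubini rewriting with the weight $\omega(\tilde s)=|J\cap Q^i_l(\tilde s)|$ (which is the paper's $|Q^i_l(s')\cap J|$), the dichotomy combining the $\eta_0$-separation from Lemma~\ref{rmk:stimax1} with Local Stability (Proposition~\ref{lemma:stimaContributoVariazionePiccola}), the one-dimensional optimization Lemma~\ref{lemma:1D-optimization} applied on $J_l$, and $L$-periodicity for the case $J=[0,L)$. Your derivation of~\eqref{eq:gstr28} — identifying $\sum_{s}r_\tau(E_{\tilde t_i^\perp},s)$ with $L\,\Fcal_{\tau,L}(E_{\tilde t_i^\perp}\times\R^{d-1})$ and invoking Theorem~\ref{T:1d} — is a useful unpacking of the paper's terse ``proceeds using the $L$-periodicity of the contributions''; just note that this inequality holds for any $L$-periodic slice and does not need the $D^j_\eta$-control of case~(i), so it belongs under~(ii) as stated.

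One small caveat, shared equally by the paper and your write-up: on the transition bands $(a+l/4,a+l/2)\cup(b-l/2,b-l/4)\subset J_l$ the weight $\omega/l$ is close to but not equal to $1$, while the $\eta_0$-bands around $a+l/2$ and $b-l/2$ lie inside $J_l$ and carry $\omega/l$ close to $1$; passing from $\sum\frac{\omega}{l}(r+v)$ to the unweighted $\sum r$ there produces an $O(1)$ (not $O(1/l)$) discrepancy, which is harmless because it can be absorbed into the additive constant $C_0$ of Lemma~\ref{lemma:1D-optimization} (the $-C_0$ term is already active precisely when $|J|>l$). So there is no gap of substance; it is bookkeeping that should be phrased slightly more carefully in both accounts.
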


   \begin{proof}The proof of $(i)$ follows from Lemma~\ref{lemma:stimaContributoVariazionePiccola} as in Lemma 7.9 in~\cite{dr_arma}.

Let us now prove $(ii)$.  For simplicity of notation we assume that $J= (0,l')$.  

% As in~\cite{dr_arma}, one has that
One has that

   \begin{equation}
      \label{eq:gstr32}
      \begin{split}
         \int_{J} \bar{F}_{i,\tau}(E,Q_{l}(t^{\perp}_i,s)) \ds &         \geq \frac{1}{l^{d-1}} \int_{Q^{\perp}_{l}(t^{\perp}_{i})} \sum_{\substack{s' \in \partial E_{t'^{\perp}_{i}} 
               \\ s'\in (-\frac l2, l' + \frac l2)}}\frac{|Q^{i}_{l}(s')\cap J|}{l} \Big(r_{i,\tau }(E,t'^{\perp}_{i},s') + v_{i,\tau }(E,t'^{\perp}_{i},s')\Big) \dt'^{\perp}_{i}\\
         & =   \frac{1}{l^{d-1}} \int_{Q^{\perp}_{l}(t^{\perp}_{i})} \sum_{\substack{s' \in \partial E_{t'^{\perp}_{i}} 
               \\ s'\in (\frac l4, l'-\frac l4 )}}\frac{|Q^{i}_{l}(s')\cap J|}{l} \Big(r_{i,\tau }(E,t'^{\perp}_{i},s') + v_{i,\tau }(E,t'^{\perp}_{i},s')\Big) \dt'^{\perp}_{i}\\
         & +   \frac{1}{l^{d-1}} \int_{Q^{\perp}_{l}(t^{\perp}_{i})} \sum_{\substack{s' \in \partial E_{t'^{\perp}_{i}} 
               \\ s'\in (-\frac l2, \frac l4 ]\cup [l'-\frac l4,l'+\frac l2)}}\frac{|Q^{i}_{l}(s')\cap J|}{l} \Big(r_{i,\tau }(E,t'^{\perp}_{i},s') + v_{i,\tau }(E,t'^{\perp}_{i},s')\Big) \dt'^{\perp}_{i}
      \end{split}
   \end{equation}
Let us now show \eqref{eq:gstr36}.
   If $l'\leq l$, %we have that $(l/2,l'-l/2)$ is empty and then 
   %\begin{equation*}
   %\begin{split}
   %\frac{1}{l^{d-1}} \int_{Q^{\perp}_{l}(t^{\perp}_{i})} \sum_{\substack{s' \in \partial E_{t'^{\perp}_{i}} 
   	%	\\ s'\in (l/2, l'-l/2)}} \Big(r_{i,\tau }(E,t'^{\perp}_{i},s')  + v_{i,\tau }(E,t'^{\perp}_{i},s')\Big) \dt'^{\perp}_{i} = 0.
   %\end{split}
   %\end{equation*}
  then  $(t_i'^\perp,s')\in Q_{l}(t'^\perp_i,0)$ 
   or $(t_i'^\perp,s')\in Q_{l}(t'^\perp_i,l')$. 
   If the condition $D^{j}_{\eta}(E,Q_{l}(t'^\perp_i,0)) \leq \delta$ or $D^{j}_{\eta}(E,Q_{l}(t'^\perp_i,l')) \leq \delta$ is missing, then we will estimate $r_{i,\tau}(E,t'^\perp_{i},s') + v_{i,\tau}(E,t'^{\perp}_{i},s')$ from below with $-1$ whenever the neighbouring ``jump'' points are further than $\eta_0$, together with the fact that $\frac{|Q^{i}_{l}(s')\cap J|}{l}\leq 1$. Otherwise $r_{i,\tau} +v_{i,\tau}\geq 0$. 
   Hence, the inequality~\eqref{eq:gstr32} can be estimated by
   \begin{equation*}
   \frac{1}{l^{d-1}} \int_{Q^{\perp}_{l}(t^{\perp}_{i})} \sum_{\substack{s' \in \partial E_{t'^{\perp}_{i}} 
               \\ s'\in (-\frac l2, l'+\frac l2 ]}}\frac{|Q^{i}_{l}(s')\cap J|}{l} \Big(r_{i,\tau }(E,t'^{\perp}_{i},s') + v_{i,\tau }(E,t'^{\perp}_{i},s')\Big) \dt'^{\perp}_{i} \geq - C_1l.
   \end{equation*}
%Otherwise, if $D^{j}_{\eta}(E,Q_{l}(t'^\perp_i,0)) \leq \delta$ and  $D^{j}_{\eta}(E,Q_{l}(t'^\perp_i,l')) \leq \delta$ , as in \eqref{eq:gstr20} one has that
%   \begin{equation*}
%	\frac{1}{l^{d-1}} \int_{Q^{\perp}_{l}(t^{\perp}_{i})} \sum_{\substack{s' \in \partial E_{t'^{\perp}_{i}} 
%			\\ s'\in (-\frac l2, l'+\frac l2 ]}}\frac{|Q^{i}_{l}(s')\cap J|}{l} \Big(r_{i,\tau }(E,t'^{\perp}_{i},s') + v_{i,\tau }(E,t'^{\perp}_{i},s')\Big) \dt'^{\perp}_{i} \geq - \frac{C_1}{l}.
%\end{equation*}

   If instead $l' > l>2\bar L$ we have that, by Lemma \ref{lemma:1D-optimization}, 
   \begin{align*}
   \frac{1}{l^{d-1}} \int_{Q^{\perp}_{l}(t^{\perp}_{i})} \sum_{\substack{s' \in \partial E_{t'^{\perp}_{i}} 
   		\\ s'\in (l/4, l'-l/4)}} r_{i,\tau }(E, t'^{\perp}_{i},s')  \dt'^{\perp}_{i} &\geq    \frac{1}{l^{d-1}} \int_{Q^{\perp}_{l}(t^{\perp}_{i})} |J_l|e_\tau(h_\tau(J_l)) \dt_i'^\perp-{C_0}\\
   	&= |J_l|e_\tau(h_\tau(J_l))-{C_0},
   \end{align*}
where we used the fact that $|J_l|=l'-l/2>\bar L$. 
 
 Hence
   \begin{equation*}
   \begin{split}
   \frac{1}{l^{d-1}} \int_{Q^{\perp}_{l}(t^{\perp}_{i})} \sum_{\substack{s' \in \partial E_{t'^{\perp}_{i}} 
   		\\ s'\in (l/4, l'-l/4)}} r_{i,\tau }(E,t'^{\perp}_{i},s')  \dt'^{\perp}_{i} \geq \big(|J_l|e_\tau(h_\tau(J_l))-{C_0}\big) \chi_{(0, +\infty)}(|J|-l)-C_1l,
   \end{split}
   \end{equation*}
   
   Thus, since  $\frac34\leq\frac{|Q^{i}_{l}(s')\cap J|}{l}\leq 1$ whenever $s'\in (l/4, l'-l/4)$, ~\eqref{eq:gstr36} follows.

   Let us now turn to the proof of~\eqref{eq:gstr27}.
 Given that $D^j_{\eta}(E,Q_{l}(t^{\perp}_{i},0))\leq \delta$ and $D^{j}_{\eta}(E,Q_{l}(t^\perp_i,l')) \leq\delta$ for some $j\neq i$, by Lemma~\ref{lemma:stimaContributoVariazionePiccola} with $\delta=\varepsilon^d/(16l^d)$ we have that
   \begin{equation}
      \label{eq:gstr37}
      \begin{split}
         r_{i,\tau}(E,t'^\perp_i,s') + v_{i,\tau}(E,t'^\perp_i,s') \geq  0 
      \end{split}
   \end{equation}
   whenever $\min(|s' - l'+l/2| ,|s'- l' -l/2 |) \geq \eta_0$  and $(t'^\perp_i,s')\in Q_{l}(t^\perp_i,l')$
   or  $\min(|s'+l/2| ,|s'-l/2 |)\geq \eta_0$  and $(t'^\perp_i,s')\in Q_{l}(t^\perp_i,0)$. 

   Fix $t'^{\perp}_{i}$. Then
   \begin{equation*}
      \begin{split}
          \sum_{\substack{s' \in \partial E_{t'^{\perp}_{i}} 
               \\ s'\in (-\frac l2, \frac l2 )}}\frac{|Q^{i}_{l}(s')\cap J|}{l} & \Big(r_{i,\tau }(E,t'^{\perp}_{i},s') + v_{i,\tau }(E,t'^{\perp}_{i},s')\Big) \geq
         \\  \geq & \sum_{\substack{s' \in \partial E_{t'^{\perp}_{i}} 
               \\ s'\in (-\frac l2, \frac l2 )\\
            \min(|s'+l/2| ,|s'-l/2 |)\geq \eta_0
            }}\frac{|Q^{i}_{l}(s')\cap J|}{l} \Big(r_{i,\tau }(E,t'^{\perp}_{i},s') + v_{i,\tau }(E,t'^{\perp}_{i},s')\Big)
         \\  + & \sum_{\substack{s' \in \partial E_{t'^{\perp}_{i}} 
               \\ s'\in (-\frac l2, \frac l2 )\\
            \min(|s'+l/2| ,|s'-l/2 |)<  \eta_0
            }}\frac{|Q^{i}_{l}(s')\cap J|}{l} \Big(r_{i,\tau }(E,t'^{\perp}_{i},s') + v_{i,\tau }(E,t'^{\perp}_{i},s')\Big) 
      \end{split}
   \end{equation*}
   Thus by using~\eqref{eq:gstr37}, we have that the first term on the \rhs above is positive. To estimate the last term on the \rhs above we notice that $r_{i,\tau} \geq 0 $ whenever the neighbouring points are closer than $\eta_0$ and otherwise $r_{i,\tau}\geq  -1$.
   Moreover, given that  $\frac{|Q^i_l{(s')}\cap J |}{l} < \frac{\eta_0}{l}$ for $s' \in (-l/2,l/2)\cup (l'-l/2,l'+l/2)$, we have that the last term on the \rhs above can be bounded from below by $-C_1/l$. Finally integrating over $t'^\perp_i$ we obtain that
   \begin{equation*}
      \begin{split}
         \frac{1}{l^{d-1}} \int_{Q^{\perp}_{l}(t^{\perp}_{i})} \sum_{\substack{s' \in \partial E_{t'^{\perp}_{i}} 
               \\ s'\in (-l/2, l/2 )\cup (l'-l/2,l'+l/2)}}\frac{|Q^{i}_{l}(s')\cap J|}{l} \Big(r_{i,\tau }(E,t'^{\perp}_{i},s') + v_{i,\tau }(E,t'^{\perp}_{i},s')\Big) \dt'^{\perp}_{i} \geq -\frac{C_{1}}{l}.
      \end{split}
   \end{equation*}

   By using the above inequality in~\eqref{eq:gstr32} and  the fact  that for every $s'\in (l/2,l'-l/2)$ it holds $\frac{|Q_{l}(s')\cap J |}{l} =1 $, we have that
   \begin{equation*}
      \begin{split}
         \int_{J} \bar{F}_{i,\tau}(E,Q_{l}(t^{\perp}_i,s)) \ds 
         & \geq   \frac{1}{l^{d-1}} \int_{Q^{\perp}_{l}(t^{\perp}_{i})} \sum_{\substack{s' \in \partial E_{t'^{\perp}_{i}} 
               \\ s'\in (l/4, l'-l/4 )}} \Big(r_{i,\tau }(E,t'^{\perp}_{i},s') + v_{i,\tau }(E,t'^{\perp}_{i},s')\Big) \dt'^{\perp}_{i}
         - \frac{C_{1}}{l}
      \end{split}
   \end{equation*}

   To conclude the proof of~\eqref{eq:gstr27}, as for~\eqref{eq:gstr36}, we notice that
   \begin{equation*}
      \begin{split}
         \frac{1}{l^{d-1}} \int_{Q^{\perp}_{l}(t^{\perp}_{i})} \sum_{\substack{s' \in \partial E_{t'^{\perp}_{i}} 
               \\ s'\in (l/4, l'-l/4)}} r_{i,\tau }(E,t'^{\perp}_{i},s')  \dt'^{\perp}_{i} \geq \big(|J_l|e_\tau(h_\tau(J_l))-{C_0}\big) \chi_{(0, +\infty)}(|J| -l),
      \end{split}
   \end{equation*}
   where in the last inequality we have used Lemma~\ref{lemma:1D-optimization} for $E=E_{t'^{\perp}_{i}}$, $J_l=(l/4,l'-l/4)$ with $|J_l|=l'-l/2>l/2>\bar L$. Hence one gets~\eqref{eq:gstr27}.

 %If $|J|\leq l$, then the first sum on the \rhs of~\eqref{eq:gstr32} is performed on an empty set. Therefore, in both~\eqref{eq:gstr27} and~\eqref{eq:gstr36} one has only the boundary terms and can conclude in a similar way.

The proof of~\eqref{eq:gstr28} proceeds using the $L$-periodicity of the contributions.

\end{proof}

\section{Proof of Theorem~\ref{thm:main}}
\subsection{Setting the parameters}\label{Ss:param}
The sets defined in the proof and the main estimates will depend on a set of parameters $l,\delta,\rho,M, \eta$ and $\tau$.  Our aim now is to fix such parameters, making explicit their dependence on each other. We will refer to such choices during the proof of the main theorem.

\begin{enumerate}
	\item We first fix $\eta_0,\tau_0$ as in Lemma~\ref{rmk:stimax1}. 
	
	\item Then we choose $0<\eps\ll1$ such that 
	\begin{equation}
		\label{eq:eps}
		\eps<-e_\tau(h^*_\tau)/4,\qquad2\eps C<\frac12\min\Bigl\{-\frac{e_\tau(h^*_\tau)}{2},1\Bigr\},
	\end{equation}
where $C$ is as in Theorem \ref{T:1d} and we let $\bar L>0$, $\bar \tau_4>0$ as in Theorem \ref{cor:conv} such that moreover $C/\bar L<1$. In particular, $\eps C/\bar L<-e_\tau(h^*_\tau)/4$.
	
	%\item We fix then $\bar \alpha\in(0,1/2]$ and choose  a global volume constraint $\alpha\in[\bar \alpha,1/2]$ on $[0,L)^d$. We let $C$ be as in Lemma~\ref{lemma:1D-optimization} and $\tau_C$ as in Lemma~\ref{rmk:stimax1}.
	
	\item  Let then  $l>0$ s.t.
	\begin{equation}\label{eq:lfix}
		l\geq \max\Big\{2\bar L,\frac{\bar C}{-e_\tau(h^*_\tau)/4}\Big\}\geq\max\Big\{2\bar L,\frac{\bar C}{-e_\tau(h^*_\tau)/2-\eps C/\bar L}\Big\},
	\end{equation}
	where  $\bar L$, $C$  are the constant appearing in Theorem \ref{cor:conv}, $\eps$ is as in \eqref{eq:eps} and $\bar C$ is the constant appearing in \eqref{eq:eqtoBeshown_integral}.

	\item We  find  the parameters  ${\varepsilon}_2 = {\varepsilon}_2(\eta_0,\tau_0)$ and ${\tau}_2 = {\tau}_2(\eta_0, \tau_0)$ as in Proposition~\ref{lemma:stimaContributoVariazionePiccola}.
	
	\item We consider then  $\bar \varepsilon \leq {\varepsilon}_2$, $\tau \leq\min\{{\tau}_2,\bar \tau_4\}$ as in Lemma~\ref{lemma:stimaLinea}. We define   $\delta$ as $\delta  = \frac{\bar \varepsilon^d}{16}$. Moreover,  by choosing $\bar \varepsilon$ sufficiently small we can additionally assume that
	%     where $0<\eps<\tilde\eps$, $\tilde{\eps}$ given in Lemma~\ref{lemma:stimaContributoVariazionePiccola} and Lemma~\ref{lemma:stimaLinea}, and
	\begin{equation}\label{eq:deltafix2}
		D^i_{\eta}(E,Q_l(z))\leq\delta\text{ and }D^j_\eta(E,Q_l(z))\leq\delta,\:i\neq j\quad\Rightarrow\quad\min\{|E\cap Q_l(z)|,|E^c\cap Q_l(z)|\}\leq l^{d-1}.  
	\end{equation}
	The above  follows from Remark~\ref{rmk:lip} (ii). 
	
	\item By Remark~\ref{rmk:lip} (i), we then fix
	\begin{equation}\label{eq:rhofix}
		\rho\sim\delta l. 
	\end{equation}
	in such a way that  for any $\eta$ the following holds
	\begin{equation}\label{eq:rhofix2}
		\forall\,z,z'\text{ s.t. }D_{\eta}(E,Q_l(z))\geq\delta,\:|z-z'|_\infty\leq\rho\quad\Rightarrow\quad D_\eta(E,Q_l(z'))\geq\delta/2.
	\end{equation}

	\item Then we fix $M$ such that
	\begin{equation}
		\label{eq:Mfix}
		\frac{M\rho}{2d}>C_1l +1,
	\end{equation}
	where $C_1$ is the constant appearing in Lemma~\ref{lemma:stimaLinea}.
	
	\item By applying Proposition~\ref{lemma:local_rigidity_alpha}, we obtain  $\bar\eta=\bar{\eta}(M,l)$  and ${\tau}_1 = {\tau}_1(M,l,\delta/2)$.  Thus we fix
	\begin{equation}\label{eq:etafix}
		0<\eta<\bar{\eta},\quad \bar\eta=\bar{\eta}(M,l).
	\end{equation}

	\item Finally, we choose  $\bar\tau>0$ s.t.
	\begin{equation}
		\label{eq:taufix0}
			\bar\tau<\tau_0,  \qquad\text{$\tau_0$ as in Lemma~\ref{rmk:stimax1},}
	\end{equation}
\begin{equation}
	\bar{\tau}<\bar{\tau}_4,\qquad\text{$\bar \tau_4$ as in Theorem~\ref{cor:conv}},
\end{equation}
	\begin{equation}
		\label{eq:taufix1}
		\bar\tau<{\tau}_2, \,{\tau}_2\text{ as in Proposition~\ref{lemma:stimaContributoVariazionePiccola} and Lemma~\ref{lemma:stimaLinea}},
	\end{equation}
	\begin{equation}
		\label{eq:taufix2}
		\bar\tau<{\tau}_1, \text{ ${\tau}_1$ as in Proposition~\ref{lemma:local_rigidity_alpha} depending on $M,l,\delta/2$}.
	\end{equation}
	
\end{enumerate}

 By $[0,L)^d$-periodicity  of $E$ we will denote by $[0,L)^d$  the cube of size $L$ with the usual identification of the boundary.

\subsection{ Decomposition of $[0,L)^d$} \label{subsec:dec}

Now we perform a decomposition of $[0,L)^d$ into different sets according to the $L^1$ closeness of the minimizer $E$ to stripes orthogonal to the different coordinate axes. The construction of this decomposition, in comparison to the one introduced in~\cite{dr_arma,dr_siam,ker}, has to take into account the boundary effects on the slices in direction $e_i$ when close to stripes with boundaries orthogonal to $e_i$.

Let us now consider any $L>l\geq 2\bar L$ as in Point 1. of Section \ref{Ss:param}. We will have that
$[0,L)^d =A_{-1}\cup A_0 \cup (B\setminus B_l)\cup A_{1,l}\cup\ldots \cup A_{d,l}$ where
\begin{itemize}
	\item $A_{i,l}$ with $i > 0$ is made of points $z$ such that there is only one direction $e_i$ such that $E_\tau\cap Q_{l}(z)$ is close to stripes with boundaries orthogonal to~$e_i$. 
	\item $A_{-1}$ is a set of points $z$ such that $E_\tau\cap Q_{l}(z)$ is close both to stripes with boundaries orthogonal to $e_i$ and to stripes with boundaries orthogonal to $e_j$ for some $i\neq j$.  In particular, by Remark~\ref{rmk:lip} (ii) one has that either $|E_\tau\cap Q_l(z)|\ll l^d$ or $|E_{\tau}^c\cap Q_l(z)|\ll l^d$.
	\item $B\setminus B_l$ is a suitable set of points close to the boundaries of the sets $A_{i,l}$ as $i\in\{1,\dots,d\}$. 
	\item $A_{0}$ is a set of points $z$ where none of the above points is true, namely the set $E$ is far from stripes in any direction.
\end{itemize}

The aim is to show that $A_0\cup A_{-1}\cup B\setminus B_l = \emptyset$ and  that there exists only one $A_{i,l}$ with $i >  0$.

Let us first define the sets $A_i$, for $i\in\{-1,0,1,\ldots,d\}$.

We preliminarily define
\begin{equation*}
	\begin{split}
		\tilde{A}_{0}:= \insieme{ z\in [0,L)^d:\ D_{\eta}(E,Q_{l}(z)) \geq \delta }.
	\end{split}
\end{equation*}
Hence, by the choice of $\delta,M$ made in Section~\ref{Ss:param} and by Proposition~\ref{lemma:local_rigidity_alpha}, for every $z\in \tilde{A}_{0}$ one has that $\bar{F}_{\tau}(E,Q_{l}(z)) > M$.

Let us denote by $\tilde{A}_{-1}$ the set
\begin{equation*}
	\begin{split}
		\tilde{A}_{-1}: = \insieme{z\in [0,L)^d: \exists\, i,j \text{ with } i\neq j \text{ \st }\, D^{i}_{\eta} (E,Q_{l}(z))\leq\delta , D^{j}_{\eta} (E,Q_{l}(z)) \leq \delta }.
	\end{split}
\end{equation*}

Since $\delta$ satisfies~\eqref{eq:deltafix2}, when $z\in \tilde{A}_{-1}$, then one has that $\min (|E\cap Q_{l}(z)|, |Q_{l}(z)\setminus E|) \leq  l^{d-1} $.
Thus, using  Lemma~\ref{lemma:stimaQuasiPieno} with $\delta=1/l$, one has that
\begin{equation*}
	\begin{split}
		\bar{F}_{\tau}(E, Q_{l}(z)) \geq  -\frac{d}{l\eta_0}.
	\end{split}
\end{equation*}

The sets $\tilde A_0$ and $\tilde A_{-1}$ can be enlarged while keeping analogous properties.
Indeed, by the choice of $\rho$ made in~\eqref{eq:rhofix},~\eqref{eq:rhofix2} holds, namely for every $z\in \tilde{A}_{0}$ and $|z- z' |_\infty\leq\rho$ one has that $D_{\eta}(E,Q_{l}(z')) > \delta/2$.
Moreover, let now $z'$ such that $|z- z' |_\infty\leq 1$ with $z\in \tilde{A}_{-1}$. It is not difficult to see that if $|Q_{l}(z)\setminus E | \leq l^{d-1}$ then $|Q_{l}(z')\setminus E| \lesssim l^{d-1}$. Thus from Lemma~\ref{lemma:stimaQuasiPieno}, one has that
\begin{equation}
	\label{eq:tildeC}
	\begin{split}
		\bar{F}_{\tau}(E, Q_{l}(z')) \geq -\frac{\tilde C_d}{l\eta_0}.
	\end{split}
\end{equation}

The above observations motivate the following definitions
\begin{align}
	A_{0} &:= \insieme{ z' \in [0,L)^d: \exists\, z \in \tilde{A}_{0}\text{ with }|z-z'|_{\infty}  \leq \rho }\label{a0}\\
	A_{-1} &:= \insieme{ z' \in [0,L)^d: \exists\, z \in \tilde{A}_{-1}\text{ with }|z-z' |_{\infty}  \leq 1 },\label{a1}
\end{align}

By the choice of the parameters and the observations above, for every $z\in A_{0}$ one has that $\bar{F}_{\tau}(E,Q_{l}(z)) > M$ and for every $z\in A_{-1}$, $\bar{F}_{\tau}(E,Q_{l}(z)) \geq-\tilde C_d/(l\eta_0)$.

Let us denote by $A:= A_{0}\cup A_{-1}$. 

The set $[0,L)^d\setminus A$ has the following property: for every $z\in [0,L)^d\setminus A$, there exists $i\in \{ 1,\ldots,d\}$ such that $D^{i}_{\eta}(E,Q_{l}(z)) \leq \delta$ and for every $k\neq i$ one has that $D^{k}_{\eta}(E,Q_{l}(z)) > \delta$.

Given  that $A$ is closed, we consider the connected components $\mathcal C_{1},\ldots,\mathcal C_{n}$ of $[0,L)^d\setminus A$.  The sets $\mathcal C_{i}$ are path-wise connected. 
Moreover, given a connected component $\mathcal C_{j}$ one has that there exists  $i$ such that $D^{i}_{\eta}(E,Q_{l}(z)) \leq \delta$ for every $z\in\mathcal  C_{j}$  and for every $k\neq i$ one has that $D^{k}_{\eta}(E,Q_{l}(z)) > \delta$.  
We will say that $\mathcal C_j$ is oriented in direction $e_i$ if there is a point in $z\in \mathcal C_j$ such that $D^{i}_\eta(E,Q_{l}(z)) \leq \delta$. 
Because of the above being oriented along direction $e_{i}$ is well-defined.

We will denote by $A_{i}$ the union of the connected  components $\mathcal C_{j}$ such that $\mathcal C_{j}$ is oriented along the direction $e_{i}$. 

We observe the following

\begin{enumerate}[(a)]
	\item The sets $A=A_{-1}\cup A_{0}$, $A_{1}$, $A_{2}$, $\ldots, A_d$  form a partition of $[0,L)^d$. 
	\item The sets $A_{-1}, A_{0}$ are closed and $A_{i}$, $i>0$, are open.  
	\item For every $z\in A_{i}$, we have that $D^{i}_{\eta}(E,Q_{l}(z)) \leq \delta$. 
	\item  There exists $\rho$ (independent of $L,\tau$) such that  if $z\in A_{0}$, then $\exists\,z'$ s.t. $Q_{\rho}(z')\subset A_{0}$ and $z \in Q_{\rho}(z')$. If $z\in A_{-1}$ then $\exists\,z'$ s.t. $Q_{1}(z')\subset A_{-1}$ and $z \in Q_{1}(z')$. 
	\item For every $z\in {A}_{i}$ and $z'\in {A}_{j}$ one has that there exists a point $\tilde{z}$ in the segment connecting $z$ to $z'$ lying in ${A}_{0}\cup A_{-1}$. 
\end{enumerate}

Let now $B = \bigcup_{i> 0}A_{i}$, $A=A_0\cup A_{-1}$. 

From conditions $\mathrm{(b)}$ and $\mathrm{(e)}$ above, $B_{t^{\perp}_{i}}$ is a finite union of intervals, each belonging to some $A_{i,t_i^\perp}$, $i\in\{1,\dots,d\}$. Moreover, by $\mathrm{(d)}$, for every point that does not belong to $B_{t^\perp_{i}}$ there is a neighbourhood of fixed positive size that is not included in $B_{t^\perp_i}$. 
Let $\{ I^j_{1},\ldots,I^j_{n(j,t_i^\perp)}\}$ such that $\bigcup_{\ell=1}^{n(j,t_i^\perp)} I^j_{\ell} = A_{j,t_i^\perp}$ with $I^j_\ell \cap I^i_{k} = \emptyset$ whenever $j\neq i$ or $j=i$ and $\ell\neq k$. 
We can further assume that $I^j_{\ell} \leq I^j_{\ell+1}$, namely that for every $s\in I^j_{\ell}$ and $s'\in I^j_{\ell+1}$ it holds $s \leq s'$. 
By construction there exists $J_{k} \subset A_{t^{\perp}_{i}}$ such that $I^j_{\ell}\leq  J_{k} \leq I^j_{\ell+1}$, for every $\ell, j$. We set $\bar n(t_i^\perp)=\sum_{j=1}^dn(j,t_i^\perp)$ to be the number of such disjoint intervals $J_k\subset A_{t_i^\perp}$. Whenever $J_k \cap A_{0,t_i^\perp}\neq\emptyset$,  we have that $|J_k | > \rho$  and whenever $J_{k} \cap A_{-1,t^\perp_i}\neq \emptyset $ then $|J_{k}| > 1$. 

Given $i\in\{1,\dots,d\}$, $\ell\in\{1,\dots,n(i,t_i^\perp)\}$ and $I^i_\ell=(a^i_\ell,b^i_\ell)$, define $I^i_{\ell,l}=(a^i_\ell+l/4,b^i_\ell-l/4)$ whenever $|b^i_\ell-a^i_\ell|> l/2$ and $I^i_{\ell,l}=\emptyset$ otherwise. Set also $n(i,t_i^\perp,l)=n(i,t_i^\perp)-\#\{\ell:\,|I^i_{\ell,l}|<l/2\}$. In particular, for all $\ell\in n(i,t_i^\perp,l)$ one has that $|I^i_\ell|\geq l$. Then define 
\begin{equation}\label{eq:ail}
	A_{i,t_i^\perp,l}=\underset{\ell=1}{\overset{n(i,t_i^\perp,l)}{\bigcup}} I^i_{\ell,l}
\end{equation}
and 
\begin{equation}\label{eq:albl}
	A_{i,l}=\underset{\{t_{i}^\perp\in[0,L)^{d-1}\}}{\bigcup}A_{i,t_i^\perp,l},\qquad B_{l}=\underset{i=1}{\overset{d}{\bigcup}}A_{i,l}.
\end{equation}

Thus we get the partition $[0,L)^d=A_0\cup A_{-1}\cup (B\setminus B_l)\cup A_{1,l}\cup\ldots\cup A_{d,l}$.

\subsection{Proof of Theorem~\ref{thm:main}}\label{Ss:main}

\textbf{Step 1} First we show the following estimate 

\begin{align}
		\frac{1}{L^d} \int_{B_{t^{\perp}_{i}}} \bar{F}_{i,\tau}(E,&Q_{l}(t^{\perp}_{i}+se_i))\ds + \frac1{dL^d} \int_{A_{t^{\perp}_{i}}}\bar{F}_{\tau}(E,Q_{l}(t^{\perp}_{i}+se_i)) \ds  \notag\\
		&\geq \sum_{\ell\in n(i,t_i^\perp,l)}\frac{e_\tau(h_\tau(I^i_{\ell,l}))|I^i_{\ell,l}|}{L^d} - \frac{C_0n(i,t_i^\perp,l)}{L^d}- C(d,\eta_0) \frac{|A_{t^\perp_i}|}{l L^d}+\frac{\#\{A_{0,t_i^\perp}\cap\partial A_{i,t_i^\perp}\}}{L^d}.	\label{eq:toBeShown_slice}
\end{align}

By the  definitions given in Section~\ref{subsec:dec}, one has that
\begin{equation*}
	\begin{split}
		\frac{1}{L^d} \int_{B_{t^{\perp}_{i}}} \bar{F}_{i,\tau}(E,&Q_{l}(t^{\perp}_{i}+se_i)) \ds  + 
		\frac{1}{d L^d} \int_{A_{t^{\perp}_{i}}} \bar{F}_{\tau}(E,Q_{l}(t^{\perp}_{i}+se_i)) \ds 
		\\ & \geq \sum_{j=1}^d\sum_{\ell=1}^{n(j,t_i^\perp)}  \frac{1}{L^d}\int_{I^j_{\ell}} \bar{F}_{i,\tau}(E,Q_{l}(t^{\perp}_{i}+se_i)) \ds
		+ \frac{1}{dL^d}\sum_{\ell=1}^{\bar n(t_i^\perp)} \int_{J_{\ell}} \bar{F}_{\tau}(E,Q_{l}(t^{\perp}_i+se_i)) \ds 
		\\ & \geq \frac{1}{L^d}\sum_{j=1}^d\sum_{\ell=1}^{n(j,t_i^\perp)} \Big( \int_{I^j_{\ell}} \bar{F}_{i,\tau}(E,Q_{l}(t^{\perp}_{i}+se_i)) \ds
		+ \frac{1}{2d} \int_{J_{k(j,\ell)-1}\cup J_{k(j,\ell)}} \bar{F}_{\tau}(E,Q_{l}(t^{\perp}_i+se_i)) \ds\Big),
	\end{split}
\end{equation*}
where in the second inequality we have used the $[0,L)^d$-periodicity and the convention $J_1:=J_{\bar n(t_i^\perp)}$.

Let us first consider $I^i_{\ell} \subset A_{i,t_i^\perp}$.  
By construction, we have that $\partial I^i_{\ell}\subset A_{t^\perp_i}$. 

If $\partial I^i_{\ell}\subset A_{-1,t^\perp_i}$, by using our choice of parameters we can apply~\eqref{eq:gstr27} in Lemma~\ref{lemma:stimaLinea} and obtain
\begin{equation*}
	\begin{split}
		\frac{1}{L^d}\int_{I^i_{\ell}} \bar{F}_{i,\tau}(E,Q_{l}(t^{\perp}_{i}+se_i))\ds  \geq \frac{1}{L^d}\Big[\Big(e_\tau(h_\tau(I^i_{\ell,l}))|I^i_{\ell,l}| - C_0\Big)\chi_{(0, +\infty)}(|I^i_\ell|-l) -\frac{C_1} l\Big].
	\end{split}
\end{equation*}

If $\partial I^i_\ell \cap A_{0, t^\perp_i}\neq \emptyset$, by using our choice of parameters, namely~\eqref{eq:lfix} and~\eqref{eq:taufix1}, we can apply~\eqref{eq:gstr36} in Lemma~\ref{lemma:stimaLinea}, and obtain
\begin{equation*}
	\begin{split}
		\frac{1}{L^d}\int_{I^i_{\ell}} \bar{F}_{i,\tau}(E,Q_{l}(t^{\perp}_{i}+se_i))\ds \geq\frac{1}{L^d}\Big[\Big(e_\tau(h_\tau(I^i_{\ell,l}))|I^i_{\ell,l}| - C_0\Big)\chi_{(0, +\infty)}(|I^i_\ell|-l) -{C_1} l\Big].
	\end{split}
\end{equation*}

On the other hand, if $\partial I^i_\ell \cap A_{0,t^\perp_i}\neq \emptyset$, we have that either $J_{k(i,\ell)}\cap A_{0,t^\perp_i}\neq \emptyset$ or $J_{k(i,\ell)-1}\cap A_{0,t^\perp_i}\neq\emptyset$. Thus
\begin{equation*}
	\begin{split}
		\frac{1}{2dL^d}\int_{J_{k(i,\ell)-1}} \bar{F}_{\tau}(E,Q_{l}(t^{\perp}_{i}+se_i)) \ds & + \frac{1}{2dL^d}\int_{J_{k(i,\ell)}} \bar{F}_{\tau}(E,Q_{l}(t^{\perp}_{i}+se_i)) \ds  \\ &\geq  \frac{M\rho}{2dL^d}  - \frac{|J_{k(i,\ell)-1}\cap A_{-1,t^\perp_i} |\tilde C_d}{2dl\eta_0 L^d} - \frac{|J_{k(i,\ell)}\cap A_{-1,t^\perp_i} |\tilde C_d}{2dl \eta_0L^d},
	\end{split}
\end{equation*}
where $\tilde C_d$ is the  constant in~\eqref{eq:tildeC}.

Since $M$ satisfies~\eqref{eq:Mfix}, in both cases $\partial I^i_{\ell}\subset A_{-1,t^\perp_i}$ or $\partial I^i_{\ell}\cap A_{0,t^\perp_i}\neq \emptyset$, we have that
\begin{align}
		\frac{1}{L^d}\int_{I^i_{\ell}} &\bar{F}_{i,\tau}(E,Q_{l}(t^\perp_i+se_i)) \ds + 
		\frac{1}{2dL^d}\int_{J_{k(i,\ell)-1}}  \bar{F}_{\tau}(E,Q_{l}(t^{\perp}_{i}+se_i))\ds
		+ \frac{1}{2dL^d}\int_{J_{k(i,\ell)}}  \bar{F}_{\tau}(E,Q_{l}(t^{\perp}_{i}+se_i))\ds\notag\\
		&\geq \Bigl(\frac{e_\tau(h_\tau(I^i_{\ell,l}))|I^i_{\ell,l}|}{L^d} -\frac{C_0}{L^d}\Bigr)\chi_{(0,+\infty)}(|I^i_{\ell}|-l)\notag\\
		&+\frac{\#\{A_{0,t_i^\perp}\cap\partial I^i_\ell\}}{L^d}- \frac{|J_{k(i,\ell)-1}\cap A_{-1,t^\perp_i}|\tilde C_d}{2dl\eta_0L^d}
		- \frac{|J_{k(i,\ell)}\cap A_{-1,t^\perp_i}|\tilde C_d}{2dl\eta_0L^d}.\label{eq:a0}
\end{align}

If $I^j_{\ell} \subset A_{j,t^\perp_i}$ with $j \neq i$ from  Lemma~\ref{lemma:stimaLinea} Point (i) it holds
\begin{equation*}
	\begin{split}
		\frac 1{L^d}\int_{I^j_{\ell}} \bar{F}_{i,\tau}(E,Q_{l}(t^{\perp}_{i}+se_i)) \ds\geq  - \frac{C_1}{lL^d}.
	\end{split}
\end{equation*}

In general for every $J_{k}$  we have that 
\begin{equation*}
	\begin{split}
		\frac{1}{dL^d}\int_{J_{k}} \bar{F}_{\tau}(E,Q_{l}(t^{\perp}_{i}+se_i))\, \ds \geq   \frac{|J_{k}\cap A_{0,t^\perp_i} | M}{dL^d} - \frac{\tilde C_d}{dl\eta_0L^d }|J_{k}\cap A_{-1,t^\perp_i}|. 
	\end{split}
\end{equation*}

For $I^j_{\ell}\subset A_{j,t^\perp_i}$ such that $(J_{k(j,\ell)} \cup J_{k(j,\ell)-1})\cap A_{0,t^\perp_i}\neq \emptyset$ with $j\neq i$, we have that 
\begin{equation*}
	\begin{split}
		\frac{1}{L^d}\int_{I^j_{\ell}} \bar{F}_{i,\tau}(E,Q_{l}(t^\perp_i+se_i)) \ds &+ 
		\frac{1}{2dL^d}\int_{J_{k(j,\ell)-1}}  \bar{F}_{\tau}(E,Q_{l}(t^{\perp}_{i}+se_i))\ds
		+ \frac{1}{2dL^d}\int_{J_{k(j,\ell)}}  \bar{F}_{\tau}(E,Q_{l}(t^{\perp}_{i}+se_i))\ds\\
		&\geq -\frac{C_1}{lL^d} + \frac{M\rho}{2dL^d} - \frac{|J_{k(j,\ell)-1}\cap A_{-1,t^\perp_i}|\tilde C_d}{2dl\eta_0L^d}
		- \frac{|J_{k(j,\ell)}\cap A_{-1,t^\perp_i}|\tilde C_d}{2dl\eta_0L^d}.
		\\ &\geq
		\frac{\#\{A_{0,t_i^\perp}\cap \partial I^j_\ell\}}{L^d}- \frac{|J_{k(j,\ell)-1}\cap A_{-1,t^\perp_i}|\tilde C_d}{2dl\eta_0L^d}
		- \frac{|J_{k(j,\ell)}\cap A_{-1,t^\perp_i}|\tilde C_d}{2dl\eta_0L^d}.
	\end{split}
\end{equation*}
where the last inequality is true due to~\eqref{eq:Mfix}.

For $I^j_{\ell}\subset A_{j,t^\perp_i}$ such that $(J_{k(j,\ell)} \cup J_{k(j,\ell)-1})\subset  A_{-1,t^\perp_i}$ with $j\neq i$, we have that 
\begin{align}\label{eq:a-1}
		\frac{1}{L^d}\int_{I^j_{\ell}} \bar{F}_{i,\tau}(E,Q_{l}(t^\perp_i+se_i))& \ds + 
		\frac{1}{2dL^d}\int_{J_{k(j,\ell)-1}}  \bar{F}_{\tau}(E,Q_{l}(t^{\perp}_{i}+se_i))\ds
		+ \frac{1}{2dL^d}\int_{J_{k(j,\ell)}}  \bar{F}_{\tau}(E,Q_{l}(t^{\perp}_{i}+se_i))\ds\notag\\
		&\geq -\frac{C_1}{lL^d}   - \frac{|J_{k(j,\ell)-1}\cap A_{-1,t^\perp_i}|\tilde C_d}{2dl\eta_0L^d}
		- \frac{|J_{k(j,\ell)}\cap A_{-1,t^\perp_i}|\tilde C_d}{2dl\eta_0L^d}.
		\notag\\ &\geq
		- \max\Big(C_1,\frac{\tilde C_d}{\eta_0d}\Big)\bigg(\frac{|J_{k(j,\ell)-1}\cap A_{-1,t^\perp_i}|}{lL^d}
		+ \frac{|J_{k(j,\ell)}\cap A_{-1,t^\perp_i}|}{lL^d}\bigg).
\end{align}
where in the last inequality we have used that $|J_{k(j,\ell)}\cap A_{-1,t^\perp_i}|\geq1, \,|J_{k(j,\ell)-1}\cap A_{-1,t^\perp_i}|\geq1$.

Summing \eqref{eq:a0} and \eqref{eq:a-1} over $j\in\{1,\dots,d\}$, and taking
\begin{equation}\label{eq:ca0}
	C(d,\eta_0)=\max\Big(C_1,\frac{\tilde C_d}{\eta_0d}\Big), 
\end{equation} one obtains~\eqref{eq:toBeShown_slice} as desired.

\textbf{Step 2}

Our aim is to deduce from~\eqref{eq:toBeShown_slice} the following lower bound

\begin{equation}\label{eq:eqtoBeshown_integral}
	\Fcal_{\tau,L}(E)\geq\sum_{i=1}^d\int_{[0,L)^{d-1}}\sum_{\ell\in n(i,t_i^\perp,l)}e_\tau(h_\tau(I^i_{\ell,l}))\frac{|I^i_{\ell,l}|}{L^d}\dt_i^\perp-\frac{\bar C}{lL^d}|B_l^c|+\sum_{i=1}^d\int_{[0,L)^{d-1}}\frac{\#\{A_{0,t_i^\perp}\cap\partial A_{i,t_i^\perp}\}}{L^d}\dt_i^\perp,
\end{equation}
where $\bar C=2C_0+dC(d,\eta_0)$.

Integrating~\eqref{eq:toBeShown_slice} w.r.t. $t_i^\perp\in[0,L)^{d-1}$  one has that

\begin{align}
	\frac{1}{L^d}\int_{A_i}\bar F_{i,\tau}(E,Q_l(z))\dz&+\frac{1}{dL^d}\int_A\bar F_{\tau}(E,Q_l(z))\dz\geq\int_{[0,L)^{d-1}}\sum_{\ell\in n(i,t_i^\perp,l)}e_\tau(h_\tau(I^i_{\ell,l}))\frac{|I^i_{\ell,l}|}{L^d}\dt_i^\perp\notag\\
	&-\frac{C_0}{L^d}\int_{[0,L)^{d-1}}n(i,t_i^\perp,l)\dt_i^\perp-\frac{C(d,\eta_0)}{lL^d}|A|\notag\\
	&+\int_{[0,L)^{d-1}}\frac{\#\{A_{0,t_i^\perp}\cap\partial A_{i,t_i^\perp}\}}{L^d}\dt_i^\perp.
\end{align}
Summing the above over $i\in\{1,\dots,d\}$ and using the lower bound~\eqref{eq:gstr14} together with the definition of the sets in the decomposition  one obtains
\begin{align}
\Fcal_{\tau,L}(E)&\geq\sum_{i=1}^d\int_{[0,L)^{d-1}}\sum_{\ell\in n(i,t_i^\perp,l)}e_\tau(h_\tau(I^i_{\ell,l}))\frac{|I^i_{\ell,l}|}{L^d}\dt_i^\perp-\frac{C_0}{L^d}\sum_{i=1}^d\int_{[0,L)^{d-1}}n(i,t_i^\perp,l)\dt_i^\perp
-\frac{dC(d,\eta_0)}{lL^d}|A|\notag\\
&+\sum_{i=1}^d\int_{[0,L)^{d-1}}\frac{\#\{A_{0,t_i^\perp}\cap\partial A_{i,t_i^\perp}\}}{L^d}\dt_i^\perp\notag\\
&\geq\sum_{i=1}^d\int_{[0,L)^{d-1}}\sum_{\ell\in n(i,t_i^\perp,l)}e_\tau(h_\tau(I^i_{\ell,l}))\frac{|I^i_{\ell,l}|}{L^d}\dt_i^\perp-\frac{\bar C}{lL^d}|B_l^c|+\sum_{i=1}^d\int_{[0,L)^{d-1}}\frac{\#\{A_{0,t_i^\perp}\cap\partial A_{i,t_i^\perp}\}}{L^d}\dt_i^\perp,\notag\\
\end{align}
where in the last inequality we observed that $|B\setminus B_l|\geq \frac{l}{2}\sum_{i=1}^d\int_{[0,L)^{d-1}}n(i,t_i^\perp,l)\dt_i^\perp$.

\textbf{Step 3}

Now let us assume that $E$ is a minimizer, namely $\Fcal_{\tau,L}(E)=e_\tau(h_{\tau,L})$, and that  ${|B_l^c|}\neq0$. 

First of all, we claim that the following estimate holds: for all $i=1,\ldots,d$ and for all $t_i^\perp\in[0,L)^{d-1}$ it holds 
\begin{align}
	\sum_{k=1}^{ n(i,t_i^\perp,l)}e_\tau(h_\tau(I^i_{\ell_k,l}))\frac{|I^i_{\ell_k,l}|}{|A_{i,t_i^\perp,l}|}\geq e_\tau(h_\tau([0,|A_{i,t_i^\perp,l}|]))-\frac{\eps C}{|A_{i,t_i^\perp,l}|},\label{eq:claim1}
\end{align}
where $\eps$ satisfies the conditions in \eqref{eq:eps} and $C$ is the constant appearing in Theorem \ref{cor:conv}. Indeed, by minimality of $e_\tau(h^*_\tau)=e_{\infty,\tau}$, the fact that $\sum_{k=1}^{ n(i,t_i^\perp,l)}|I^i_{\ell_k,l}|=|A_{i,t_i^\perp,l}|$ and by \eqref{eq:epsineq}, one has that
\begin{align*}
		\sum_{k=1}^{ n(i,t_i^\perp,l)}e_\tau(h_\tau(I^i_{\ell_k,l}))\frac{|I^i_{\ell_k,l}|}{|A_{i,t_i^\perp,l}|}\geq e_\tau(h^*_{\tau})\geq e_\tau(h_\tau([0,|A_{i,t_i^\perp,l}|]))-\frac{\eps C}{|A_{i,t_i^\perp,l}|}.
\end{align*}
Using the fact that $\Fcal_{\tau, L}L^d=e(h_{\tau,L}) L^d=e(h_{\tau,L})|B_l|+e(h_{\tau,L})|B_l^c|$, inequality \eqref{eq:eqtoBeshown_integral} rewrites as
\begin{align}
	\sum_{i=1}^d\int_{[0,L)^{d-1}}\Big[e_\tau(h_{\tau,L})|A_{i,t_i^\perp,l}|-\sum_{\ell\in n(i,t_i^\perp,l)}e_\tau(h_\tau(I^i_{\ell,l}))|I^i_{\ell,l}|\Big]\dt_i^\perp&\geq\sum_{i =1}^d\int_{[0,L)^{d-1}}\Big(-e_\tau(h_{\tau,L})-\frac{\bar C}{l}\Big)|A^c_{i,t_i^\perp,l}|\dt_i^\perp\notag\\
&+\sum_{i=1}^d\int_{[0,L)^{d-1}}\#\{A_{0,t_i^\perp}\cap\partial A_{i,t_i^\perp}\}\dt_i^\perp.	
\end{align}
%the lower bound \eqref{eq:claim1} and rearranging the various terms, inequality \eqref{eq:eqtoBeshown_integral} becomes
Using in the above the lower bound \eqref{eq:claim1} one has that
\begin{align}
	\sum_{i=1}^d\int_{[0,L)^{d-1}}\Big[e_\tau(h_{\tau,L})-e_\tau(h_\tau([0,|A_{i,t_i^\perp,l}|]))\Big]|A_{i,t_i^\perp,l}|\dt_i^\perp&\geq-\eps C\sum_{i=1}^d\bigl|\bigl\{t_i^\perp\in[0,L)^{d-1}:\,A_{i,t_i^\perp,l}\neq[0,L)\bigr\}\bigr|\notag\\
	&+\sum_{i =1}^d\int_{[0,L)^{d-1}}\Big(-e_\tau(h_{\tau,L})-\frac{\bar C}{l}\Big)|A^c_{i,t_i^\perp,l}|\dt_i^\perp\notag\\
	&+\sum_{i=1}^d\int_{[0,L)^{d-1}}\#\{A_{0,t_i^\perp}\cap\partial A_{i,t_i^\perp}\}\dt_i^\perp.\label{eq:ineqfinal}
\end{align}
Since both $L$ and $|A_{i,t_i^\perp,l}|$ are greater than $\bar L$, by Theorem \ref{cor:conv} one has that 
\begin{equation}\label{eq:taua}
\Big|e_\tau(h_{\tau,L})-e_\tau(h_\tau([0,|A_{i,t_i^\perp,l}|]))\Big|\leq \frac{\eps C}{|A_{i,t_i^\perp,l}|}.
\end{equation}
Hence, \eqref{eq:ineqfinal} and \eqref{eq:taua} imply that 
\begin{align}
	\eps C\sum_{i=1}^d\bigl|\bigl\{t_i^\perp\in[0,L)^{d-1}:\,A_{i,t_i^\perp,l}\neq[0,L)\bigr\}\bigr|&\geq-\eps C\sum_{i=1}^d\bigl|\bigl\{t_i^\perp\in[0,L)^{d-1}:\,A_{i,t_i^\perp,l}\neq[0,L)\bigr\}\bigr|\notag\\
	&+\sum_{i =1}^d\int_{[0,L)^{d-1}}\Big(-e_\tau(h_{\tau,L})-\frac{\bar C}{l}\Big)|A^c_{i,t_i^\perp,l}|\dt_i^\perp\notag\\
		&+\sum_{i=1}^d\int_{[0,L)^{d-1}}\#\{A_{0,t_i^\perp}\cap\partial A_{i,t_i^\perp}\}\dt_i^\perp.\label{eq:5.25}
\end{align}
By Theorem \ref{cor:conv}, the assumptions \eqref{eq:eps} on $\eps$ and \eqref{eq:lfix} on $l$ one has that
\begin{equation}
-e_\tau(h_{\tau,L})-\frac{\bar C}{l}\geq-e_\tau(h^*_\tau)-\frac{\eps C}{\bar L}-\frac{\bar C}{l}\geq -\frac34 e_{\tau}(h^*_\tau)-\frac{\bar C}{l}\geq-\frac{ e_{\tau}(h^*_\tau)}{2}>0.	\label{eq:5.26}
\end{equation}
Hence from \eqref{eq:5.25} and \eqref{eq:5.26} one obtains
\begin{align}
	2\eps C\sum_{i =1}^d\bigl|\bigl\{t_i^\perp\in[0,L)^{d-1}:\,A_{i,t_i^\perp,l}\neq[0,L)\bigr\}\bigr|&\geq -\frac{e_\tau(h^*_\tau)}{2}\sum_{i =1}^d\int_{[0,L)^{d-1}}|A^c_{i,t_i^\perp,l}|\dt_i^\perp\notag\\
	&+\sum_{i=1}^d\int_{[0,L)^{d-1}}\#\{A_{0,t_i^\perp}\cap\partial A_{i,t_i^\perp}\}\dt_i^\perp.
\end{align}
Now notice that whenever for some $t_i^\perp$ one has that $A_{i,t_i^\perp,l}\neq[0,L)$ or equivalently $A^c_{i,t_i^\perp,l}\neq\emptyset$, then either $A_{0,t_i^\perp}\cap \partial A_{i,t_i^\perp}\neq\emptyset$ and $\#\{A_{0,t_i^\perp}\cap \partial A_{i,t_i^\perp}\}\geq1$ or $A_{-1,t_i^\perp}\neq \emptyset$ and in particular $|A^c_{i,t_i^\perp,l}|\geq|A_{-1,t_i^\perp}|\geq 1$.

 Thus 
\begin{align}
		2\eps C\sum_{i =1}^d\bigl|\bigl\{t_i^\perp\in[0,L)^{d-1}:\,A_{i,t_i^\perp,l}\neq[0,L)\bigr\}\bigr|&\geq\sum_{i =1}^d\Bigl[-\frac{e_\tau(h^*_\tau)}{2}\bigl|\bigl\{t_i^\perp\in[0,L)^{d-1}:\,A_{-1,t_i^\perp,l}\neq\emptyset\bigr\}\bigr|\notag\\
		&+\bigl|\bigl\{t_i^\perp\in[0,L)^{d-1}:\,A_{0,t_i^\perp,l}\neq\emptyset\bigr\}\bigr|\Bigr]\notag\\
		&\geq\min\Bigl\{-\frac{e_\tau(h^*_\tau)}{2},1\Bigr\}\sum_{i =1}^d\bigl|\bigl\{t_i^\perp\in[0,L)^{d-1}:\,A_{i,t_i^\perp,l}\neq[0,L)\bigr\}\bigr|.\label{eq:lastineq}
\end{align}
If $|B_l^c|\neq0$, then $\sum_{i=1}^d\bigl|\bigl\{t_i^\perp\in[0,L)^{d-1}:\,A_{i,t_i^\perp,l}\neq[0,L)\bigr\}\bigr|\neq0$ and if $\eps$ satisfies the conditions in \eqref{eq:eps}, then  \eqref{eq:lastineq} is not satisfied, thus reaching a contradiction. 

 Hence, for any minimizer $E$ it holds $|B_l^c|=0$. In particular, $|A|\leq|B^c_l|=0$ and thus by $\mathrm{(e)}$ there is just one $A_i$, $i>0$ with $|A_i|>0$.

%If instead $h_{\tau,L}=h^*_\tau$, then $L=2kh^*_\tau$ and in the estimate \eqref{eq:eqtoBeshown_integral} one can substitute $e_\tau(h_\tau(I^i_{\ell,l}))$ with the minimal energy density $e_\tau(h^*_\tau)$. Thus, one gets 
%\begin{equation*}
%	e_\tau(h^*_\tau)\geq e_\tau(h^*_\tau)\frac{|B_l|}{L^d}-\frac{\bar C}{lL^d}|B^c_l|.
%\end{equation*}
%In particular, assuming $|B^c_l|\neq0$ one has that
%\[
%e_\tau(h^*_\tau)\geq-\frac{\bar C}{l},
%\]
%which is again contradicted by the assumption \eqref{eq:lfix}.

We now claim that the fact that there is just one $A_i$, $i>0$ with $|A_i|>0$ proves the statement of Theorem~\ref{thm:main}.

Indeed, let us consider 
\begin{align}
	\frac{1}{L^d}\int_{[0,L)^d}\bar F_{\tau}(E,Q_l(z))\dz&=\frac{1}{L^d}\int_{[0,L)^d}\bar F_{i,\tau}(E,Q_l(z))\dz\label{eq:fi}\\
	&+\frac{1}{L^d}\sum_{j\neq i}\int_{[0,L)^d}\bar F_{j,\tau}(E,Q_l(z))\dz\label{eq:fj}
\end{align}

We apply now Lemma~\ref{lemma:stimaLinea} with $j =i$ and slice the cube $[0,L)^d$ in direction $e_i$. 
From~\eqref{eq:gstr21}, one has that~\eqref{eq:fj} is nonnegative and strictly positive unless the set $E$ is a union of stripes with boundaries orthogonal to $e_i$.
On the other hand, from~\eqref{eq:gstr28}, one has the \rhs of~\eqref{eq:fi} is minimized by a periodic union of stripes with boundaries orthogonal to $e_i$ and with period  $2h_{\tau,L}$ and density $1/2$. Thus, periodic stripes of period $2h_{\tau,L}$ and density $1/2$ are optimal.

\printbibliography

\end{document}